 \theoremstyle{definition}
      \newtheorem{defn}{Definition}[section]
      \newtheorem{rmk}[defn]{Remark}
      \newtheorem{rmks}[defn]{Remarks}
      \newtheorem{nota}[defn]{Notation}
      \newtheorem{exa}[defn]{Example}
      \newtheorem{exas}[defn]{Examples}
    \theoremstyle{plain}
      \newtheorem{prop}[defn]{Proposition}
      \newtheorem{lemma}[defn]{Lemma}
      \newtheorem{fact}[defn]{Fact}
      \newtheorem{thm}[defn]{Theorem}
      \newtheorem{cor}[defn]{Corollary}
    \theoremstyle{remark}
  \DeclareMathOperator{\Aut}{Aut}
  \DeclareMathOperator{\im}{Im}
  \DeclareMathOperator{\pr}{pr}
  \DeclareMathOperator{\Stab}{Stab}
  \DeclareMathOperator{\Mod}{Mod}
  \DeclareMathOperator{\Isom}{Isom}
  \DeclareMathOperator{\tr}{tr}
  \newcommand{\id}{\textup{id}}
  \DeclareMathOperator{\Int}{Int}
\newcommand{\C}{\mathbb C}
\renewcommand{\phi}{\varphi}
\newcommand{\PU}{\mathrm{PU}}
\author{Ruben Dashyan}
\title{A construction of representations of $3$-manifold groups into $\PU(2,1)$ through Lefschetz fibrations}
\begin{document}
\maketitle

\begin{abstract}
We obtain infinitely many (non-conjugate) representations of 3-manifold fundamental groups into a lattice in $\Isom(\mathbb H^2_{\mathbb C})$, the holomorphic isometry group of
complex hyperbolic space. The lattice is an orbifold fundamental group  of a branched covering of the projective plane along an arrangement of hyperplanes constructed by Hirzebruch.   The 3-manifolds are related to a Lefschetz fibration of the complex hyperbolic orbifold.
\end{abstract}

\section{Introduction}

Spherical Cauchy-Riemann structures (spherical CR manifolds) are geometric structures which have been studied since the work of Cartan (see \cite{C,BS}). Those are the $(\mathbb S^3,\Isom(\mathbb H^2_{\mathbb C}))$-structures, where $\mathbb S^3$ is seen as the boundary at infinity $\partial_\infty\mathbb H^2_{\mathbb C}$ of the complex hyperbolic plane $\mathbb H^2_{\mathbb C}$. These structures are not part of Thurston's eight geometries. A spherical CR structure on a manifold $M$ is called \emph{uniformizable} if there exists an open subset $\Omega$ of $\mathbb S^3$ on which $\rho(\pi_1(M))$ acts freely and properly discontinuously, so that $M$ is homeomorphic to quotient manifold $\rho(\pi_1(M))\backslash\Omega$. Like the complete $(G,X)$-structures, for any representation $\rho:\pi_1(M)\to\Isom(\mathbb H^2_{\mathbb C})$, there is at most one uniformizable spherical CR structure on the manifold $M$ with holonomy $\rho$. Given a spherical CR structure with holonomy $\rho$ or just a representation $\rho$, a candidate open subset is the discontinuity domain of $\rho(\pi_1(M))$, that is the largest open subset of $\mathbb S^3$ on which $\rho(\pi_1(M))$ acts properly discontinuously. In particular, whenever the discontinuity domain is empty, then the representation $\rho$ cannot be the holonomy representation of a uniformizable spherical CR structure.

Only few examples of $3$-dimensional hyperbolic manifolds carrying such structures and not many more representations of fundamental groups into $\Isom(\mathbb H^2_{\mathbb C})$ are known. For instance, if $M$ is the complement of the figure-eight knot, Falbel has shown that there are essentially two representations of $\pi_1(M)$ into $\Isom(\mathbb H^2_{\mathbb C})$, that the author denotes by $\rho_1$ and $\rho_2$, whose boundary representations $\pi_1(\partial M)\to\Isom(\mathbb H^2_{\mathbb C})$ are unipotent \cite{zbMATH05285648}. The representation $\rho_1$ is not the holonomy of a uniformizable structure since the domain of discontinuity of its image is empty. However it is shown that $\rho_1$ is the holonomy of a branched spherical CR structure on the figure-eight knot. Later, Falbel and Wang have shown that the complement of the figure-eight knot admits a branched spherical CR structure with holonomy $\rho_2$ \cite{zbMATH06362118} and Deraux and Falbel have shown it admits a uniformizable spherical CR structure with holonomy $\rho_2$ \cite{zbMATH06413574}.\bigskip

We introduce a method for constructing infinitely many non-conjugate representations of fundamental groups of closed hyperbolic $3$-dimensional manifolds into a lattice in $\Isom(\mathbb H^2_\mathbb C)$. The domain of discontinuity of those representations happens to be empty, so that they cannot arise as holonomies of uniformizable structures, unlike the example of Deraux-Falbel. Nevertheless, they still may be the holonomies of branched spherical CR structures.

Besides, since these representations take actually their values in a lattice in $\Isom(\mathbb H^2_\mathbb C)$, their existence may also be interpreted from the angle of the Kahn-Markovi\'c theorem.\bigskip


The method relies on the careful examination of the properties of a complex hyperbolic surface, in section \ref{sectionhirzebruch}. It focuses on the particular example of Hirzebruch's surface $Y_1$, which was originally introduced as an example of a complex hyperbolic surface, that is the quotient of the complex hyperbolic plane $\mathbb H^2_\mathbb C$ by a uniform lattice, isomorphic to $\pi_1(Y_1)$ \cite{zbMATH03836223,zbMATH03809719}.

On the one hand, $Y_1$ is a branched covering space of degree $5^5$ of a complex surface, denoted by $\widehat{\mathbb P^2}$, which is the blow-up of the complex projective plane $\mathbb P^2$ at $4$ points (none three of which lie on the same line). The $6$ lines in $\mathbb P^2$ passing through each pair among those $4$ points form the \emph{complete quadrilateral arrangement of lines} (see figure \ref{completequadrilateralarrangement}). 
Besides, the preimage by the blow-up $\widehat{\mathbb P^2}\to\mathbb P^2$ of each of the $4$ points is isomorphic to the complex projective line $\mathbb P^1$. 
The branched covering map $Y_1\to\widehat{\mathbb P^2}$ ramifies exactly over those $10=6+4$ lines in $\widehat{\mathbb P^2}$, with ramification index $5$.

On the other hand, the conics in $\mathbb P^2$ passing through those $4$ points give rise to a rational map $\mathbb P^2\to\mathbb P^1$, called the \emph{pencil of conics}. It lifts to a Lefschetz fibration $\widehat{\mathbb P^2}\to\mathbb P^1$ (see section \ref{sectionpencilofconics}).
$$\xymatrix{
{\widehat{\mathbb P^2}} \ar[rr]^{\textup{blow-up}} \ar[ddr]_{\textup{Lefschetz fibration}} && {\mathbb P^2} \ar[ddl]^{\textup{pencil of conics}} \\
&&\\
&{\mathbb P^1}&
}$$
The Lefschetz fibration $\widehat{\mathbb P^2}\to\mathbb P^1$ admits sections $\widehat{\mathbb P^2}\hookleftarrow\mathbb P^1$. Furthermore, the union of the singular fibers under $\widehat{\mathbb P^2}\to\mathbb P^1$ consists of the proper transforms in $\widehat{\mathbb P^2}$ of the $6$ lines of the complete quadrilateral arrangement in $\mathbb P^2$.
This Lefschetz fibration is related to a Lefschetz fibration $Y_1\to C$ over a complex curve $C$ of genus $6$ which is derived as shown in the following commutative diagram (see proposition~\ref{steinfactorization}).
$$\xymatrix{
&& Y_1 \ar@{->>}[dd]^{\textup{branched covering}} \ar@{->>}@<.5ex>[dll]^{\textup{fibration}} \\
C \ar@{->>}[dd]_{\textup{branched covering}} \ar@{^(->}@<.5ex>[urr]^{\textup{section}} &&\\
&& {\widehat{\mathbb P^2}} \ar@{->>}@<.5ex>[dll]^{\textup{fibration}} \\
{\mathbb P^1} \ar@{^(->}@<.5ex>[urr]^{\textup{section}} &&
}$$
In particular, the branched covering map $Y_1\to\widehat{\mathbb P^2}$ induces, by restriction, a branched covering map from each fiber under $Y_1\to C$ into a fiber of $\widehat{\mathbb P^2}\to\mathbb P^1$. Hence the properties of $Y_1\to C$ may be read from those of $\widehat{\mathbb P^2}\to\mathbb P^1$. The generic fibers of $Y_1\to C$ are smooth curves of genus $76$. There are also $4\times5^2$ singular fibers, each of which consists of $10$ smooth curves intersecting normally at $5^2$ points in total (see proposition \ref{statementsingularfibers}). Denoting by ${Y_1}^u\to{\widehat{\mathbb P^2}}^u$ and $C^u\to(\mathbb P^1)^u$ the corresponding (unbranched) covering maps, one obtains the diagram
$$\xymatrix{
&& {Y_1}^u \ar@{->>}[dd]^{\textup{covering}} \ar@{->>}@<.5ex>[dll]^{\textup{fibration}} \\
C^u \ar@{->>}[dd]_{\textup{covering}} \ar@{^(->}@<.5ex>[urr]^{\textup{section}} &&\\
&& {\widehat{\mathbb P^2}}^u \ar@{->>}@<.5ex>[dll]^{\textup{fibration}} \\
{(\mathbb P^1)^u} \ar@{^(->}@<.5ex>[urr]^{\textup{section}} &&
}$$
where there is neither ramification nor singular fibers anymore.

Section \ref{sectionmcg} is devoted to the careful study of the monodromy of the fibration ${\widehat{\mathbb P^2}}^u\to(\mathbb P^1)^u$ (see corollary \ref{corforgetfulmapmonodromy}) and hence that of ${Y_1}^u\to C^u$ too. Since the fibers under ${\widehat{\mathbb P^2}}^u\to(\mathbb P^1)^u$ are spheres with four punctures, the fibration induces a representation of $\pi_1((\mathbb P^1)^u)$ into the mapping class group $\Mod_{0,4}$ of a sphere, with $4$ marked points. The monodromy representation proves to be an isomorphism and those groups are moreover isomorphic to the principal congruence subgroup $\Gamma(2)$ in $\textup{PSL}_2(\mathbb Z)$ (of index $6$). That fact has motivated the choice of the complex hyperbolic surface $Y_1$, so that the calculations and proofs are simpler than with more complicated mapping class groups. The elements in $\pi_1((\mathbb P^1)^u)$ whose images in $\Mod_{0,4}$ are \emph{pseudo-Anosov} or reducible mapping classes are precisely determined: the classification corresponds to the classification of the elements of $\textup{PSL}_2(\mathbb Z)$ as hyperbolic and parabolic elements ($\Gamma(2)$ contains no elliptic element).\bigskip

Let $F_0$ denote the generic fiber of $Y_1\to C$. For any $\gamma$ in $\pi_1(C^u)$, let $M_\gamma$ denote the $3$-dimensional manifold, obtained as the surface bundle over the circle with fiber $F_0$ and where the homeomorphism is the monodromy of the fibration ${Y_1}^u\to C^u$ along $\gamma$ (see definition \ref{defnsurfacebundle} and section \ref{sectionrep3manifolds}). There is a natural mapping $M_\gamma\to Y_1$  which induces a morphism
$$\rho_\gamma:\pi_1(M_\gamma)\to\pi_1(Y_1).$$
Since $\pi_1(Y_1)$ is isomorphic to a lattice in $\Isom(\mathbb H^2_\mathbb C)$, the morphism $\rho_\gamma$ yields a representation into that lattice and in particular in $\Isom(\mathbb H^2_\mathbb C)$.

It is remarkable that every mapping class in $\Mod_{0,4}$ can be realized as the monodromy along a curve in $(\mathbb P^1)^u$, of the fibration ${\widehat{\mathbb P^2}}^u\to(\mathbb P^1)^u$. Since the generic fiber of $\widehat{\mathbb P^2}\to\mathbb P^1$ is a sphere with $4$ marked points, all the possible surface bundles with the sphere as fiber and with monodromy preserving each of the $4$ marked points are hence obtained in this way.\\
The same construction of surface bundles for the fibration ${\widehat{\mathbb P^2}}^u\to(\mathbb P^1)^u$, instead of ${Y_1}^u\to C^u$ as above, produces representations of the fundamental groups of all those surface bundles. More precisely, the complex hyperbolic structure on $Y_1$ descends to a branched complex hyperbolic structure on $\widehat{\mathbb P^2}$ by the branched covering $Y_1\to\widehat{\mathbb P^2}$. And the fibers of the latter surface bundles are seen as orbifolds with isotropy of order $5$ at each of the four marked points. For $\gamma$ in $\pi_1(C^u)$, the surface bundle $M_\gamma$ is nothing but a branched covering of the orbifold surface bundle whose monodromy is the image of $\gamma$ by $\pi_1(C^u)\to\pi_1((\mathbb P^1)^u)$.

\begin{prop}
For each element $f$ of $\Mod_{0,4}$, consider the surface bundle $M_f$ with monodromy $f$ and with fiber the orbifold with the sphere as underlying space and with isotropy of order $5$ at each of the four marked points. There is a representation of the orbifold fundamental group of $M_f$ into a lattice in $\Isom(\mathbb H^2_\mathbb C)$.
\end{prop}

Section \ref{sectionrep3manifolds} describes the manifold $M_\gamma$ to a small extent, the group $\pi_1(M_\gamma)$ and properties of the representation $\rho_\gamma$ with respect to the element $\gamma$ in $\pi_1(C^u)$.

\begin{prop}
For any $\gamma$ in $\pi_1(C^u)$, the limit set of the image of the representation $\rho_\gamma:\pi_1(M_\gamma)\to\pi_1(Y_1)$ is all of $\partial_\infty\mathbb H^2_{\mathbb C}$.
\end{prop}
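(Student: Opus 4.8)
The plan is to show that the image group $\rho_\gamma(\pi_1(M_\gamma))$ is sufficiently large inside the lattice $\pi_1(Y_1)$ that its limit set, being a closed $\Isom(\mathbb H^2_\mathbb C)$-invariant subset of $\partial_\infty\mathbb H^2_\mathbb C$ under a non-elementary action, must be everything. Since $\partial_\infty\mathbb H^2_\mathbb C\cong\mathbb S^3$ is minimal under no proper closed invariant set unless the group is elementary or stabilizes a proper totally geodesic subspace, it suffices to prove two things: first, that $\rho_\gamma(\pi_1(M_\gamma))$ is non-elementary (equivalently, contains a free subgroup / has no global fixed point on $\overline{\mathbb H^2_\mathbb C}$), and second, that it does not preserve any proper totally geodesic subspace — a complex line $\mathbb H^1_\mathbb C$ or a totally real Lagrangian plane $\mathbb H^2_\mathbb R$. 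Equivalently, one shows the limit set is not contained in a circle (a chain or an $\mathbb R$-circle in $\mathbb S^3$).

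First I would identify the image. The manifold $M_\gamma$ is the mapping torus of the monodromy homeomorphism of the fiber $F_0$ along $\gamma$, so $\pi_1(M_\gamma)$ is an extension $1\to\pi_1(F_0)\to\pi_1(M_\gamma)\to\mathbb Z\to 1$, and the natural map $M_\gamma\to Y_1$ sends the fiber subgroup $\pi_1(F_0)$ to the subgroup of $\pi_1(Y_1)$ carried by a generic fiber of $Y_1\to C$. The key structural input is the Stein factorization diagram (proposition~\ref{steinfactorization}) together with the description of the fibration ${Y_1}^u\to C^u$: the fiber $F_0$ is a genus-$76$ surface, its image subgroup in $\pi_1(Y_1)$ is the fundamental group of that fiber sitting inside the surface group extension coming from $Y_1\to C$, and the $\mathbb Z$-direction is generated by a loop covering $\gamma$. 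So $\rho_\gamma(\pi_1(M_\gamma))$ contains $\pi_1(F_0)\hookrightarrow\pi_1(Y_1)$, the subgroup associated to a fiber.

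Next I would argue that already the fiber subgroup $\pi_1(F_0)\subset\pi_1(Y_1)$ has full limit set. Here is where the geometry of $Y_1$ enters: the fibers of $Y_1\to C$ are complex curves in the complex surface $Y_1$, hence their fundamental groups are not contained in the stabilizer of any complex line or Lagrangian plane — a complex-analytic curve of genus $76$ in $\mathbb H^2_\mathbb C/\pi_1(Y_1)$ cannot lift to a totally geodesic complex line (its Euler characteristic / self-intersection data, computed in proposition~\ref{statementsingularfibers} and the surrounding discussion, is incompatible with that of a totally geodesic subsurface), and a non-constant holomorphic map cannot have image in a totally real subspace at all. Thus the restricted action of $\pi_1(F_0)$ on $\partial_\infty\mathbb H^2_\mathbb C$ is non-elementary and fixes no chain or $\mathbb R$-circle, so by minimality of the limit set of such a group its limit set is all of $\mathbb S^3$. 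Since $\rho_\gamma(\pi_1(M_\gamma))\supseteq\pi_1(F_0)$, its limit set contains, hence equals, $\partial_\infty\mathbb H^2_\mathbb C$.

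The main obstacle is the step showing the fiber subgroup is non-elementary and not contained in the stabilizer of a proper totally geodesic subspace; the totally real case is easy (holomorphicity), but ruling out that the complex curve $F_0$ is itself a totally geodesic complex line — or more precisely that $\pi_1(F_0)$ acts as a Fuchsian group on a complex line — requires a genuine argument. I would handle it by comparing the hyperbolic area of $F_0$ (via Gauss–Bonnet, from genus $76$) with the induced area as a complex submanifold of $\mathbb H^2_\mathbb C$ and invoking the fact that a holomorphic curve is totally geodesic only if it is a complex line, which forces specific self-intersection $0$ inside $Y_1$, contradicting that distinct fibers are disjoint only in the smooth locus while the fibration has $4\times 5^2$ singular fibers — i.e., the fiber class has the wrong numerical invariants. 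Alternatively, and more cleanly, I would note that $\pi_1(F_0)$ is normal in the finite-index-in-$\pi_1(C^u)$-direction and that a lattice in $\Isom(\mathbb H^2_\mathbb C)$ is Zariski-dense, and any normal subgroup of infinite index in a finitely generated subgroup whose quotient is $\mathbb Z$ still has Zariski-dense, hence non-elementary, image by the normality and irreducibility of the ambient lattice — then the limit set of $\rho_\gamma(\pi_1(M_\gamma))$, containing that of a Zariski-dense subgroup, is all of $\partial_\infty\mathbb H^2_\mathbb C$.
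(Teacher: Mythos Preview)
Your argument contains a genuine gap at its very first reduction. You claim that for a subgroup of $\Isom(\mathbb H^2_{\mathbb C})$, being non-elementary and not preserving any proper totally geodesic subspace forces the limit set to be all of $\partial_\infty\mathbb H^2_{\mathbb C}$. This is false: any Zariski-dense Schottky subgroup (and more generally any Zariski-dense convex-cocompact group of infinite covolume) is non-elementary, preserves no complex line and no Lagrangian plane, yet has limit set a proper Cantor-like subset of $\mathbb S^3$. So the whole program of ruling out invariant chains and $\mathbb R$-circles, and the attendant discussion of whether $F_0$ could be a totally geodesic complex line, does not address the actual question. Your alternative at the end has the same defect: you conclude ``Zariski-dense, hence non-elementary'' and then jump to full limit set, but Zariski-density does not imply full limit set either.

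What the paper actually uses is much more elementary and sidesteps all of this. The point is that the image $N$ of $\pi_1(F_0)$ in $\pi_1(Y_1)$ is a \emph{normal} subgroup of the full lattice $\pi_1(Y_1)$: indeed, the fiber subgroup is normal in $\pi_1({Y_1}^u)$ (from the fibration ${Y_1}^u\to C^u$), and $\pi_1({Y_1}^u)\to\pi_1(Y_1)$ is surjective. Normality in the \emph{lattice} is the crucial ingredient you are missing. It implies that the closed set $\Lambda(N)$ is invariant under the action of all of $\pi_1(Y_1)$ on $\partial_\infty\mathbb H^2_{\mathbb C}$; since $\pi_1(Y_1)$ is a uniform lattice, its action on the boundary is minimal, so $\Lambda(N)$ is either empty or all of $\partial_\infty\mathbb H^2_{\mathbb C}$. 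Emptiness would force $N$ to be finite and hence $\pi_1(C)$ to have finite index in $\pi_1(Y_1)$, which is impossible. Thus $\Lambda(N)=\partial_\infty\mathbb H^2_{\mathbb C}$, and since $\rho_\gamma(\pi_1(M_\gamma))\supseteq N$, you are done. Your phrase ``normal in the finite-index-in-$\pi_1(C^u)$-direction'' suggests you only saw normality inside $\pi_1(M_\gamma)$ or $\pi_1({Y_1}^u)$; the step you need is to push that normality forward to $\pi_1(Y_1)$ itself.
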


\begin{prop}
For any element $\gamma$ in $\pi_1(C^u)$, if its image in $\pi_1(C)$ is not trivial, then\begin{enumerate}
  \item the kernel of $\rho_\gamma$ is equal to the kernel of $\pi_1(F_0)\to\pi_1(Y_1)$,
  \item the monodromy of the fibration ${Y_1}^u\to C^u$ along $\gamma$ is pseudo-Anosov,
  \item the kernel is not of finite type.
\end{enumerate}
\end{prop}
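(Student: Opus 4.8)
The plan is to prove (1) first, deduce from it and section~\ref{sectionmcg} that $M_\gamma$ is hyperbolic in (2), and then combine (1) and (2) to get (3). For (1), I would chase fundamental groups through $M_\gamma\to Y_1\to C$: by construction $M_\gamma$ is the mapping torus of the monodromy $\phi_\gamma$ of ${Y_1}^u\to C^u$ along $\gamma$, so there is a short exact sequence $1\to\pi_1(F_0)\to\pi_1(M_\gamma)\to\Z\to 1$, and the natural map $M_\gamma\to Y_1$ carries $F_0$ onto a fibre of $Y_1\to C$ while the induced map $M_\gamma\to C$ factors as $M_\gamma\to S^1\xrightarrow{\gamma}C^u\hookrightarrow C$. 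Hence the composite $\pi_1(M_\gamma)\xrightarrow{\rho_\gamma}\pi_1(Y_1)\to\pi_1(C)$ factors as $\pi_1(M_\gamma)\twoheadrightarrow\Z\xrightarrow{\gamma_*}\pi_1(C)$, where $\gamma_*(1)$ is the image of $\gamma$ in $\pi_1(C)$. Since $\pi_1(C)$ is a surface group it is torsion free, so the hypothesis $\gamma_*(1)\neq 1$ makes $\gamma_*$ injective, and therefore $\ker(\pi_1(M_\gamma)\to\pi_1(C))=\ker(\pi_1(M_\gamma)\to\Z)$, the image of $\pi_1(F_0)$. In particular $\ker\rho_\gamma\subseteq\pi_1(F_0)$; intersecting with $\pi_1(F_0)$, on which $\rho_\gamma$ restricts to the fibre-inclusion homomorphism, yields $\ker\rho_\gamma=\ker\bigl(\pi_1(F_0)\to\pi_1(Y_1)\bigr)$, which is (1).

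For (2), I would transport the monodromy to the base of the pencil of conics. Let $\bar\gamma\in\pi_1((\mathbb P^1)^u)$ be the image of $\gamma$ under the injection induced by the covering $C^u\to(\mathbb P^1)^u$. Restricting the covering $Y_1\to\widehat{\mathbb P^2}$ to fibres intertwines $\phi_\gamma$ with the monodromy $\bar\phi\in\Mod_{0,4}$ of ${\widehat{\mathbb P^2}}^u\to(\mathbb P^1)^u$ along $\bar\gamma$; that is, it exhibits $\phi_\gamma$ as a lift of $\bar\phi$ through a covering of surfaces branched with index $5$ over the four marked points. By corollary~\ref{corforgetfulmapmonodromy} and the dictionary there, $\bar\phi$ is pseudo-Anosov as soon as $\bar\gamma$ is neither trivial nor peripheral in $\pi_1((\mathbb P^1)^u)\cong\Gamma(2)$ --- this group being free, containing no elliptic element, and with its three punctures accounting for all parabolics up to conjugacy. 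The hypothesis forces exactly this. If $\bar\gamma=1$ then $\gamma=1$ already in $\pi_1(C^u)$, hence in $\pi_1(C)$. If $\bar\gamma$ lay in a conjugate $w\langle\delta_p\rangle w^{-1}$ of the cyclic subgroup around a puncture $p$, then, $\pi_1(C^u)$ being of finite index, $\bar\gamma$ would lie in $\pi_1(C^u)\cap w\langle\delta_p\rangle w^{-1}=\langle w\,\delta_p^{e}\,w^{-1}\rangle$, the peripheral subgroup of $\pi_1(C^u)$ attached to some puncture of $C^u$ lying over $p$ (with $e$ the ramification index); this subgroup dies when the puncture is filled in to form $C$, so again $\gamma=1$ in $\pi_1(C)$. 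Thus $\bar\phi$ is pseudo-Anosov, and then so is the lift $\phi_\gamma$: its invariant transverse measured foliations are the pull-backs of those of $\bar\phi$ and have the same dilatation, a $k$-prong remaining a $k$-prong off the branch locus while each one-prong at a marked point becomes a five-prong, so no forbidden singularity is created. (This holds on the closed fibre $F_0$, hence a fortiori on the punctured fibre of ${Y_1}^u\to C^u$.)

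For (3), the idea is that a finitely generated $\ker\rho_\gamma$ would have to be a closed surface group, contradicting the fact that it is free. By (2) and Thurston's hyperbolisation theorem for mapping tori of pseudo-Anosov homeomorphisms, $M_\gamma$ is a closed hyperbolic $3$-manifold, so $\pi_1(M_\gamma)$ is word hyperbolic and not virtually cyclic. Put $N=\ker\rho_\gamma=\ker(\pi_1(F_0)\to\pi_1(Y_1))$, a normal subgroup of $\pi_1(M_\gamma)$ contained in $\pi_1(F_0)$, with $\pi_1(F_0)/N\cong\ker(\pi_1(Y_1)\to\pi_1(C))$. This quotient is infinite: a finite one would be a finite normal subgroup of the torsion-free group $\pi_1(Y_1)$, hence trivial, forcing $\pi_1(Y_1)\cong\pi_1(C)$, impossible as their cohomological dimensions are $4$ and $2$. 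Hence $N$ has infinite index in the closed surface group $\pi_1(F_0)$, so $N$ is free; and $N\neq 1$ because $Y_1\to C$ has singular fibres (proposition~\ref{statementsingularfibers}), a vanishing cycle being an essential simple closed curve on $F_0$ that bounds a disc in $Y_1$ and hence a nontrivial element of $N$. If $N$ were finitely generated it would be a finitely generated, infinite, infinite-index normal subgroup of the fundamental group of a closed hyperbolic $3$-manifold; by the covering theorem (equivalently, by Stallings' fibration theorem together with hyperbolisation) it would then be commensurable with a fibre subgroup of a fibration over the circle of some finite cover of $M_\gamma$, hence virtually a closed surface group and so of cohomological dimension $2$ --- contradicting that $N$ is free. (Algebraically, a finitely generated free, hence $\mathrm{FP}_\infty$, normal subgroup of the Poincar\'e-duality group $\pi_1(M_\gamma)$ of dimension $3$ would be infinite cyclic by Bieri--Eckmann duality, which a word-hyperbolic group that is not virtually cyclic does not contain.) Therefore $\ker\rho_\gamma$ is not finitely generated, a fortiori not of finite type; incidentally the infiniteness of $\pi_1(F_0)/N$ records that $\rho_\gamma$ has infinite image.

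The main obstacle is the middle assertion, and within it the step showing that a loop of $C^u$ that is nontrivial in $\pi_1(C)$ cannot represent a peripheral class of $\pi_1((\mathbb P^1)^u)$ --- this is where the interplay between the covering $C^u\to(\mathbb P^1)^u$, its ramification indices, and the filling in of punctures must be handled carefully --- together with the singularity count showing that the branched lift of a pseudo-Anosov map is again pseudo-Anosov. Given the material of sections~\ref{sectionhirzebruch}--\ref{sectionmcg} and standard facts about Lefschetz fibrations, cohomological dimension, and $3$-manifold groups, assertions (1) and (3) are comparatively formal.
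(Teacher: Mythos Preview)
Your argument is correct. Parts (1) and (2) follow essentially the same line as the paper: for (1) the paper also chases the composition $\pi_1(M_\gamma)\to\pi_1(Y_1)\to\pi_1(C)$, using that $\pi_1(C)\to\pi_1(Y_1)$ is injective; for (2) the paper simply invokes proposition~\ref{kernelCuC} (whose content --- that every parabolic of $\pi_1(C^u)\subset\Gamma(2)$ dies in $\pi_1(C)$ --- you re-derive by hand via peripheral subgroups) to conclude that $\gamma$ is hyperbolic, and then declares the monodromy pseudo-Anosov. Your treatment is in fact more complete: you make explicit the passage from a pseudo-Anosov class in $\Mod_{0,4}$ to a pseudo-Anosov class on the genus-$76$ fibre $F_0$ via the branched cover (the prong count), a step the paper leaves implicit.

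Part (3) is where you genuinely diverge. The paper stays inside the surface group $\pi_1(F_0)$: since $\ker\rho_\gamma=\ker(\pi_1(F_0)\to\pi_1(Y_1))$ is normal in $\pi_1(F_0)$ and invariant under the pseudo-Anosov monodromy, Otal's lemma \cite[Lemma~6.2.5]{zbMATH00921468} says that if it were finitely generated it would have finite index; but its image has full limit set $\partial_\infty\mathbb H^2_{\mathbb C}$, hence is infinite, so the kernel has infinite index. Your route instead passes through $3$-manifold topology: $N$ is free (infinite index in a surface group), nontrivial (vanishing cycles), and if finitely generated would --- by tameness and the covering theorem applied to the normal cover of the closed hyperbolic $M_\gamma$ --- be a virtual fibre, hence a closed surface group, contradiction. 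Both arguments are valid; the paper's is lighter (a lemma about pseudo-Anosov--invariant subgroups of surface groups, proved via laminations), while yours ties the result to the virtual-fibering picture at the cost of invoking deeper machinery. One caveat: your parenthetical Bieri--Eckmann alternative needs the quotient $\pi_1(M_\gamma)/N\cong\mathrm{im}\,\rho_\gamma$ to be of type $\mathrm{FP}$ before concluding that $N$ is $\mathrm{PD}^1$, and that is not obvious here; the $3$-manifold argument you give first is the one that actually closes.
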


Observe that, if the monodromy is pseudo-Anosov, then the surface bundle $M_\gamma$ is a hyperbolic manifold, according to Thurston's hyperbolization theorem for surface bundles over the circle. In that case, the representation $\rho_\gamma:\pi_1(M_\gamma)\to\pi_1(Y_1)$ hence provides a representation of the fundamental group $\pi_1(M_\gamma)$ of the hyperbolic manifold $M_\gamma$, into a complex hyperbolic lattice.\bigskip

Finally, the family of representations constructed in this way is the source of infinitely many conjugacy classes of representations of hyperbolic manifolds of three dimensions into a complex hyperbolic lattice.

\begin{thm}
For any two $\gamma_1$ and $\gamma_2$ in $\pi_1(C^u)$, if the image in $\pi_1(C)$ of $\gamma_1$ is not conjugate to that of $\gamma_2$ or its inverse, then either the groups $\pi_1(M_{\gamma_1})$ and $\pi_1(M_{\gamma_2})$ are not isomorphic or, if such an isomorphism $\Phi:\pi_1(M_{\gamma_1})\to\pi_1(M_{\gamma_2})$ exists, then the representations $\rho_{\gamma_1}$ and $\rho_{\gamma_2}\circ\Phi$ are not conjugate.
\end{thm}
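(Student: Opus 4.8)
The plan is to argue by contradiction. Suppose the images of $\gamma_1$ and $\gamma_2$ in $\pi_1(C)$ are not conjugate up to inversion, and yet there is an isomorphism $\Phi\colon\pi_1(M_{\gamma_1})\to\pi_1(M_{\gamma_2})$ together with an element $c$ conjugating $\rho_{\gamma_1}$ to $\rho_{\gamma_2}\circ\Phi$, so that $\rho_{\gamma_2}\circ\Phi=c\,\rho_{\gamma_1}\,c^{-1}$. Write $\Gamma_i:=\rho_{\gamma_i}(\pi_1(M_{\gamma_i}))$ for the image of $\rho_{\gamma_i}$ in $\pi_1(Y_1)$. Taking images of this relation and using that $\Phi$ is onto yields $c\,\Gamma_1\,c^{-1}=\Gamma_2$. (It is worth noting, although not strictly needed, that $\Phi$ must also carry $\ker\rho_{\gamma_1}$ onto $\ker\rho_{\gamma_2}$, which by the proposition computing these kernels are in both cases $\ker(\pi_1(F_0)\to\pi_1(Y_1))$.) Everything then reduces to understanding how an element can conjugate $\Gamma_1$ onto $\Gamma_2$ inside $\pi_1(Y_1)$.

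The organising object is the subgroup $N:=\operatorname{im}\bigl(\pi_1(F_0)\to\pi_1(Y_1)\bigr)$, the image of the fundamental group of a fibre of $Y_1\to C$. Since $Y_1\to C$ is a Lefschetz fibration admitting a section, the standard description of the fundamental group of a Lefschetz fibration identifies $N$ with $\ker\bigl(\pi_1(Y_1)\to\pi_1(C)\bigr)$; in particular $N$ is normal in $\pi_1(Y_1)$ and $\pi_1(Y_1)/N\cong\pi_1(C)$, the isomorphism being induced by $Y_1\to C$ and split by the section. The map $\rho_{\gamma_i}$ sends the fibre subgroup $\pi_1(F_0)\subset\pi_1(M_{\gamma_i})$ onto $N$, so $N\subset\Gamma_i$; and since the natural map $M_{\gamma_i}\to Y_1\to C$ sends the base circle of the surface bundle $M_{\gamma_i}$ to the loop $\gamma_i$, the quotient $\Gamma_i/N$ is the cyclic subgroup $\langle\overline{\gamma_i}\rangle$ of $\pi_1(Y_1)/N\cong\pi_1(C)$ generated by the image $\overline{\gamma_i}$ of $\gamma_i$ in $\pi_1(C)$. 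If $\overline{\gamma_i}\neq 1$ this cyclic group is infinite, as $\pi_1(C)$ is torsion-free; and if some $\overline{\gamma_i}$ is trivial the conclusion is immediate, since then $\Gamma_i=N$ and, $N$ being normal, $c\,\Gamma_1\,c^{-1}=\Gamma_2$ forces the other $\overline{\gamma_j}$ to be trivial as well, contradicting the hypothesis. So I may assume $\overline{\gamma_1},\overline{\gamma_2}\neq 1$.

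Suppose first that conjugacy of representations is understood within the lattice, i.e.\ $c\in\pi_1(Y_1)$. Then $c$ normalises $N$ automatically, hence induces an element $\overline c$ of $\pi_1(Y_1)/N\cong\pi_1(C)$; reducing $c\,\Gamma_1\,c^{-1}=\Gamma_2$ modulo $N$ gives $\overline c\,\langle\overline{\gamma_1}\rangle\,\overline c^{-1}=\langle\overline{\gamma_2}\rangle$ in $\pi_1(C)$. An infinite cyclic group has exactly two generators, a fixed one and its inverse, so $\overline c\,\overline{\gamma_1}\,\overline c^{-1}$ (a generator of the left-hand side, hence of the right-hand side) equals $\overline{\gamma_2}$ or $\overline{\gamma_2}^{-1}$: the images of $\gamma_1$ and $\gamma_2$ in $\pi_1(C)$ are conjugate there up to inversion, the desired contradiction. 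To handle a conjugating element $c$ lying in $\Isom(\mathbb H^2_{\mathbb C})$ rather than in the lattice, the only extra point needed is that $c$ still normalises $N$. This holds because $N$ is a characteristic subgroup of $\Gamma_i$: one has $\Gamma_i\cong N\rtimes\mathbb Z$, and $N$ has finite abelianisation (the analysis of the fibres of $Y_1\to C$ as branched covers of the sphere with four cone points of order $5$, together with the identification of $\ker\rho_{\gamma_i}$, shows $N$ to be the orbifold fundamental group of that orbifold, of abelianisation $(\mathbb Z/5)^3$); hence $H_1(\Gamma_i)$ has rank one, so $\Gamma_i$ admits a unique normal subgroup with infinite cyclic quotient, namely $N$. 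Consequently any isomorphism $\Gamma_1\to\Gamma_2$, in particular conjugation by $c$, carries $N$ onto $N$, and one concludes exactly as above.

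The main difficulty is the middle step: pinning down $N=\operatorname{im}(\pi_1(F_0)\to\pi_1(Y_1))$ precisely enough — as the kernel of $\pi_1(Y_1)\to\pi_1(C)$, so that $\pi_1(Y_1)/N\cong\pi_1(C)$ and $\Gamma_i/N=\langle\overline{\gamma_i}\rangle$ — which rests on the Lefschetz fibration structure of $Y_1\to C$ and the existence of its section; and, for the version allowing $c\in\Isom(\mathbb H^2_{\mathbb C})$, the homological input that $N$ has finite abelianisation, so that $b_1(\Gamma_i)=1$ and $N$ is characteristic in $\Gamma_i$. Once those two facts are in place the remaining group theory is elementary.
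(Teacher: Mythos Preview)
For the statement as the paper proves it --- conjugacy taken inside the lattice $\pi_1(Y_1)$ --- your argument is correct and is essentially the paper's own. Both arguments pass to the quotient $\pi_1(Y_1)/N\cong\pi_1(C)$, where $N$ is the image of $\pi_1(F_0)$ (equivalently the kernel of $\pi_1(Y_1)\to\pi_1(C)$), observe that conjugation by $c\in\pi_1(Y_1)$ descends, and deduce that the cyclic subgroups $\langle\overline{\gamma_i}\rangle$ are conjugate in $\pi_1(C)$, hence $\overline{\gamma_1}$ is conjugate to $\overline{\gamma_2}^{\pm1}$. The paper writes the conjugator as $\rho(\varphi_0\psi_0)$ with $\varphi_0$ in the fibre group and $\psi_0\in\pi_1(C^u)$, then absorbs $\psi_0$ into $\gamma_1$; you instead reduce modulo $N$ directly. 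These are two phrasings of the same computation.

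Your treatment of the case $c\in\Isom(\mathbb H^2_\mathbb C)$ goes beyond the paper, and the strategy (show $N$ is characteristic in $\Gamma_i$ via $b_1(\Gamma_i)=1$) is the right one. But the justification you give for $H_1(N)$ being finite is not correct. You identify $N$ with the orbifold fundamental group $\Pi=\langle a_1,\dots,a_4\mid a_i^5=1,\ a_1a_2a_3a_4=1\rangle$ of the sphere with four order-$5$ cone points; however $\Pi\to G_1$ is \emph{not} injective. Indeed $\pi_1(F_0)\to\pi_1(Y_1)$ has nontrivial kernel (the vanishing cycles of the Lefschetz fibration $Y_1\to C$), and since $\pi_1(Y_1)\hookrightarrow G_1$ and the map $\pi_1(F_0)\to G_1$ factors through $\pi_1(F_0)\hookrightarrow\Pi\to G_1$, the kernel of $\Pi\to G_1$ must meet $\pi_1(F_0)$ nontrivially. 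So the image $K$ of $\Pi$ in $G_1$ is a proper quotient of $\Pi$, and $N=[G_1,G_1]\cap K$ is an index-$5^3$ subgroup of $K$, not $\Pi$ itself. Finite abelianisation is not inherited by finite-index subgroups (your own $\pi_1(F_0)\subset\Pi$ is a counterexample), so the claim $H_1(N)$ finite needs a separate argument --- for instance by determining $b_1(Y_1)$ and comparing with $b_1(C)=12$, or by analysing the span of the vanishing cycles in $H_1(F_0;\mathbb Q)$.
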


Unfortunately, even though the present construction yields infinitely many non-conjugate representations of fundamental groups of hyperbolic $3$-manifolds into $\textup{PU}(2,1)$, absolutely none of them is uniformizable. The reason comes from the nature of the construction which relies on a Lefschetz fibration of a complex hyperbolic surface. In order to hope for exhibiting such uniformizable representations, one should probably relax some parts of this construction, either by searching for other maps from three manifolds into the complex surface, or by taking as a starting point, instead of a Lefschetz fibration of a complex hyperbolic surface, a Lefschetz fibration of a complex surface which still admits a representation of its fundamental group into $\textup{PU}(2,1)$ (see below).

Furthermore, the method seems reproducible with other complex hyperbolic lattices. Indeed, let $Q_n$ be the quotient, in the sense of geometric invariant theory, of $(\mathbb P^1)^n$ by the diagonal action of $\Aut(\mathbb P^1)$. In other words, $Q_n$ is the set of configurations of $n$ marked points in the projective line. Let also $Q_n^*$ denote the usual quotient, by the diagonal action of $\Aut(\mathbb P^1)$, of the subset of $(\mathbb P^1)^n$ formed by all the $n$-tuples of pairwise distinct points. The fibrations $\widehat{\mathbb P^2}\to\mathbb P^1$ and ${\widehat{\mathbb P^2}}^u\to(\mathbb P^1)^u$ may actually be interpreted as the forgetful mappings $Q_5\to Q_4$ and $Q_5^*\to Q_4^*$, respectively, which forget the last point of the configuration (see proposition \ref{crossratioisomorphism}). In passing, this observation explains morally the particular role of the fibration $\widehat{\mathbb P^2}\to\mathbb P^1$.

It is remarkable that these spaces $Q_n$ appear at the heart of the construction by Deligne--Mostow of complex hyperbolic lattices, as described below. The forgetful mappings $Q_n\to Q_p$ for $p<n$ (which forget, say, the last $n-p$ points of a configuration) provide natural fibrations for the Deligne--Mostow lattice quotients as well. Therefore, one might expect that the Deligne-Mostow lattices have the tendency to contain surface bundles.\bigskip

There exist several constructions of complex hyperbolic lattices. The story has started with Picard at the end of the 19\textsuperscript{th} century and is still being written nowadays. Old and modern examples and construction methods cohabit. A glance at the survey of Parker \cite{zbMATH05777850} is sufficient to realize how rich this field is and the number of mathematicians it has attracted. Yet the relations between the variety of approaches are not clearly established. Some relatively recent points of view are worth one's attention \cite{zbMATH06549397} and \cite{zbMATH06149478}.

Originally, the Deligne--Mostow lattices were discovered by considering \emph{hypergeometric functions}. Let $\mu=(\mu_1,\dots,\mu_n)$ be an $n$-tuple of real numbers in the interval $(0,1)$ satisfying
$$\sum_{k=1}^n\mu_k=2$$
and, for any distinct integers $a$ and $b$ of $\{1,\dots,n\}$, define
$$F_{ab}(z_1,\dots,z_n)=\int_{z_a}^{z_b}\prod_{k=1}^n(z-z_k)^{-\mu_k}\textup dz$$
where $z_1,\dots,z_n$ are elements in $\hat{\mathbb C}$ and the path of integration lies in $\hat{\mathbb C}-\{z_1,\dots,z_n\}$, apart from its end points. The functions $F_{ab}$ are multi-valued functions, well defined if no two of the variables $z_k$ coincide. Moreover, they span a vector space of dimension $n-2$ and there exists a function $h$ in the variables $z_k$ such that
$$F_{ab}(\alpha(z_1),\dots,\alpha(z_n))=h(z_1,\dots,z_n)F_{ab}(z_1,\dots,z_n)$$
for any $\alpha\in\Aut(\mathbb P^1)$ and for any distinct indices $a$ and $b$.
Therefore, one obtains a multi-valued mapping
$$F:\left\{\begin{array}{ccc}
Q_n^*&\longrightarrow&\mathbb P^{n-3}\\
Z=(z_1,\dots,z_n)&\longmapsto&[F_{a_1b_1}(Z):\cdots:F_{a_{n-2}b_{n-2}}(Z)]
\end{array}\right.$$
where $F_{a_1b_1},\dots,F_{a_{n-2}b_{n-2}}$ are linearily independent. Hence, $F$ induces a \emph {monodromy} representation from a fundamental group of $Q_n^*$ onto a subgroup $\Gamma_\mu$ of $\Aut(\mathbb P^{n-3})=\textup{PGL}_{n-2}(\mathbb C)$. Furthermore, one may show that the monodromy preserves a Hermitian form of signature $(n-3,1)$, so that $\Gamma_\mu$ is a subgroup of $\textup{PU}(n-3,1)\simeq\Isom(\mathbb H^{n-3}_\mathbb C)$. Finally, if the $n$-tuple $\mu$ satisfies a integral condition, called $\Sigma$INT, then Deligne and Mostow show that the monodromy takes its values in a lattice \cite[Theorem 3.2]{zbMATH05777850}.\bigskip

Note that the monodromy representation into $\textup{PU}(n-3,1)$ exists, whether or not its image is a lattice. And the forgetful mappings provide fibrations. These monodromy representations may be a starting point to exhibit uniformizable representations. Therefore, even though the present construction focuses on the particular complex hyperbolic surface $Y_1$ and on the corresponding lattice in $\Isom(\mathbb H^2_\mathbb C)$, the method should generalize to a much larger class of surfaces bundles, so as to obtain representations of their fundamental groups into $\Isom(\mathbb H^2_\mathbb C)$ and possibly spherical CR structures.

\subsection*{Acknowledgements}

This work has been conducted as part of a doctoral research (see \cite{dashyan:tel-01684245}) under the supervision of Elisha Falbel and Maxime Wolff to whom I am deeply grateful for their commitment and their friendship. The idea of using Lefschetz fibrations was
suggested by Misha Kapovich. I also thank Pascal Dingoyan, Laurent Koelblen and Fr\'ed\'eric Le Roux for helpful discussions.

\section{A complex hyperbolic surface}\label{sectionhirzebruch}

This section presents a particular construction of smooth complex algebraic surfaces, studied by Hirzebruch \cite{zbMATH03836223} (see also  \cite[section 1.4, example 6]{zbMATH00046562} and \cite{zbMATH06549397}). These algebraic varieties are obtained by resolving the singularities of some branched covering spaces of the complex projective plane. Under some conditions (see theorem \ref{miyaokayau}), the surfaces happen to be quotients of the complex hyperbolic plane $\mathbb H^2_\mathbb C$ by a lattice.

\subsection{Arrangement of hyperplanes and Hirzebruch's construction}

Consider an arrangement of a number $k$ (greater than $2$) of lines $D_1$, \dots, $D_k$ in $\mathbb P^2$ whose equations are respectively $\ell_1=0,\dots,\ell_k=0$ where $\ell_1,\dots,\ell_k$ are linear forms in the homogeneous coordinates $z_1,z_2,z_3$. Assume that not all lines of the arrangement pass through one point. And let $n$ be an integer greater than $1$.

\begin{exa}The \emph{complete quadrilateral arrangement} in $\mathbb P^2$ is formed by the lines connecting each pair among four points in general position, that is to say, no three of them are colinear.
\begin{figure}[ht]
  \centering
  \begin{tikzpicture}
    \draw (4,0) -- (-4,0);
    \draw (-2,{-2*sqrt(3)}) -- (2,{2*sqrt(3)});
    \draw (-2,{2*sqrt(3)}) -- (2,{-2*sqrt(3)});
    \draw ({2+sqrt(3)},-1) -- ({-1-sqrt(3)},{sqrt(3)+1});
    \draw ({2+sqrt(3)},1) -- ({-1-sqrt(3)},{-sqrt(3)-1});
    \draw (-1,{sqrt(15)}) -- (-1,{-sqrt(15)});
    \draw[fill,red] (0,0) circle [radius=0.05];
    \draw[fill,red] (2,0) circle [radius=0.05];
    \draw[fill,red] (-1,{sqrt(3)}) circle [radius=0.05];
    \draw[fill,red] (-1,{-sqrt(3)}) circle [radius=0.05];
  \end{tikzpicture}
  \caption{The complete quadrilateral arrangement.}
  \label{completequadrilateralarrangement}
\end{figure}
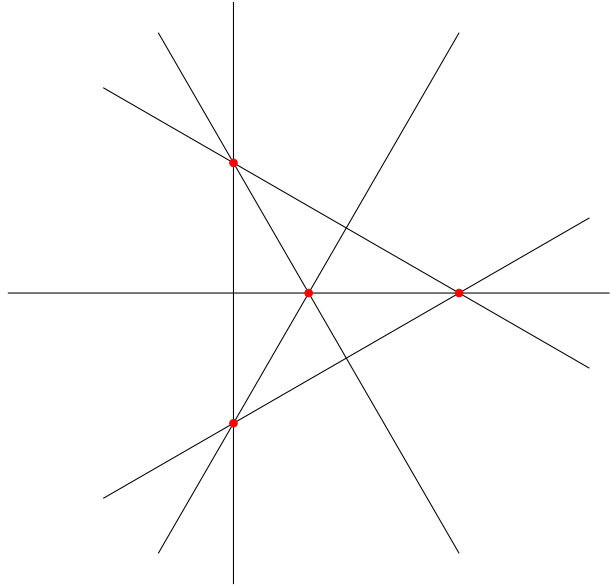
There are three double intersection points and four triple ones which are the initial four points.

Any such four points are equivalent up to a projective transformation. Indeed, any three of the lines, not having a common triple point, give an affine coordinate system and, in suitable homogeneous coordinates $[z_1:z_2:z_3]$, the arrangement is given by the equation$$z_1z_2z_3(z_2-z_1)(z_3-z_2)(z_1-z_3)=0$$and the four triple points by $[1:0:0]$, $[0:1:0]$, $[0:0:1]$, $[1:1:1]$. If one sets $z_4=0$, then the arrangement consists of the lines $D_{ab}$ defined by the equation $z_a-z_b=0$ where $\{a,b\}$ is any (unordered) subset of $\{1,2,3,4\}$.\end{exa}

\begin{rmk}In \cite{zbMATH03809719}, the authors set $z_0=0$ instead of $z_4=0$. This difference in the choice of indices, apparently insignificant, will prove helpful later.
\end{rmk}

\begin{prop}[see \cite{zbMATH03836223}]
The extension $\mathbb C(\mathbb P^2)\big(\big(\frac{\ell_2}{\ell_1}\big)^{1/n},\dots,\big(\frac{\ell_k}{\ell_1}\big)^{1/n}\big)$ of the function field $\mathbb C(\mathbb P^2)$ determines a normal algebraic surface $X$ and an abelian branched covering map $\chi:X\to\mathbb P^2$ of degree $n^{k-1}$, ramified over the arrangement of lines with index $n$.
\end{prop}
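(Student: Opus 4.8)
The plan is to set $K:=\mathbb{C}(\mathbb{P}^2)$, form the extension $L:=K\big((\ell_2/\ell_1)^{1/n},\dots,(\ell_k/\ell_1)^{1/n}\big)$, take $X$ to be the normalization of $\mathbb{P}^2$ in $L$, and read off every asserted property from Kummer theory together with a local computation along each line of the arrangement. The first and main step is to show that $L/K$ is Galois of degree $n^{k-1}$ with group $(\mathbb{Z}/n\mathbb{Z})^{k-1}$. Since $\mathbb{C}\supset\mu_n$ and $\operatorname{char}\mathbb{C}=0$, Kummer theory reduces this to showing that the classes of $\ell_2/\ell_1,\dots,\ell_k/\ell_1$ in $K^\times/(K^\times)^n$ are independent and each of exact order $n$. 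For this I would use the divisorial valuations: for $j\in\{2,\dots,k\}$ the order of vanishing along $D_j$ gives a discrete valuation $v_{D_j}$ on $K$, and since the lines are pairwise distinct and $D_j\neq D_1$ one has $\operatorname{div}(\ell_i/\ell_1)=D_i-D_1$, so $v_{D_j}(\ell_i/\ell_1)=\delta_{ij}$. As the valuation of an $n$-th power is divisible by $n$, any relation $\prod_{i=2}^{k}(\ell_i/\ell_1)^{m_i}\in(K^\times)^n$ forces $n\mid m_i$ for all $i$; hence the classes are independent of order $n$, $L$ is a field, and $[L:K]=n^{k-1}$.

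Next I would invoke finiteness of normalization: as $\mathbb{P}^2$ is an integral normal projective variety over $\mathbb{C}$ and $L/K$ is a finite separable extension, the normalization $\chi\colon X\to\mathbb{P}^2$ of $\mathbb{P}^2$ in $L$ is a finite morphism, $X$ is a normal integral scheme, and, being finite over $\mathbb{P}^2$, a projective surface; moreover $\deg\chi=[L:K]=n^{k-1}$. By functoriality of normalization, the $\Gal(L/K)$-action on $L$ over $K$ induces an action on $X$ over $\mathbb{P}^2$ identifying $\mathbb{P}^2$ with $X/\Gal(L/K)$, so $\chi$ is an abelian branched covering with group $(\mathbb{Z}/n\mathbb{Z})^{k-1}$.

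It remains to locate the ramification. Over $U:=\mathbb{P}^2\setminus\bigcup_{j=1}^{k}D_j$ every $\ell_i/\ell_1$ is an invertible regular function, so $\chi^{-1}(U)\to U$ is the Kummer cover attached to these units, which is finite étale (the relevant discriminants are units since $n$ is invertible); hence $\chi$ is unramified over $U$. Along a line $D_j$ I would localize at its generic point $\eta_j$, a discrete valuation ring with uniformizer $\pi_j:=\ell_j/\ell_1$ if $j\neq1$ and $\pi_1:=\ell_1/\ell_2$ if $j=1$. The element $(\ell_j/\ell_1)^{1/n}$ (resp. $(\ell_2/\ell_1)^{-1/n}$) is an $n$-th root of $\pi_j$, hence satisfies an Eisenstein equation at $\eta_j$ and generates a totally ramified subextension of degree $n$, while modulo it the remaining generators become $n$-th roots of units and contribute only an unramified residue-field extension. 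So every place of $L$ above $\eta_j$ has ramification index exactly $n$ over $K$; since codimension-one ramification of $\chi$ is detected at generic points of curves, $\chi$ ramifies precisely over $D_1\cup\cdots\cup D_k$ with index $n$ along each $D_j$.

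I expect the degree computation of the first step to be the main obstacle — that is, the Kummer independence of $\ell_2/\ell_1,\dots,\ell_k/\ell_1$ modulo $n$-th powers, equivalently the fact that the degree equals the full $n^{k-1}$ rather than a proper divisor. A secondary point deserving care is the bookkeeping in the local ramification analysis: one must notice that $\ell_1$ itself forces $D_1$ to be a branch component (its order being $-1$ along $D_1$ in each $\ell_i/\ell_1$), and that no extra ramification hides over the intersection points of the arrangement — these lie in codimension two, where $X$ may be singular, which is exactly why only normality of $X$ is claimed here and a resolution is carried out afterwards.
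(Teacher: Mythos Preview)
Your argument is correct. The paper itself does not give a formal proof (the proposition is attributed to Hirzebruch), but immediately after the statement it supplies a concrete description of $X$ as the fiber product $\{(p,r)\in\mathbb{P}^2\times\mathbb{P}^{k-1}\mid\ell(p)=c_n(r)\}$, where $\ell=[\ell_1:\cdots:\ell_k]$ and $c_n([u_1:\cdots:u_k])=[u_1^n:\cdots:u_k^n]$; the degree $n^{k-1}$, the abelian Galois group, and the ramification over the arrangement are then inherited from the coordinatewise power map $c_n$, whose properties are recorded in the appendix. Your route is the intrinsic one: Kummer theory with divisorial valuations to pin down $\Gal(L/K)\cong(\mathbb{Z}/n\mathbb{Z})^{k-1}$, then normalization of $\mathbb{P}^2$ in $L$, then a local Eisenstein analysis at each generic point $\eta_{D_j}$. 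The two viewpoints describe the same object from opposite ends --- your $L$ is exactly the function field of the fiber product, and (away from the multiple points of the arrangement) the fiber product already coincides with your normalization. The paper's model buys explicit global equations and affine charts, which are exploited heavily in the subsequent local computations; your approach is cleaner for the Galois-theoretic claims and the ramification indices and avoids writing any charts. Your valuation argument for Kummer independence is precisely the step the fiber-product description leaves implicit, and your final remark that intersection points lie in codimension two (hence irrelevant for the branch locus by purity, but potentially singular on $X$) is exactly why the paper proceeds next to resolve $X$.
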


 The smooth complex surface $Y$ obtained by resolving the singularities of $X$ is an abelian branched covering space of some blow-up $\widehat{\mathbb P^2}$ of the projective plane $\mathbb P^2$. 
 Local charts of $Y$ are given in Remark \ref{lescartes}. 
 Lemma \ref{automorphismsofsigma} describes the ramifications of the branched covering map $Y\to \widehat{\mathbb P^2}$.

The normal variety $X$ is described as a fiber product with respect to the diagram$$\xymatrix{X\ar@{.>>}[d]_{\chi}\ar@{.>}[r]&\mathbb P^{k-1}\ar@{->>}[d]^{c_n}\\\mathbb P^2\ar[r]_{\ell}&\mathbb P^{k-1}}$$where $\ell:\mathbb P^2\to\mathbb P^{k-1}$ maps $[z]=[z_1:z_2:z_3]$ to $[\ell_1(z):\cdots:\ell_k(z)]$.
Here, the map $c_n$ is defined in homogeneous coordinates as
$$
c_n([u_1:\cdots:u_k])=[{u_1}^n:\cdots:{u_k}^n].
$$
As a set, the fiber product may be defined as $$X=\{(p,r)\in\mathbb P^2\times\mathbb P^{k-1}\mid\ell(p)=c_n(r)\}$$and the morphisms $X\to\mathbb P^2$ and $X\to\mathbb P^{k-1}$ as the restrictions to $X$ of the projections $\pr_1:\mathbb P^2\times\mathbb P^{k-1}\to\mathbb P^2$ and $\pr_2:\mathbb P^2\times\mathbb P^{k-1}\to\mathbb P^{k-1}$, respectively, on the first and on the second component. 
In particular, the fiber ${\chi}^{-1}(p)$ of a point $p$, lying on exactly $m$ lines of the arrangement, consists of $n^{k-1-m}$ distinct points. 

\begin{rmk}The morphisms $\alpha_s:\mathbb P^{k-1}\to\mathbb P^{k-1}$ defined  by
$$
\alpha_s([u_1:\cdots:u_k])\to[u_1:\cdots:u_{s-1}:u_se^{\frac{2\pi i}n}:u_{s+1}:\cdots:u_k],
$$where $1\leq s\leq k$, generate
$\Aut(c_n)$.  The automorphism group acts on $\mathbb P^2\times\mathbb P^{k-1}$, trivially on the first component. This action restricts to an action on $X$ by automorphisms of $\Aut(\chi)$. Since the fibers under $\chi$ and $c_n$ are the same and that $\Aut(c_n)$ acts transitively on the fibers, $\Aut(c_n)$ and $\Aut(\chi)$ are naturally isomorphic.
\end{rmk}

\begin{rmk}\label{automorphismsalphaD}
The group $\Aut(\chi)$ is generated by the $k$ automorphisms denoted by $\alpha_D$, indexed by the lines $D$ of the arrangement, satisfying for any lines $D'$ and $D''$ of the arrangement$$\left(\frac{\ell_{D'}}{\ell_{D''}}\right)^{1/n}\circ\alpha_D=e^{\frac{2\pi i}n(\delta_{D,D'}-\delta_{D,D''})}\left(\frac{\ell_{D'}}{\ell_{D''}}\right)^{1/n}$$where $\delta$ is the Kronecker delta. The product $\displaystyle\prod_D\alpha_D$ is the identity.
For every line $D$ of the arrangement, the automorphism $\alpha_D$ corresponds to a small loop turning around $D$ counterclockwise.
\end{rmk}

\begin{rmk}\label{singulartriplepoints}
A point $q$ in $X$ is singular if and only if its image $\chi(q)$ in $\mathbb P^2$ lies on more than two lines of the arrangement.
\end{rmk}

The singularities of $X$ may be resolved by adequate blow-ups, so as to obtain a smooth algebraic surface $Y$ and a morphism $\rho:Y\to X$. Moreover, 
let $\tau:\widehat{\mathbb P^2}\to\mathbb P^2$ denote the blow-up of the projective plane at each of its points where more than two lines of the arrangement meet. There exists a morphism $\sigma:Y\to\widehat{\mathbb P^2}$ such that the following diagram is commutative.
$$\xymatrix{
  Y\ar[r]^\rho\ar@{->>}[d]_\sigma&X\ar@{->>}[d]^{\chi}\\
  {\widehat{\mathbb P^2}}\ar[r]_\tau&\mathbb P^2
}$$
$\sigma$ is a branched covering map of degree $n^{k-1}$ and ramifies over the proper transforms in $\widehat{\mathbb P^2}$ of the lines of the arrangement and over the exceptional curves $\mathbb P(\textup T_p\mathbb P^2)$. The ramification indices are equal to $n$.

\begin{rmk}\label{lescartes}
{\bf Coordinates}.  Adapted coordinates are introduced as follows.  On the coordinate chart $u_k\neq 0$ of $\mathbb P^{k-1}$, choose affine coordinates
$$(v_1,\dots,v_{k-1})=\left(\frac{u_1}{u_k},\dots,\frac{u_{k-1}}{u_k}\right),
$$
so that the defining equations of $X$ in the neighborhood of $q$ are$${v_s}^n=\frac{{u_s}^n}{{u_k}^n}=\frac{\ell_s}{\ell_k}$$for $s$ from $1$ to $k-1$.

Around a point $p= D_1\cap D_2\cdots D_m \subset  \mathbb P^2$, with $l_k(p)\neq 0$, choose an affine coordinate chart $(w_1,w_2)$ where
$$w_1=\frac{\ell_1}{\ell_k}\qquad\textup{and}\qquad w_2=\frac{\ell_2}{\ell_k}.$$
We then have, for $s$ between $3$ and $k-1$,
$$
\frac{\ell_s}{\ell_k}=\alpha_sw_1+\beta_sw_2+\gamma_s
$$with constants $\alpha_s,\beta_s$ and $\gamma_s=0$ if and only if $s$ is not greater than some integer $m$ (equal to the number of lines containing $p$).

The local equations  around $(p,r)\in X\subset \mathbb P^2\times\mathbb P^{k-1}$ are then given in coordinates $(w_1,w_2, v_1,\cdots,v_{k-1})$ of $\mathbb P^{2}\times \mathbb P^{k-1}$
 by 
$${v_1}^n=w_1\ ,\  \ {v_2}^n=w_2
$$and, for $3\leq s\leq k-1$,
$$
{v_s}^n=\alpha_sw_1+\beta_sw_2+\gamma_s.
$$
In order to describe coordinates for $Y$, we first blow up the $(w_1,w_2)$-space by considering the coordinate charts (see the Appendix) $(w_1,w_{2|1})$ and $(w_{1|2},w_2)$ defined by $w_{2|1}=w_2/w_1$ and $w_{1|2}=w_1/w_2$.  We have  that $\tau(w_1,w_{2|1})=(w_1,w_{2|1}w_1)$.  
Next, in  the affine coordinate system $(v_1,\dots,v_{k-1})$ of $\mathbb P^{k-1}$ we partially blow-up  a point  by considering the first $m$-coordinates $(v_1,\dots,v_m)$ and introducing  $m$ coordinate charts, indexed by an integer $r$ between $1$ and $m$,$$(v_{1|r},\dots,v_{r-1|r},v_r,v_{r+1|r},\dots,v_{m|r},v_{m+1},\dots,v_{k-1})$$defined by $v_{s|r}=v_s/v_r$ for $s$ between $1$ and $m$, different from $r$. Up to a permutation of the indices $1,\dots,m$, one may assume for simplicity that $r=1$.
In the coordinates
$$(w_1,w_{2|1},v_1,v_{2|1},\dots,v_{m|1},v_{m+1},\dots,v_{k-1}),$$
the equations defining $Y$ are 
$$\begin{array}{lcll}
{v_1}^n&=&w_1\\
{v_{s|1}}^n&=&\alpha_s+\beta_sw_{2|1}&\textup{for }2\le s\le m\\
{v_s}^n&=&\alpha_sw_1+\beta_sw_1w_{2|1}+\gamma_s&\textup{for }m<s<k.
\end{array}$$
In coordinates, we obtain
$$
\chi(w_1,w_2,v_1,\dots,v_{k-1})=(w_1,w_2)
$$
and
\begin{align*}
&\rho(w_1,w_{2|1},v_1,v_{2|1},\dots,v_{m|1},v_{m+1},\dots,v_k)\\
&=(w_1,w_{2|1}w_1,v_1,v_{2|1}v_1,\dots,v_{m|1}v_1,v_{m+1},\dots,v_k).
\end{align*}
The morphism $\sigma:Y\to\widehat{\mathbb P^2}$ (satisfying $\chi\circ\rho=\tau\circ\sigma$) is described in local coordinates  by 
$$
\sigma(w_1,w_{2|1},v_1,v_{2|1},\dots,v_{m|1},v_{m+1},\dots,v_k)=(w_1,w_{2|1}).
$$

\end{rmk}

\begin{lemma}[\cite{zbMATH03836223} pg. 122]\label{resolutionexceptionaldivisor}
If a point $p$ in $\mathbb P^2$ belongs to a number $m$, greater than $2$, of lines of the arrangement, say $D_1,\dots,D_m$, then each singular point $q$ of $X$ over $p$ is resolved into a smooth curve $C$ and the restriction $\sigma_{|C}:C\to\mathbb P(\textup T_p\mathbb P^2)$ is a branched covering map.
$$\xymatrix{
  C\ar@{^(->}@<-.25ex>[r]\ar@{->>}[d]_{\sigma_{|C}}&Y\ar[r]^\rho\ar@{->>}[d]_\sigma&X\ar@{->>}[d]^{\chi}\\
  \mathbb P(\textup T_p\mathbb P^2)\ar@{^(->}@<-.25ex>[r]&{\widehat{\mathbb P^2}}\ar[r]_\tau&\mathbb P^2
}$$
More precisely, $\sigma_{|C}$ is of degree $n^{m-1}$, ramified over the $m$ points in $\mathbb P(\textup T_p\mathbb P^2)$ corresponding to the directions in $\textup T_p\mathbb P^2$ tangent to the lines of the arrangement passing through $p$. The Euler characteristic of $C$ is $e(C)=n^{m-1}(2-m)+m\cdot n^{m-2}$.
\end{lemma}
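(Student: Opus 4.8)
The plan is to carry out everything in the explicit local coordinates for $Y$ provided by Remark~\ref{lescartes}, and then to apply Riemann--Hurwitz to $\sigma_{|C}$. First I would fix a singular point $q$ of $X$ lying over $p$; by Remark~\ref{singulartriplepoints} and the fiber product description, $\chi^{-1}(p)$ consists of $n^{k-1-m}$ such points, one for each choice of $n$-th roots $v_s=c_s$ of the non-zero constants $\gamma_s$ with $m<s<k$ (recall $\gamma_1=\dots=\gamma_m=0$, so the lines through $p$ are exactly $D_1,\dots,D_m$). In the coordinates of Remark~\ref{lescartes} around $q$, the point $q$ is cut out by $w_1=w_2=0$, $v_1=\dots=v_m=0$ and $v_s=c_s$ for $m<s<k$, and the coordinate formula for $\rho$ gives
$$C=\rho^{-1}(q)=\{\,w_1=0,\ v_1=0,\ v_s=c_s\ (m<s<k)\,\}\cap Y .$$
In the affine chart with coordinates $(w_{2|1},v_{2|1},\dots,v_{m|1})$, the remaining equations of $Y$ show that $C$ is the common zero locus of $v_{s|1}^{\,n}-(\alpha_s+\beta_s w_{2|1})$ for $2\le s\le m$, and that $\sigma_{|C}$ is the projection $(w_{2|1},v_{2|1},\dots,v_{m|1})\mapsto w_{2|1}$ onto the exceptional line $\mathbb P(\textup T_p\mathbb P^2)=\{w_1=0\}$.

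Next I would verify that $C$ is a smooth, irreducible, projective curve. Writing $L_s(w_{2|1})=\alpha_s+\beta_s w_{2|1}$, each $L_s$ ($2\le s\le m$) is a genuine linear form with a \emph{simple} zero: $\beta_s=0$ would force $\ell_s$ proportional to $\ell_1$, i.e.\ $D_s=D_1$, and the zeros $-\alpha_s/\beta_s$ are pairwise distinct and non-zero since any coincidence would likewise force two of the lines $D_1,\dots,D_m$ to coincide. Hence at most one of the $v_{s|1}$ can vanish at a point of $C$, and the Jacobian matrix of the defining equations has full rank $m-1$ at every point of $C$, so $C$ is smooth of pure dimension $1$. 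It is proper over $\mathbb C$ (a fiber of the proper morphism $\rho$, closed in the projective variety $Y$), hence projective; and it is connected because the fibers of a resolution $\rho:Y\to X$ of the normal variety $X$ are connected, so a smooth connected curve, hence irreducible. (Equivalently, $C$ is visibly the abelian cover of the projective line obtained by adjoining $n$-th roots of the $L_s$, with Galois group $(\mathbb Z/n)^{m-1}$, the monodromy around the tangent direction of $D_i$ being the automorphism $\alpha_{D_i}$ of Remark~\ref{automorphismsalphaD}.)

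Then I would read off the degree and ramification of $\sigma_{|C}$. Identifying $\mathbb P(\textup T_p\mathbb P^2)\cong\mathbb P^1$, in this chart the tangent direction of $D_s$ sits at $w_{2|1}=-\alpha_s/\beta_s$ for $2\le s\le m$ (at $0$ for $D_2$, since $\alpha_2=0$), while the direction of $D_1=\{w_1=0\}$ is the remaining point $w_{2|1}=\infty$, covered by the symmetric chart coming from the other blow-up coordinate $w_{1|2}=w_1/w_2$, in which the equations have the same shape. Over a value $t$ different from all $-\alpha_s/\beta_s$ the fiber has $n^{m-1}$ points (a free choice of $n$-th root of $L_s(t)\neq0$ for each $s=2,\dots,m$), so $\deg\sigma_{|C}=n^{m-1}$. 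Over the point of $D_{s_0}$ the relation $L_{s_0}=0$ forces $v_{s_0|1}=0$ while the other $v_{s|1}$ stay free, giving $n^{m-2}$ points, and since $L_{s_0}$ has a simple zero the local model there is $v_{s_0|1}^{\,n}=(\text{unit})\cdot(w_{2|1}+\alpha_{s_0}/\beta_{s_0})$, so each such point has ramification index $n$; the point at infinity is handled identically. Thus $\sigma_{|C}$ is a branched covering of degree $n^{m-1}$ ramified exactly over the $m$ points corresponding to the tangent directions of $D_1,\dots,D_m$, with $n^{m-2}$ preimages of index $n$ over each. Riemann--Hurwitz then gives
$$e(C)=n^{m-1}\cdot e(\mathbb P^1)-\sum_{i=1}^m n^{m-2}(n-1)=2\,n^{m-1}-m\,n^{m-2}(n-1)=n^{m-1}(2-m)+m\,n^{m-2}.$$

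I expect the only genuine difficulty to be the global bookkeeping: checking that the computation in a single affine chart correctly assembles into the claimed abelian branched cover of all of $\mathbb P(\textup T_p\mathbb P^2)\cong\mathbb P^1$ — in particular the behaviour at the point at infinity (the direction of $D_1$), the fact that $C$ is connected, and the identification of the $m$ branch values with the $m$ tangent directions. Once that is in place, the degree count, the local ramification, and the Euler characteristic are all routine.
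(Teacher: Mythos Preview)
The paper does not supply its own proof of this lemma; it is stated with a citation to Hirzebruch and then used. Your argument is correct and is precisely the computation one would reconstruct from the local coordinates provided in Remark~\ref{lescartes}: identify $C=\rho^{-1}(q)$ in those coordinates, read off the residual equations $v_{s|1}^{\,n}=\alpha_s+\beta_s w_{2|1}$ for $2\le s\le m$, check smoothness by the Jacobian (at most one $L_s$ vanishes at a time, and $\beta_s\neq0$), obtain connectedness from Zariski's connectedness theorem applied to the resolution $\rho:Y\to X$ of the normal surface $X$, and then count degree, branch points and ramification to feed into Riemann--Hurwitz. The identification of the $m$ branch values with the $m$ tangent directions of $D_1,\dots,D_m$, including the one at $w_{2|1}=\infty$ in the symmetric chart, is exactly right.

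One inconsequential slip: since $\alpha_2=0$ and $\beta_2=1$, the zero of $L_2$ sits at $w_{2|1}=0$, so the zeros $-\alpha_s/\beta_s$ are not all non-zero. What you actually need, and what your argument uses, is only that these $m-1$ values are pairwise distinct (and finite), with the $m$-th branch point, the tangent direction of $D_1$, at infinity; that part is fine.
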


 In the following lemma we use   the definition of the automorphisms $\alpha_D$ of $\chi$ given in remark \ref{automorphismsalphaD} .

\begin{lemma}\label{automorphismsofsigma}
Every automorphism $\alpha$ of $\chi$ extends as an automorphism of $\sigma$ which coincides with $\alpha$ outside of the exceptional divisor of $\rho:Y\to X$.

For each singular point $q$ in $X$, lying over a point $p$ in $\mathbb P^2$, $\Stab_{\Aut(\chi)}(q)$ is generated by the automorphisms $\alpha_D$, where  $D$ is a line of the arrangement passing through $p$.

The automorphism of $\chi$ corresponding to a small loop turning around $\mathbb P(\textup T_p\mathbb P^2)$ counterclockwise is
$$\prod_{D\ni p}\alpha_D.$$

Finally, the Galois group $\Aut(\sigma_{|C})$ of $\sigma_{|C}:C\to\mathbb P(\textup T_p\mathbb P^2)$ is isomorphic to the quotient of $\Stab_{\Aut(\chi)}(q)$ by the cyclic subgroup generated by $$\prod_{D\ni p}\alpha_D.$$
\end{lemma}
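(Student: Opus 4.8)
The plan is to prove the four assertions in turn, using the explicit charts of Remark~\ref{lescartes} together with the presentation of $\Aut(\chi)\cong(\Z/n)^{k-1}$ by the generators $\alpha_D$ subject to the single relation $\prod_D\alpha_D=\id$; throughout put $\zeta=e^{2\pi i/n}$ and, near the point $p$ under consideration, fix a reference line $D_k$ of the arrangement with $p\notin D_k$ (which exists since the lines have no common point). In the chart of Remark~\ref{lescartes} adapted to $p$ the automorphism $\alpha_D$ of $\chi$ acts by $v_s\mapsto\zeta^{\delta_{D,D_s}-\delta_{D,D_k}}v_s$ while fixing $w_1,w_2$; passing to the blown-up chart $(w_1,w_{2|1},v_1,v_{2|1},\dots,v_{m|1},v_{m+1},\dots,v_{k-1})$, it sends $v_1\mapsto\zeta^{\delta_{D,D_1}-\delta_{D,D_k}}v_1$ and $v_{s|1}\mapsto\zeta^{\delta_{D,D_s}-\delta_{D,D_1}}v_{s|1}$ (and similarly rescales the $v_s$ for $m<s<k$) while fixing $w_1$ and $w_{2|1}$, so it is regular there and covers the identity on the chart $(w_1,w_{2|1})$ of $\widehat{\mathbb P^2}$. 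On the locus where $\rho$ is an isomorphism the only possible extension is $\alpha$ itself, so by separatedness these local extensions glue to an automorphism $\widetilde\alpha$ of $Y$ with $\sigma\circ\widetilde\alpha=\sigma$ that coincides with $\alpha$ off the exceptional divisor of $\rho$. (Alternatively, one could invoke functoriality of the canonical resolution of the isolated surface singularities of $X$ under the $\Aut(\chi)$-action.) In particular $\Aut(\chi)$ embeds in the deck group of $\sigma$, and since $\deg\sigma=n^{k-1}=\lvert\Aut(\chi)\rvert$ while a deck group of a branched cover has order at most the degree, $\sigma$ is in fact Galois with group $\Aut(\chi)$.

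For the second assertion, $\chi$ is Galois of degree $n^{k-1}$ with $\Aut(\chi)$ acting transitively on its fibres, and $\chi^{-1}(p)$ has $n^{k-1-m}$ points, so $\lvert\Stab_{\Aut(\chi)}(q)\rvert=n^{m}$ by orbit--stabilizer. In the coordinates above, the point $q$ over $p$ has $v_1=\dots=v_m=0$ (the vanishing, at $p$, of the linear forms of the $m$ lines $D_i$ through $p$) and $v_{m+1},\dots,v_{k-1}\neq0$; since $\alpha_{D_i}$ with $D_i\ni p$ only rescales $v_i$, it fixes $q$, so $\langle\alpha_D:D\ni p\rangle\subseteq\Stab_{\Aut(\chi)}(q)$. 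Because the only relation in $\Aut(\chi)$ involves all $k$ generators and $m<k$, the subgroup $\langle\alpha_{D_1},\dots,\alpha_{D_m}\rangle$ is isomorphic to $(\Z/n)^{m}$, hence of order $n^m$, and the inclusion is an equality.

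For the third assertion, by the first paragraph the deck group of $\sigma$ is $\Aut(\chi)$, so a small counterclockwise loop about $\mathbb P(\textup T_p\mathbb P^2)\subset\widehat{\mathbb P^2}$, based at a generic nearby point, has a well-defined monodromy $\mu\in\Aut(\chi)$. In the chart $(w_1,w_{2|1})$ one has $\mathbb P(\textup T_p\mathbb P^2)=\{w_1=0\}$ and $\tau(w_1,w_{2|1})=(w_1,w_1w_{2|1})$, so the loop $w_1=\epsilon e^{i\theta}$ with $w_{2|1}$ held fixed and generic projects under $\tau$ to the loop $(\epsilon e^{i\theta},\epsilon e^{i\theta}w_{2|1})$ in $\mathbb P^2$, which winds once counterclockwise around each line through $p$ and has linking number zero with every other line of the arrangement. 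Since $\rho$ and $\tau$ are isomorphisms away from their exceptional loci, the covers $\sigma$ and $\chi$ have the same monodromy along these two loops, so by Remark~\ref{automorphismsalphaD}, and because $\Aut(\chi)$ is abelian, $\mu=\prod_{D\ni p}\alpha_D$.

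For the last assertion, by the extension property $\Stab_{\Aut(\chi)}(q)$ acts on $Y$ preserving $C=\rho^{-1}(q)$ and covering the identity on $\mathbb P(\textup T_p\mathbb P^2)$, which gives a homomorphism $\Stab_{\Aut(\chi)}(q)\to\Aut(\sigma_{|C})$. In the charts of Remark~\ref{lescartes}, where $C$ is cut out by $v_{s|1}^{\,n}=\alpha_s+\beta_sw_{2|1}$ for $2\le s\le m$ and $\sigma_{|C}$ is the projection $(w_{2|1},v_{2|1},\dots,v_{m|1})\mapsto w_{2|1}$ onto $\mathbb P(\textup T_p\mathbb P^2)$, one computes that $\alpha_{D_i}$ with $2\le i\le m$ rescales $v_{i|1}$ by $\zeta$ and fixes the remaining coordinates, while $\alpha_{D_1}$ rescales every $v_{s|1}$ by $\zeta^{-1}$; consequently $\prod_{i}\alpha_{D_i}^{\,b_i}$ acts trivially on $C$ exactly when $b_1\equiv b_2\equiv\dots\equiv b_m\pmod n$, that is, exactly on the cyclic subgroup generated by $\prod_{D\ni p}\alpha_D$. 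Hence $\Stab_{\Aut(\chi)}(q)\big/\langle\prod_{D\ni p}\alpha_D\rangle$ injects into $\Aut(\sigma_{|C})$; its order is $n^{m}/n=n^{m-1}=\deg\sigma_{|C}$ by Lemma~\ref{resolutionexceptionaldivisor}, and since a deck group has order at most the degree, this injection is an isomorphism (and $\sigma_{|C}$ is Galois). The one genuinely delicate point is the first step: arranging the coordinate bookkeeping through the two successive partial blow-ups so that the extended map is visibly regular, and checking that the charts glue; the remainder is orbit--stabilizer counting combined with the classical fact that the monodromy around an exceptional divisor of a blow-up is the product of the monodromies around the lines through the blown-up point.
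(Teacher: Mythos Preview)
Your proof is correct and follows essentially the same approach as the paper: both work in the explicit charts of Remark~\ref{lescartes}, extend the generators $\alpha_D$ by writing down their action in the blown-up coordinates, identify the stabilizer by orbit--stabilizer plus the coordinate description of $q$, and finish the last assertion by a cardinality argument using $\deg\sigma_{|C}=n^{m-1}$ from Lemma~\ref{resolutionexceptionaldivisor}.

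There is one noteworthy difference in how you handle the third assertion. The paper computes the monodromy directly in $Y$: it takes the loop $\gamma(t)=(\varepsilon e^{it},w_{2|1}(0))$ in $\widehat{\mathbb P^2}$, writes down the defining equations of $Y$ along $\gamma$, solves for a continuous lift $\tilde\gamma$, and reads off $\tilde\gamma(2\pi)=\alpha_{D_1}\cdots\alpha_{D_m}(\tilde\gamma(0))$. You instead push the loop down through $\tau$ to $\mathbb P^2$, observe that its image winds once around each of the $m$ lines through $p$ and has linking number zero with the others, and invoke Remark~\ref{automorphismsalphaD} together with abelianness of $\Aut(\chi)$. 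Your argument is slightly more conceptual and avoids the explicit lift; the paper's is more self-contained. Similarly, in the fourth assertion the paper shows $\prod_{D\ni p}\alpha_D$ lies in the kernel and uses surjectivity of $\Stab_{\Aut(\chi)}(q)\to\Aut(\sigma_{|C})$ plus counting, whereas you compute the kernel exactly as $\{b_1\equiv\dots\equiv b_m\}$ and conclude by injectivity plus the bound $\lvert\Aut(\sigma_{|C})\rvert\le\deg\sigma_{|C}$; both are fine.
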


\begin{nota}By a slight abuse of notation, $\alpha_D$ or the letter $\alpha$ will indifferently denote automophisms of $\mathbb P^{k-1}$, of $X$, of $Y$ or even of $C$.\end{nota}

\begin{proof}In order to show that the automorphisms of $\chi$ extend as automorphisms of $\sigma$, it suffices to prove it for the generators $\alpha_D$. Furthermore it suffices to prove it locally using coordinates (see \ref{lescartes}).

Let $p$ be a point in $\mathbb P^2$ which belongs to a number $m$, greater than $2$, of lines of the arrangement, say $D_1,\dots,D_m$, and let $q$ be a singular point in $X$ over $p$. Consider, without loss of generality, the $(w_1,w_2,v_1,\dots,v_{k-1})$ coordinate system of $X$ and the $(w_1,w_{2|1},v_1,v_{2|1}\dots,v_{m|1},v_{m+1},\dots,v_{k-1})$ coordinate system of $Y$. The point $p$ has coordinates $(w_1,w_2)=(0,0)$ and $q$ has coordinates of the form $(0,\dots,0,v_{m+1},\dots,v_{k-1})$ where $v_s$ is not zero for $m<s<k$.

In the coordinate system of $X$,\begin{itemize}
  \item if $D$ is not the line at infinity $D_k$,$$\alpha_D(w_1,w_2,v_1,\dots,v_{k-1})=(w_1,w_2,v_1,\dots,v_{s-1},e^{\frac{2\pi i}n}v_s,v_{s+1},\dots,v_{k-1})$$for some $s$,
  \item if $D$ is $D_k$,$$\alpha_D(w_1,w_2,v_1,\dots,v_{k-1})=(w_1,w_2,e^{-\frac{2\pi i}n}v_1,\dots,e^{-\frac{2\pi i}n}v_{k-1}).$$
\end{itemize}
Therefore, in the corresponding coordinate system of $Y$,\begin{enumerate}
  \item if $D$ is the line $D_1$ defined by the equation $w_1=0$,\begin{align*}&\alpha_D(w_1,w_{2|1},v_1,v_{2|1},\dots,v_{m|1},v_{m+1},\dots,v_{k-1})\\&=(w_1,w_{2|1},e^{\frac{2\pi i}n}v_1,e^{-\frac{2\pi i}n}v_{2|1},\dots,e^{-\frac{2\pi i}n}v_{m|1},v_{m+1},\dots,v_{k-1}),\end{align*}
  \item if $D$ passes through $p$ in $\mathbb P^2$ but is not $D_1$,\begin{align*}&\alpha_D(w_1,w_{2|1},v_1,v_{2|1},\dots,v_{m|1},v_{m+1},\dots,v_{k-1})\\&=(w_1,w_{2|1},v_1,v_{2|1},\dots,v_{s-1|1},e^{\frac{2\pi i}n}v_{s|1},v_{s+1|1},\dots,v_{m|1},v_{m+1},\dots,v_{k-1})\end{align*}for some $s$,
  \item if $D$ is $D_k$,\begin{align*}&\alpha_D(w_1,w_{2|1},v_1,v_{2|1}\dots,v_{m|1},v_{m+1},\dots,v_{k-1})\\&=(w_1,w_{2|1},e^{-\frac{2\pi i}n}v_1,v_{2|1},\dots,v_{m|1},e^{-\frac{2\pi i}n}v_{m+1},\dots,e^{-\frac{2\pi i}n}v_{k-1}),\end{align*}
  \item if $D$ does not pass through $p$ in $\mathbb P^2$ and is not $D_k$,\begin{align*}&\alpha_D(w_1,w_{2|1},v_1,v_{2|1},\dots,v_{m|1},v_{m+1},\dots,v_{k-1})\\&=(w_1,w_{2|1},v_1,v_{2|1},\dots,v_{m|1},v_{m+1},\dots,v_{s-1},e^{\frac{2\pi i}n}v_s,v_{s+1},\dots,v_{k-1})\end{align*}for some $s$.
\end{enumerate}In each case, $\alpha_D$ extends to the exceptional divisor of $\rho:Y\to X$.

Since $q$ has coordinates of the form $(0,\dots,0,v_{m+1},\dots,v_{k-1})$ where $v_s$ is not zero for $m<s<k$, it appears that $\Stab_{\Aut(\chi)}(q)$ is the subgroup generated by the automorphisms $\alpha_{D_1},\dots,\alpha_{D_m}$.

Consider a loop in $\widehat{\mathbb P^2}$
$$\gamma:\left\{\begin{array}{ccl}[0,2\pi]&\longrightarrow&\widehat{\mathbb P^2}\\t&\longmapsto&(w_1(t),w_{2|1}(t))=(\varepsilon e^{it},w_{2|1}(0))\end{array}\right.$$
turning around $\mathbb P(\textup T_p\mathbb P^2)$ and not meeting the proper transforms of the lines $D_1,\dots,D_m$ ($\varepsilon$ is arbitrarily small and $w_{2|1}$ is constant). Finding a lift $\tilde\gamma:[0,2\pi]\to Y$ of $\gamma$ amounts to finding continuous functions $v_1$, $v_{2|1}$, \dots, $v_{m|1}$, $v_{m+1}$, \dots, $v_{k-1}$ satifying the equations
$$\begin{array}{lcll}
{v_1(t)}^n&=&w_1(t)\\
{v_{s|1}(t)}^n&=&\alpha_s+\beta_sw_{2|1}(t)&\textup{for }2\le s\le m\\
{v_s(t)}^n&=&\alpha_sw_1(t)+\beta_sw_1(t)w_{2|1}(t)+\gamma_s&\textup{for }m<s<k
\end{array}$$
that is to say
$$\begin{array}{lcll}
{v_1(t)}^n&=&\varepsilon e^{it}\\
{v_{s|1}(t)}^n&=&{v_{s|1}(0)}^n&\textup{for }2\le s\le m\\
{v_s(t)}^n&=&{v_s(0)}^n+\varepsilon(\alpha_s+\beta_sw_{2|1}(0))(e^{it}-1)&\textup{for }m<s<k.
\end{array}$$
Thus\begin{align*}\tilde\gamma(2\pi)&=(w_1(0),w_{2|1}(0),e^{\frac{2\pi i}n}v_1(0),v_{2|1}(0),\dots,v_{m|1}(0),v_{m+1}(0),\dots,v_{k-1}(0))\\&=\alpha_1\circ\alpha_2\circ\cdots\circ\alpha_m(\tilde\gamma(0))\end{align*}

Since $\sigma_{|C}$ is the restriction of the Galois branched covering map $\sigma$, the morphism $\Stab_{\Aut(\chi)}(q)\to\Aut(\sigma_{|C})$ is surjective. The automorphism
$$\prod_{D\ni p}\alpha_D$$
fixes $C$ so it is in the kernel of $\Stab_{\Aut(\chi)}(q)\to\Aut(\sigma_{|C})$. Finally, since $\Stab_{\Aut(\chi)}(q)$ has $n^m$ elements and that $\Aut(\sigma_{|C})$ has as many elements as the degree of $\sigma_{|C}$, that is $n^{m-1}$, the morphism
$$
\frac{\Stab_{\Aut(\chi)}(q)}{ < \prod_{D\ni p}\alpha_D>}\to\Aut(\sigma_{|C})
$$
is bijective, for cardinality reasons.
\end{proof}

\begin{thm}[Miyaoka-Yau \cite{zbMATH03935211}]\label{miyaokayau}
If the Chern classes of a compact complex surface $Y$ of general type satisfy
$$c_1(Y)^2=3c_2(Y)$$
then $Y$ is the quotient of the complex hyperbolic plane $\mathbb H^2_\mathbb C$ by a lattice.
\end{thm}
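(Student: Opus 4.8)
The plan is to deduce the conclusion from the Kähler--Einstein theory of surfaces of general type, following Yau's original route to the inequality $c_1^2\le 3c_2$ and its extremal case (Miyaoka's independent proof of the inequality does not detect when equality holds).

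First I would reduce to the situation where $K_Y$ is ample. A blow-up of a smooth surface changes $(c_1^2,c_2)$ by $(-1,+1)$, hence strictly decreases $c_1^2-3c_2$ (by $4$); since being of general type is a birational invariant and every minimal surface of general type satisfies the Bogomolov--Miyaoka--Yau inequality $c_1^2\le 3c_2$, the equality $c_1^2=3c_2$ forces $Y$ to coincide with its minimal model, so $K_Y$ is nef and big. The only obstruction to ampleness of a nef and big $K_Y$ on a minimal surface of general type is an irreducible curve $C$ with $K_Y\cdot C=0$; by the Hodge index theorem (using $K_Y^2>0$) such a $C$ has $C^2<0$, and adjunction then forces $C^2=-2$, $g(C)=0$, i.e. $C$ is a $(-2)$-curve. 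To rule these out I would invoke Miyaoka's refinement of the inequality, in which each connected configuration of $(-2)$-curves contributes a strictly positive term to $3c_2-c_1^2$; equality leaves no room for such curves, and Nakai--Moishezon then gives that $K_Y$ is ample.

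With $K_Y$ ample one has $c_1(Y)<0$, so the Aubin--Yau theorem (the Calabi conjecture in the negative case, proved through a complex Monge--Ampère equation) provides a unique Kähler--Einstein metric $\omega$ with $\operatorname{Ric}(\omega)=-\omega$. For a Kähler--Einstein complex surface the Chern--Weil representatives of $c_1^2$ and $c_2$ combine into a pointwise curvature identity whose difference yields an integral formula of the shape
$$3c_2(Y)-c_1(Y)^2 \;=\; \kappa\int_Y |R_0|^2\,dV_\omega \;\ge\;0,$$
where $\kappa>0$ and $R_0$ is the trace-free part of the curvature tensor, which is precisely the obstruction to $\omega$ having constant holomorphic sectional curvature. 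By hypothesis the left-hand side vanishes, so $R_0\equiv 0$, and the sign of the Einstein constant makes the resulting constant holomorphic sectional curvature negative.

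Finally, a compact Kähler manifold of constant negative holomorphic sectional curvature is, after rescaling, locally holomorphically isometric to $\HdC$ with its Bergman metric; by the rigidity (space-form) theorem for complex space forms the universal cover of $(Y,\omega)$ is holomorphically isometric to $\HdC$, and $\pi_1(Y)$ acts on $\HdC$ by holomorphic isometries, freely since $Y$ is a manifold and properly discontinuously and cocompactly since $Y$ is compact, so $Y\cong\Gamma\backslash\HdC$ with $\Gamma\cong\pi_1(Y)$ a torsion-free uniform lattice in $\PU(2,1)$. The main obstacle is the analytic input: the existence of the Kähler--Einstein metric (Yau's solution of the Calabi conjecture) is by far the deepest ingredient, with the extremal analysis of the curvature identity and the elimination of $(-2)$-curves via the refined inequality the remaining substantial points, after which the uniformization step is classical.
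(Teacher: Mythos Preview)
The paper does not give a proof of this theorem at all: it is quoted from the literature (the attribution to Miyaoka--Yau and the citation \cite{zbMATH03935211}) and used as a black box to conclude that Hirzebruch's surface $Y_1$ is a ball quotient. There is therefore nothing in the paper to compare your argument against.

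That said, your sketch is the standard proof via Yau's Kähler--Einstein method and is correct in outline: reduce to $K_Y$ ample, produce the Kähler--Einstein metric by Aubin--Yau, use the Chern--Weil identity $3c_2-c_1^2=\kappa\int|R_0|^2$ to force constant holomorphic sectional curvature, and conclude by the complex space-form theorem. The only step where you are a bit cavalier is the elimination of $(-2)$-curves: invoking ``Miyaoka's refinement'' is fine, but one should be explicit that this is the orbifold (or log) version of the Bogomolov--Miyaoka--Yau inequality applied to the canonical model, or alternatively argue directly that the Kähler--Einstein metric on the canonical model lifts and the $(-2)$-configurations would contribute a positive defect. With that caveat, the proposal is a faithful account of how the theorem is actually proved.
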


In \cite{zbMATH03836223}, Hirzebruch finds three cases where, given an arrangement of lines and a exponent $n$, the corresponding surface $Y$ is of general type and satisfies $c_1(Y)^2=3c_2(Y)$. Therefore those surfaces admit a complex hyperbolic structure. Hirzebruch denotes them by $Y_1$, $Y_2$ and $Y_3$.

\begin{exa}The surface $Y_1$ corresponds to the complete quadrilateral arrangement and to the exponent $n=5$. Hence $\sigma:Y_1\to\widehat{\mathbb P^2}$ is a branched covering map of degree $5^5$ which ramifies over the six lines of the arrangement and the four exceptional curves, all with index $5$.\end{exa}

This paper focuses on the surface $Y_1$.

\subsection{Complex hyperbolic lattice}\label{yamazakiyoshida}

T. Yamazaki and M. Yoshida \cite{zbMATH03809719} have determined a lattice, that they denote by $G_1$, in the group of automorphisms of the complex hyperbolic plane $\mathbb H^2_{\mathbb C}$ such that $\widehat{\mathbb P^2}$ appears as the quotient of $\mathbb H^2_{\mathbb C}$ by $G_1$ and that Hirzebruch's surface $Y_1$ is the quotient by the commutator subgroup $[G_1,G_1]$.

More precisely, $\widehat{\mathbb P^2}$ has the structure of a complex hyperbolic orbifold and $Y_1$ that of a complex hyperbolic manifold. Despite the orbifold structure, $\widehat{\mathbb P^2}$ is simpler than $Y_1$ and reflects also the complex hyperbolic structure.\bigskip

Choose a base point $a$ in the complement $\cal D$ of the branch locus of $\widehat{\mathbb P^2}$ and a loop $\rho(ij)$ based at $a$, for $i,j\in\{0,1,2,3\}$ with $i<j$, turning around $D_{ij}$. A group presentation of the fundamental group $\pi_1({\cal D},a)$ is given by the generators $\rho(ij)$ and the relations$$[\rho(ij)\rho(ik)\rho(jk),\rho(ij)]=1,$$$$[\rho(ij)\rho(ik)\rho(jk),\rho(ik)]=1,$$$$[\rho(ij)\rho(ik)\rho(jk),\rho(jk)]=1\phantom,$$for $i<j<k$ and$$\rho(01)\rho(02)\rho(12)\rho(03)\rho(13)\rho(23)=1.$$Let $\mu$ be $\exp(2\pi i\frac35)$. The group $G_1$ is the image of the representation $R:\pi_1({\cal D},a)\to\textup{PGL}_3(\mathbb C)$ defined by $R(\rho(ij))=R(ij)$ where
$$\begin{array}{rcl}
  R(12)&=&I_3+\begin{pmatrix}-\mu(1-\mu)&\mu(1-\mu)&0\\1-\mu&-(1-\mu)&0\\0&0&0\end{pmatrix}\\
  R(23)&=&I_3+\begin{pmatrix}0&0&0\\0&-\mu(1-\mu)&\mu(1-\mu)\\0&1-\mu&-(1-\mu)\end{pmatrix}\\
  R(13)&=&I_3+\begin{pmatrix}-\mu(1-\mu)&0&\mu(1-\mu)\\(1-\mu)(1-\mu)&0&-(1-\mu)(1-\mu)\\1-\mu&0&-(1-\mu)\end{pmatrix}\\
  R(01)&=&I_3+\begin{pmatrix}\mu^2-1&0&0\\\mu(1-\mu)&0&0\\\mu(1-\mu)&0&0\end{pmatrix}\\
  R(02)&=&I_3+\begin{pmatrix}0&-(1-\mu)&0\\0&\mu^2-1&0\\0&-\mu(1-\mu)&0\end{pmatrix}\\
  R(03)&=&\mu I_3+\mu\begin{pmatrix}0&0&-(1-\mu)\\0&0&-(1-\mu)\\0&0&\mu^2-1\end{pmatrix}\\
\end{array}$$
In fact, $G_1$ is contained in the projective unitary group whose Hermitian form of signature $(+,+,-)$ is given by the Hermitian matrix$$A_1=\begin{pmatrix}\frac{-1}{\mu+\overline\mu}&\overline\mu&1\\\mu&\frac{-1}{\mu+\overline\mu}&\overline\mu\\1&\mu&\frac{-1}{\mu+\overline\mu}\end{pmatrix}.$$


\section{The pencil of conics, an example of Lefschetz fibration}\label{sectionpencilofconics}

A Lefschetz fibration $\widehat{\mathbb P^2}\to\mathbb P^1$ is defined in this section and will allow to derive a similar one $Y_1\to C$ in section \ref{sectionfibrationhirzebruch}.\bigskip

A \emph{conic} in the complex projective plane $\mathbb P^2$ is the zero-locus of a quadratic form in the variables $z_1,z_2,z_3$. The vector space $\textup{Sym}^2({\mathbb C^3}^*)$ of all quadratic forms on $\mathbb C^3$ is of dimension $6$. Since the one and only way for two quadratic forms to define the same conic is to be proportional, the set of conics may be naturally identified with the projective space $\mathbb P(\textup{Sym}^2({\mathbb C^3}^*))$.

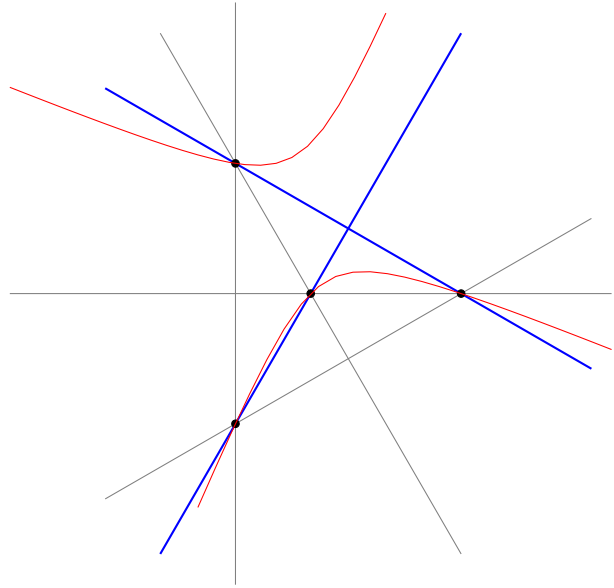
\begin{figure}[ht]
  \centering
  \begin{tikzpicture}
    \draw[gray] (4,0) -- (-4,0);
    \draw[blue,thick] (-2,{-2*sqrt(3)}) -- (2,{2*sqrt(3)});
    \draw[gray] (-2,{2*sqrt(3)}) -- (2,{-2*sqrt(3)});
    \draw[blue,thick] ({2+sqrt(3)},-1) -- ({-1-sqrt(3)},{sqrt(3)+1});
    \draw[gray] ({2+sqrt(3)},1) -- ({-1-sqrt(3)},{-sqrt(3)-1});
    \draw[gray] (-1,{sqrt(15)}) -- (-1,{-sqrt(15)});
    \draw[fill] (0,0) circle [radius=0.05];
    \draw[fill] (2,0) circle [radius=0.05];
    \draw[fill] (-1,{sqrt(3)}) circle [radius=0.05];
    \draw[fill] (-1,{-sqrt(3)}) circle [radius=0.05];
    \draw[red, domain=-1.5:4] plot (\x, {1+\x-sqrt(1+2*\x*\x)});
    \draw[red, domain=-4:1] plot (\x, {1+\x+sqrt(1+2*\x*\x)});
  \end{tikzpicture}
  \caption{The pencil of conics. A generic fiber in red and one of the $3$ singular fibers in blue.}
\end{figure}

The set of conics passing through four points given in $\mathbb P^2$, none three of which lie on the same line, say $p_1=[1:0:0]$, $p_2=[0:1:0]$, $p_3=[0:0:1]$, $p_4=[1:1:1]$, corresponds to a line in $\mathbb P(\textup{Sym}^2({\mathbb C^3}^*))$. For any fifth point (distinct from the first four), there is exactly one conic passing through the five points. And even when the fifth point happens to collide with any point $p$ among the first four, prescribing in addition any line in the tangent plane $\textup T_p\mathbb P^2$, there is again exactly one conic passing through $p_1,\dots,p_4$ and tangent to that line. Following the previous considerations, there is a natural mapping $f:\widehat{\mathbb P^2}\to\mathbb P^1$, where $\widehat{\mathbb P^2}$ denotes the projective plane blown up at the four points. Each exceptional curve in $\widehat{\mathbb P^2}$, obtained by blowing up a point $p$ among the four, is naturally identified with $\mathbb P(\textup T_p\mathbb P^2)$. The map $f$ is a fibration whose fibers are the proper transforms in $\widehat{\mathbb P^2}$ of the conics passing through the four points. Moreover, for each point $p$ among the four, $f$ admits a section $\mathbb P^1\to\mathbb P(\textup T_p\mathbb P^2)$ which maps a conic to its tangent line at $p$.$$\xymatrix{\mathbb P(\textup T_p\mathbb P^2)\ar@{^(->}@<-.25ex>[r]&{\widehat{\mathbb P^2}}\ar@{->>}[d]^f\\&{\mathbb P^1}\ar@{<->}[ul]}$$

Among those conics, represented by points in $\mathbb P^1$, exactly three are singular. Each of them is the union of two lines, one passing through two among the four points and the second passing through the two others. Those six lines together form the complete quadrilateral arrangement. The points in $\mathbb P(\textup T_p\mathbb P^2)$ corresponding to the singular conics are the lines of the arrangement passing through $p$.

In coordinates, the pencil of conics may be defined as$$[z_1:z_2:z_3]\mapsto[(z_1-z_3)z_2:z_1(z_2-z_3)].$$This is a rational mapping defined everywhere except at the points $p_1=[1:0:0]$, $p_2=[0:1:0]$, $p_3=[0:0:1]$, $p_4=[1:1:1]$ where the polynomials $(z_1-z_3)z_2$ and $z_1(z_2-z_3)$ vanish simultaneously. Nevertheless, blowing up the projective plane at one of the four points, say $p_3$, one ends up with local coordinate charts $(w_1,w_{2|1})$ and $(w_{1|2},w_2)$ defined as $w_s=z_s/z_3$ and $w_{r|s}=w_r/w_s$, for $r,s\in\{1,2\}$, where the rational mapping extends in the neighborhood of the exceptional curve $\mathbb P(\textup T_{p_3}\mathbb P^2)$ as$$(w_1,w_{2|1})\mapsto[(w_1-1)w_{2|1}:w_1w_{2|1}-1]$$
and
$$(w_{1|2},w_2)\mapsto[w_2w_{1|2}-1:w_{1|2}(w_2-1)].$$

Note that the fiber over $[1:0]$ is the singular conic defined by $z_1(z_2-z_3)=0$, the one over $[0:1]$ is defined by $(z_1-z_3)z_2=0$ and also the one over $[1:1]$ is defined by $(z_1-z_2)z_3=0$. Those three are the only singular fibers.

The pencil of conics described above is a simple example of Lefschetz pencil or fibration.

\begin{defn}\label{Lefschetzfibration}
A \emph{Lefschetz pencil} or \emph{Lefschetz fibration} $f$ is respectively a rational mapping or morphism from a complex surface $S$ to a complex curve $C$ such that, for every point $s$ in $S$ (where $f$ is defined),\begin{enumerate}
  \item either $f$ is a submersion at $s$
  \item\label{Lefschetznondegenerate} or the differential $\textup d_sf$ of $f$ at $s$ is zero but the second symmetric differential ${\textup d}_s^2f$ is a nondegenerate quadratic form.
\end{enumerate}
\end{defn}

\begin{rmks}\ \begin{enumerate}
  \item If $f$ happens to be a submersion everywhere (and also proper, which is guaranted when $S$ is compact), then Ehresmann's fibration theorem yields that $f$ is a differentiable fiber bundle. In general, except over a finite number of points in $C$, $f$ is a fiber bundle whose fiber is called the \emph{generic fiber} of the Lefschetz fibration.
  \item Besides, the shape of the \emph{singular fibers} are prescribed by the condition \ref{Lefschetznondegenerate} in the previous definition. Indeed, at a point $s$ of $S$ where $f$ is not submersive, the holomorphic analogue of Morse lemma holds that there exists local charts of $S$ and $C$, centered at $s$ and $f(s)$ respectively, where $f$ is as simple as $(x,y)\mapsto xy$. Hence, in the neighborhood of $s$ and up to a holomorphic transformation, the fiber passing through $s$ is the union of two lines intersecting normally.
\end{enumerate}\end{rmks}

\begin{exas}\ \begin{enumerate}
  \item In the local coordinate chart$$(x,y)=\left(\frac{z_1-z_3}{z_1},\frac{z_2}{z_2-z_3}\right)$$centered at the point $[1:0:1]$, the rational mapping defining the pencil of conics is expressed as $f(x,y)=[xy:1]$, so $f$ may be easily expressed in the normal form without resorting to the Morse lemma.
  \item More examples of Lefschetz pencils arise in the way the pencil of conics is defined above with coordinates. Indeed, choose two homogeneous polynomials $P$ and $Q$ of a same nonzero degree $d$, in the variables $z_1,z_2,z_3$, with no common factor and consider the rational mapping$$[z_1:z_2:z_3]\longmapsto[P:Q]$$undetermined at the points where $P$ and $Q$ vanish simultaneously. The fiber over a point $[\lambda:\mu]$ is the curve defined by the equation $\mu P-\lambda Q=0$ of degree $d$. In particular, the fiber over $[0:1]$ is $P=0$, the fiber over $[1:0]$ is $Q=0$ and those two intersect at isolated points. All of the fibers pass through the intersection points of $P=0$ and $Q=0$. For this reason, the rational map is called the pencil generated by $P$ and $Q$ and the set of points defined by $P=Q=0$ is called the base of the pencil. Moreover, B\'ezout's theorem holds that the total number of intersection points of $P=0$ and $Q=0$, counted with their multiplicities, is equal to the product of the degrees of $P$ and $Q$.
\end{enumerate}\end{exas}

\begin{thm}[Picard-Lefschetz formula]Let $f:S\to C$ be a Lefschetz fibration where $S$ and $C$ are compact. In local charts of $S$ and $C$, centered respectively at a singular point $s$ of a singular fiber and at $p=f(s)$, the monodromy of the generic fiber, corresponding to a loop in $C\setminus\{p\}$ turning counterclockwise around $p$, is a right-handed Dehn twist.\end{thm}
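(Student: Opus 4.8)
The plan is to reduce everything to the local normal form $f(x,y)=xy$ furnished by the holomorphic Morse lemma quoted above, to show that the monodromy around $p$ is concentrated in an arbitrarily small bidisc about the singular point $s$, and then to compute it there by hand.

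First I would fix local holomorphic coordinates $(x,y)$ on $S$ centered at $s$ and $t$ on $C$ centered at $p$ in which $f(x,y)=xy$. Choose $\varepsilon>0$ so that the closed bidisc $B=\{|x|\le\varepsilon,\ |y|\le\varepsilon\}$ lies in the chart, then $\delta$ with $0<\delta<\varepsilon^2$ so small that over the punctured disc $\Delta^{*}=\{0<|t|\le\delta\}$ the map $f$ meets $\partial B$ only where it is a submersion. For $t=\delta e^{i\theta}$ the portion of the fiber inside $B$, namely $\{xy=t\}\cap B$, is carried by the coordinate $x$ (with $y=t/x$) onto the fixed annulus $A=\{\delta/\varepsilon\le|x|\le\varepsilon\}$, while the portion of the singular fiber $F_0$ inside $B$ is the node $\{xy=0\}$.

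Next I would localize the monodromy. Over $\Delta^{*}$ the restriction of $f$ to the complement of $\Int B$ is a proper submersion, hence a locally trivial bundle by Ehresmann; since there each fiber is diffeomorphic to $F_0$ with a neighbourhood of the node removed, and the corresponding piece of $F_0$ is a deformation retract of it, one can choose the connection (a lift of $\partial/\partial\theta$) tangent to $\partial B$ so that, after going once around $p$, it returns to the identity outside $B$. Thus the monodromy equals the isotopy class of the homeomorphism $h$ obtained by lifting $\theta\mapsto\delta e^{i\theta}$ inside $B$ and extending by the identity.

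Finally I would write $h$ explicitly. With a smooth $\beta\colon[\delta/\varepsilon,\varepsilon]\to[0,1]$ equal to $1$ near the inner radius and to $0$ near $\varepsilon$, set
$$h_\theta(x,y)=\bigl(e^{\,i\theta\beta(|x|)}\,x,\ e^{\,i\theta(1-\beta(|x|))}\,y\bigr).$$
This sends $\{xy=\delta\}\cap B$ to $\{xy=\delta e^{i\theta}\}\cap B$ (the two factors multiply to $e^{i\theta}xy$), preserves $|x|$, and at $\theta=2\pi$ is the identity near both boundary circles of $A$, so it extends by the identity to all of $F_\delta$. Read in the coordinate $x\in A$, the monodromy $h_{2\pi}$ is $x\mapsto e^{2\pi i\beta(|x|)}x$: a full turn interpolating between a complete rotation along one boundary component and no rotation along the other, i.e.\ a Dehn twist about the core curve $\{|x|=\sqrt{\delta}\}=\{|x|=|y|\}$, the vanishing cycle. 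Tracking the effect on the complex (hence preferred) orientation of $A$ shows the twist is the right-handed one. I expect the only genuinely delicate points to be this orientation bookkeeping and the verification in the third step that the transport outside $B$ can really be taken trivial; the computation in the last step is then routine.
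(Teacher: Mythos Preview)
Your proof sketch is correct and follows the standard argument for the Picard--Lefschetz formula. However, the paper does not actually prove this theorem: it is stated as a classical result and accompanied only by a figure illustrating the right-handed Dehn twist and the monodromy of the model fibration $f:f^{-1}(\mathbb D)\cap\overline{\mathbb D}^2\to\mathbb D$. There is therefore no proof in the paper to compare against; your argument supplies precisely what the paper omits.

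One point worth tightening: the reason the monodromy outside $B$ can be taken to be the identity is cleanest phrased as follows. Over the \emph{closed} disc $\Delta=\{|t|\le\delta\}$ (including $t=0$), the family $\{F_t\setminus\Int B\}_{t\in\Delta}$ is a proper submersion, since the only critical point $s$ has been excised; by Ehresmann it is a smooth fiber bundle over a contractible base, hence trivial, and in particular its monodromy around $\partial\Delta$ is the identity. Your deformation-retract remark gestures at this, but the extension across $t=0$ is the actual mechanism. You should also note (or assume) that $s$ is the only critical point of $f$ over a neighbourhood of $p$, which is implicit in the local phrasing of the statement but is what makes the complement of $\Int B$ genuinely critical-point-free.
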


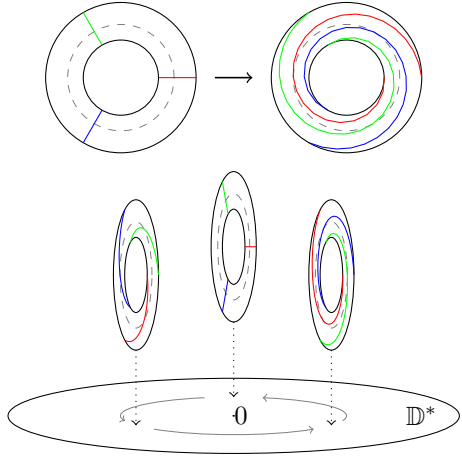
\begin{figure}[ht]
  \centering
  \begin{tikzpicture}[scale=.5]
    \begin{scope}[shift={(-3,0)}]
      \draw (0,0) circle [radius=1];
      \draw [help lines, dashed] (0,0) circle [radius={sqrt(2)}];
      \draw (0,0) circle [radius=2];
      \foreach \a/\c in {0/red, 120/green, 240/blue}
        \draw [\c, rotate around={\a:(0,0)}] (1,0) -- (2,0);
    \end{scope}
    \draw [->, semithick] (-.5,0) -- (.5,0);
    \begin{scope}[shift={(3,0)}]
      \draw (0,0) circle [radius=1];
      \draw [help lines, dashed] (0,0) circle [radius={sqrt(2)}];
      \draw (0,0) circle [radius=2];
      \foreach \a/\c in {0/red, 120/green, 240/blue}
        \draw [\c, domain=1:2, rotate around={\a:(0,0)}] plot ({\x*cos(360*ln(\x)/ln(2))},{-\x*sin(360*ln(\x)/ln(2))});
    \end{scope}
    \begin{scope}[shift={(0,-9)}]
      \begin{scope}[yscale={1/6}]
        \draw (0,0) circle [radius=6];
        \node at (5,0) {$\mathbb D^*$};
        \draw [fill=black] (0,0) circle [radius=.5pt];
        \node at (.2,0) {0} ;
        \foreach \b in {0, 120, 240} {
          \draw [->, help lines, rotate around={{\b+15+90}:(0,0)}] (3,0) arc [radius=3, start angle=0, end angle= {90}] ;
        }
      \end{scope}
      \foreach \b in {0, 120, 240} {
        \begin{scope}[shift={({3*cos(\b+90)},{3/6*sin(\b+90)})}]
          \draw [->, dotted] (0,2) -- (0,0) ;
          \begin{scope}[shift={(0,4)},xscale=.3]
            \draw (0,0) circle [radius=1];
            \draw [help lines, dashed] (0,0) circle [radius={sqrt(2)}];
            \draw (0,0) circle [radius=2];
            \foreach \a/\c in {0/red, 120/green, 240/blue}
              \draw [\c, domain=1:2, rotate around={\a:(0,0)}] plot ({\x*cos(\b*ln(\x)/ln(2))},{-\x*sin(\b*ln(\x)/ln(2))});
          \end{scope}
        \end{scope}
      }
    \end{scope}
  \end{tikzpicture}
  \caption{A right-handed Dehn twist and the monodromy along a loop turning about $0$, of the fibration $f:f^{-1}(\mathbb D)\cap\overline{\mathbb D}^2\to\mathbb D$.}
\end{figure}

The previous result allows to understand the behavior of the fibration in the neighborhood of each singular fiber, but not globally. In order to understand the global picture, there is actually another interpretation of the fibration $f:\widehat{\mathbb P^2}\to\mathbb P^1$.

\begin{prop}\label{crossratioisomorphism}For any integer greater than $3$, let $Q_n$ denote the quotient of $(\mathbb P^1)^n$ by the diagonal action of $\Aut(\mathbb P^1)$, in the sense of \emph{geometric invariant theory}. Then $Q_4$ is isomorphic to $\mathbb P^1$, $Q_5$ to $\widehat{\mathbb P^2}$ and the diagram below is commutative:
$$\xymatrix@R=0pt@!R@C=0pt{
(v_1,v_2,v_3,v_4,v_5) \ar@{|->}[dddd] \ar@{|->}[rrr] & & & \left[\frac{\det(v_1,v_4)}{\det(v_1,v_5)}:\frac{\det(v_2,v_4)}{\det(v_2,v_5)}:\frac{\det(v_3,v_4)}{\det(v_3,v_5)}\right]\\
 & {Q_5} \ar[rr] \ar[dd] & & {\widehat{\mathbb P^2}} \ar[dd]^f \\
 & & & \\
 & {Q_4} \ar[rr] & & {\mathbb P^1} & \\
(v_1,v_2,v_3,v_4) \ar@{|->}[rrr] & & & \left[\frac{\det(v_1,v_3)}{\det(v_1,v_4)}:\frac{\det(v_2,v_3)}{\det(v_2,v_4)}\right] \\
}$$
where ($v_1,\dots,v_5$ denote nonzero vectors in $\mathbb C^2$ representing points in $\mathbb P^1$).\end{prop}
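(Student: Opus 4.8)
I would establish the three assertions separately: $Q_4\cong\mathbb P^1$ (classical cross-ratio), $Q_5\cong\widehat{\mathbb P^2}$ (the substantive point), and finally the commutativity of the square, which reduces to a Plücker identity. For Step~1: since $\PGL_2(\mathbb C)$ acts sharply $3$-transitively on $\mathbb P^1$, the cross-ratio is a complete invariant of configurations of four points. Concretely, $\det(gv,gw)=(\det g)\det(v,w)$ for $g\in\mathrm{GL}_2(\mathbb C)$, so $(v_1,v_2,v_3,v_4)\mapsto[\det(v_1,v_3)\det(v_2,v_4):\det(v_2,v_3)\det(v_1,v_4)]$ is $\PGL_2$-invariant, defined on the semistable locus (the $4$-tuples with no point of multiplicity $>2$), and induces an isomorphism $Q_4\xrightarrow{\sim}\mathbb P^1$; this is the standard geometric invariant theory computation for points on the line. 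After clearing the common factor $\det(v_1,v_4)\det(v_2,v_4)$ this is exactly the bottom horizontal arrow of the diagram.

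\textbf{Step 2: $Q_5\cong\widehat{\mathbb P^2}$.} Let $\Phi\colon(\mathbb P^1)^5\dashrightarrow\mathbb P^2$ be the map of the diagram; each homogeneous coordinate $\det(v_i,v_4)/\det(v_i,v_5)$ is $\PGL_2$-invariant, so $\Phi$ descends to $\psi\colon Q_5\dashrightarrow\mathbb P^2$. For $n=5$ the semistable and stable loci coincide — the $5$-tuples with no point of multiplicity $\ge3$ — and $\PGL_2$ acts freely there (such a tuple occupies at least three distinct points of $\mathbb P^1$), so $Q_5$ is a smooth projective surface. I would first verify that $\psi$ is actually a morphism by examining the strata of the stability condition: on the open set where $v_1,v_2,v_3$ are pairwise distinct no stable configuration annihilates all three coordinates of $\Phi$, while on a stratum $v_i=v_j$ (with $i,j\in\{1,2,3\}$, the remaining point distinct by stability) two of the coordinates agree and the third cannot vanish simultaneously, and $v_1=v_2=v_3$ is excluded by stability. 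Next, $\psi$ is birational: normalising $(v_1,v_2,v_3)$ to $([0:1],[1:0],[1:1])$ on the dense open set, $\Phi$ becomes, in affine coordinates $t_4,t_5$ on the last two factors, $(t_4,t_5)\mapsto[t_4(1-t_5):t_5(1-t_5):t_5(1-t_4)]$, which is inverted by $t_5=(z_2-z_3)/(z_1-z_3)$, $t_4=(z_1/z_2)t_5$. A birational morphism of smooth projective surfaces is a composition of blow-downs; here its exceptional curves are the four sections $E_i=\{v_5=v_i\}$, $i=1,\dots,4$: each is a copy of $\mathbb P^1$, they are pairwise disjoint (an intersection $E_i\cap E_j$ would create a triple point), and $\psi(E_i)=p_i$ — for $i\le3$ the vanishing of $\det(v_i,v_5)$ forces the $i$-th coordinate to dominate, and for $i=4$ the three coordinates become equal, giving $[1:1:1]$. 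Hence $\psi$ exhibits $Q_5$ as $\mathbb P^2$ blown up at $p_1=[1:0:0],p_2=[0:1:0],p_3=[0:0:1],p_4=[1:1:1]$, i.e.\ $Q_5\cong\widehat{\mathbb P^2}$; equivalently one may invoke the classical identification of the space of five points on the line with the degree-five del Pezzo surface. (As a consistency check, the three singular fibres of the forgetful map correspond to the three partitions of $\{1,2,3,4\}$ into two pairs, matching the three singular conics, and the $E_i$ correspond to the exceptional curves $\mathbb P(\textup T_{p_i}\mathbb P^2)$, which are indeed the four sections of $f$.)

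\textbf{Step 3: commutativity.} It remains to check that $f\circ\psi$ equals the forgetful map $Q_5\to Q_4$. Put $a_i=\det(v_i,v_4)$, $b_i=\det(v_i,v_5)$, so $\psi=[a_1/b_1:a_2/b_2:a_3/b_3]$; substituting into $f([z_1:z_2:z_3])=[(z_1-z_3)z_2:z_1(z_2-z_3)]$ and clearing the denominator $b_1b_2b_3$ gives $f\circ\psi=[(a_1b_3-a_3b_1)a_2:a_1(a_2b_3-a_3b_2)]$. The three-term Plücker relation $\det(u,v)\det(w,x)-\det(u,w)\det(v,x)+\det(u,x)\det(v,w)=0$ for vectors of $\mathbb C^2$, applied to $(v_1,v_3,v_4,v_5)$ and to $(v_2,v_3,v_4,v_5)$, yields $a_1b_3-a_3b_1=\det(v_1,v_3)\det(v_4,v_5)$ and $a_2b_3-a_3b_2=\det(v_2,v_3)\det(v_4,v_5)$; cancelling $\det(v_4,v_5)$ leaves $f\circ\psi=[\det(v_1,v_3)\det(v_2,v_4):\det(v_1,v_4)\det(v_2,v_3)]$, which is the bottom arrow evaluated at $(v_1,v_2,v_3,v_4)$ by Step~1. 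The locus $v_4=v_5$, where a vanishing factor was cancelled, is handled by continuity.

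\textbf{Main obstacle.} The delicate step is Step~2: establishing that $\psi$ is an everywhere-defined isomorphism onto the blow-up — not merely a birational map — which requires the careful bookkeeping of GIT stability along the boundary strata (in particular that the four curves $E_i$ are disjoint $(-1)$-curves and that $\psi$ is a local isomorphism away from their union, so that no further curve is contracted). Once $Q_5\cong\widehat{\mathbb P^2}$ and $Q_4\cong\mathbb P^1$ are in place, the commutativity of Step~3 is a short Plücker computation.
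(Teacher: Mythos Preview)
Your proof is correct but takes a more self-contained and structurally different route than the paper. The paper's proof is essentially two lines: it cites the GIT identifications $Q_4\cong\mathbb P^1$ and $Q_5\cong\widehat{\mathbb P^2}$ from the literature (see the remarks following the statement) and then checks commutativity by normalising via the $\PGL_2$-action---choosing a representative with $v_4=[0:1]$, $v_5=[1:0]$, $v_i=[z_i:1]$ for $i=1,2,3$, so that the top arrow becomes literally $[z_1:z_2:z_3]$ and the bottom arrow becomes $f([z_1:z_2:z_3])$ by inspection. You instead \emph{prove} the isomorphism $Q_5\cong\widehat{\mathbb P^2}$ from scratch (showing $\psi$ is a morphism on the stable locus, birational, and contracts exactly the four disjoint sections $E_i=\{v_5=v_i\}$ to the four base points), and you verify commutativity by a coordinate-free Pl\"ucker computation rather than by normalisation. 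Your approach buys a genuine explanation of \emph{which} curves in $Q_5$ become the exceptional divisors of $\widehat{\mathbb P^2}$ and why the four sections of $f$ are the $E_i$---information the paper uses later but never derives---at the cost of considerably more work; the paper's approach buys brevity by outsourcing the GIT facts and exploiting the $3$-transitivity of $\PGL_2$ to trivialise the computation. One small remark: in your Step~2 you normalise $(v_1,v_2,v_3)$ to compute an inverse, whereas the paper normalises $(v_4,v_5)$; the latter choice makes the top arrow collapse to the identity on $[z_1:z_2:z_3]$, which is why the paper's verification is so short.
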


\begin{rmks}\ \begin{enumerate}
  \item The group $\Aut(\mathbb P^1)$ of the automorphisms of $\mathbb P^1$ is simply the group $\textup{PGL}_2(\mathbb C)$ which is also $\textup{PSL}_2(\mathbb C)$. The group acts transitively on triples of distinct points in $\mathbb P^1$. Hence the space $Q_n$ becomes interesting only for $n$ greater than $3$.
  \item The mapping $\displaystyle(v_1,v_2,v_3,v_4)\longmapsto\left[\frac{\det(v_1,v_3)}{\det(v_1,v_4)}:\frac{\det(v_2,v_3)}{\det(v_2,v_4)}\right]$ is nothing but the cross ratio of four points in $\mathbb P^1$, which is invariant by the diagonal action of $\Aut(\mathbb P^1)$. Note that the cross ratio is defined provided that none three of the four points are equal.
  \item An element $(v_1,\dots,v_n)$ in $(\mathbb P^1)^n$ is stable (respectively semi-stable) under the action of $\Aut(\mathbb P^1)$, in the sense of geometric invariant theory, if and only if the largest number of points among $v_1,\dots,v_n$ that coincide is less (respectively not greater) than $n/2$.

Let $Q_n^*$ denote the quotient (in the usual sense), by the diagonal action of $\Aut(\mathbb P^1)$, of the subset of $(\mathbb P^1)^n$ formed by all the $n$-tuples of distinct points.

For $n=4$, $Q_4^*$ is the subset of $Q_4$ of all stable points and the remainder consists of the classes of $4$-tuples $(z_1,z_2,z_3,z_4)$ two of whose components coincide \cite[Example 11.4]{zbMATH01822312}.

For $n=5$, the difference between $Q_5^*$ and $Q_5$ is the set of classes of $5$-tuples $(z_1,z_2,z_3,z_4,z_5)$ such that $z_a=z_b$ for some distinct indices $a$ and $b$. This set is hence the union of $10$ lines of equation $z_a=z_b$ \cite[Example 11.5]{zbMATH01822312}.
  \item The ten lines of the form $z_a=z_b$ in $Q_5$ play symmetric roles, whereas the ten lines in $\widehat{\mathbb P^2}$ consists of the six lines of the arrangement and the four exceptional curves, apparently arising in a different way. This difference is related to the fact that the forgetful map $Q_5\to Q_4$ does not treat equally the five components of $5$-tuples.
\end{enumerate}\end{rmks}

\begin{proof}The maps do not depend on the choice of the representatives $v_1,\dots,v_5$ and are well defined. Let $([z_1:1],[z_2:1],[z_3:1],[0:1],[1:0])$ be a representative of a point $(v_1,v_2,v_3,v_4,v_5)$ in $Q_5^*$ (the proof is similar if the $5$-tuple is not of that form). Then
$$\left[\frac{\det(v_1,v_4)}{\det(v_1,v_5)}:\frac{\det(v_2,v_4)}{\det(v_2,v_5)}:\frac{\det(v_3,v_4)}{\det(v_3,v_5)}\right]=[z_1:z_2:z_3]$$and
$$\left[\frac{\det(v_1,v_3)}{\det(v_1,v_4)}:\frac{\det(v_2,v_3)}{\det(v_2,v_4)}\right]=\left[\frac{z_1-z_3}{z_1}:\frac{z_2}{z_2-z_3}\right]=f([z_1:z_2:z_3])$$so that the diagram is commutative.\end{proof}

\begin{figure}[htb]
  \centering
  \begin{tikzpicture}[scale=1.5]
    \foreach \s in {1,2,3} \node at ({-\s},0) {$\cdot$} ;
    \draw ({-3.8-2/3},-1) -- ({0.8-2/3},-1) -- ({0.8+2/3},1) -- ({-3.8+2/3},1) -- cycle ;
    \draw [help lines] (0,0) node (a) {} to [out=150,in=-15] (-0.67,0.33) node (b) {} to [out=165,in=150] (-1.33,-0.33) node (c) {} to [out=-30,in=-150] (0,0) ;
    \foreach \a/\b in {0/0,-0.67/0.33,-1.33/-0.33} {
      \draw [red, fill=red] (\a,\b) circle [radius=0.5pt] ;
      \draw [->, dotted] (\a,{\b+2}) -- (\a,\b) ;
      \begin{scope}[shift={(\a,\b+4)}, rotate around={90:(0,0)}, scale={0.5}]
        \foreach \s in {1,2,3} \node at ({-\s},0) {$\cdot$} ;
        \draw ({-4-2/3},-.6) -- ({1-2/3},-.6) -- ({1+2/3},.6) -- ({-4+2/3},.6) -- ({-4-2/3},-.6) ;
        \draw [red, fill=red] (\a,\b) circle [radius=0.5pt] ;
        \draw [help lines] (0,0) to [out=150,in=-15] (-0.67,0.33) to [out=165,in=150] (-1.33,-0.33) to [out=-30,in=-150] (0,0) ;
        \draw [dashed, help lines, yscale=0.5] (-.7,0) circle [radius=1] ;
      \end{scope}
    }
  \end{tikzpicture}
  \caption{The monodromy of the forgetful mapping $Q_5^*\to Q_4^*$ along the grey loop is a right-handed Dehn twist along the dashed loop.}
  \label{forgetfulmapmonodromy}
\end{figure}
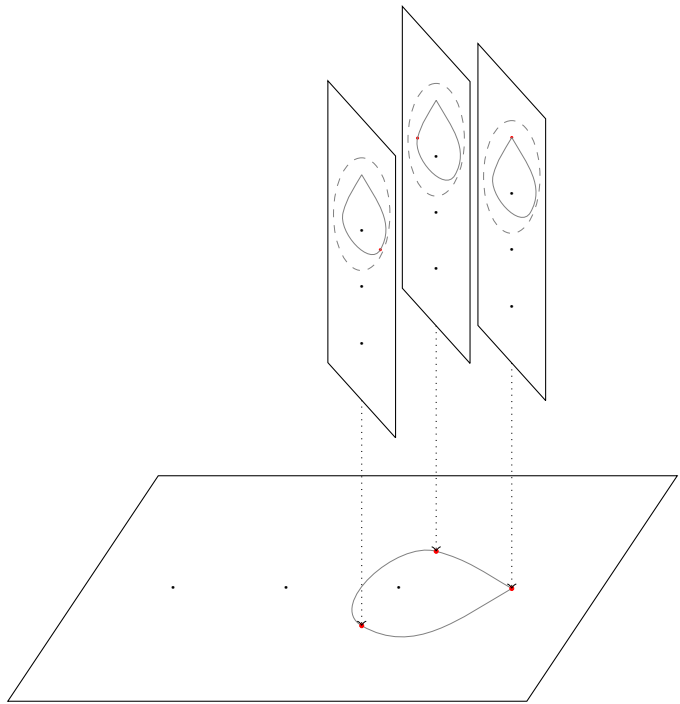

\begin{cor}\label{corforgetfulmapmonodromy}
The monodromy representation of the fibration $f:Q_5^*\to Q_4^*$ is a morphism $\pi_1(Q_4^*)\to\Mod_{0,4}$ such that the image of each generator of $\pi_1(Q_4^*)$ is a right-handed Dehn twist, as drawn in figure \ref{forgetfulmapmonodromy}.
\end{cor}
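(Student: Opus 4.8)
The plan is to derive this from the Picard--Lefschetz formula together with the identification of $f$ with the forgetful map given by Proposition~\ref{crossratioisomorphism}. First I would record the shape of the base and of the fibers. By that proposition $Q_4\cong\mathbb P^1$, and $Q_4^*$ is the complement in $Q_4$ of the three points corresponding to configurations with a repeated coordinate; under the isomorphism $Q_4\cong\mathbb P^1$ these are the three critical values $[1:0]$, $[0:1]$, $[1:1]$ of the pencil of conics $f:\widehat{\mathbb P^2}\to\mathbb P^1$. Hence $Q_4^*$ is a thrice-punctured sphere: $\pi_1(Q_4^*)$ is free of rank two and has the standard presentation by three loops $\delta_1,\delta_2,\delta_3$, each turning once counterclockwise about one of the removed points, with the single relation $\delta_1\delta_2\delta_3=1$. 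On the source side, a regular fiber of $f:Q_5^*\to Q_4^*$ is the proper transform in $\widehat{\mathbb P^2}$ of a smooth conic through the four base points $p_1,\dots,p_4$ of the pencil, with its four intersection points with the exceptional divisors deleted; so it is a four-punctured sphere. These four deleted points are the traces of the four sections of $f$ (a conic being sent to its tangent line at $p_i$), so the monodromy fixes each of them and takes values in $\Mod_{0,4}$ with labelled punctures.

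Next I would exploit the local structure of the Lefschetz fibration. Each singular fiber of $f$ is a union of two lines meeting at a single point: over $[1:0]$ the lines $z_1=0$ and $z_2=z_3$, meeting at $[0:1:1]$; over $[0:1]$ the lines $z_1=z_3$ and $z_2=0$, meeting at $[1:0:1]$; over $[1:1]$ the lines $z_3=0$ and $z_1=z_2$, meeting at $[1:1:0]$. None of these three nodes is a base point, so the blow-up $\widehat{\mathbb P^2}\to\mathbb P^2$ leaves them untouched; thus $f$ has exactly one critical point over each critical value, where (as the example with the chart $(x,y)\mapsto[xy:1]$ shows) $f$ is in standard normal form. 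By the Picard--Lefschetz formula quoted above, the monodromy of the generic fiber along $\delta_i$ is a right-handed Dehn twist about the corresponding vanishing cycle $c_i$. Since this twist is supported in an arbitrarily thin annular neighbourhood of $c_i$, and $c_i$ collapses onto the $i$-th node as the fiber degenerates, the supporting annulus can be taken disjoint from $p_1,\dots,p_4$; so the monodromy of $\delta_i$ is the right-handed Dehn twist in $\Mod_{0,4}$ about $c_i$, viewed now as a curve on the four-punctured sphere.

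It then remains to identify $c_i$. The curve $c_i$ bounds a disc collapsing to the $i$-th node, so on a nearby smooth conic it separates the two component lines of the singular fiber, hence separates the two base points lying on one of these lines from the two lying on the other. For the three singular fibers above, these pairs are $\{p_2,p_3\}\,|\,\{p_1,p_4\}$, then $\{p_2,p_4\}\,|\,\{p_1,p_3\}$, then $\{p_1,p_2\}\,|\,\{p_3,p_4\}$: the three ways of splitting $\{p_1,\dots,p_4\}$ into two pairs. On a four-punctured sphere there is a unique isotopy class of essential simple closed curve realizing each such splitting, and the splitting realized by a curve is preserved under any homeomorphism fixing the punctures, hence does not depend on the base point or on the arcs used to base $\delta_i$; therefore $c_i$ is the corresponding partition curve, which is the dashed curve of Figure~\ref{forgetfulmapmonodromy}. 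Consequently the monodromy carries each of the three standard generators of $\pi_1(Q_4^*)$ to a right-handed Dehn twist about one of the three partition curves, as drawn; the relation $\delta_1\delta_2\delta_3=1$ is then automatically respected.

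The step I expect to need the most care is this last identification: pinning down the vanishing cycle of each node, checking that it is isotopic to a curve avoiding the four marked points, and confirming that the handedness of the Dehn twist is inherited in the passage from the closed sphere to the four-punctured sphere --- which holds because the twist is supported away from the punctures, where the ambient orientation assigns it a sign. One should also note in passing that all four sections meet each singular fiber away from its node, so that the marked points stay clear of the shrinking vanishing cycle throughout the degeneration.
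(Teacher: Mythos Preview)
Your proposal is correct and follows essentially the approach the paper leaves implicit: the corollary is stated without proof, as a direct consequence of the Picard--Lefschetz formula and the identification of $f$ with the forgetful map in Proposition~\ref{crossratioisomorphism}, and you have carefully filled in exactly those details (the structure of the base, the fibers, the three nodes, and the identification of each vanishing cycle with the partition curve it determines).

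One small point of comparison: immediately after the corollary, the paper offers an alternative description via Remark~\ref{birmanexactsequence}, interpreting the monodromy as the point-pushing map in the Birman exact sequence for $\Mod_{0,4}$. From that viewpoint the loop $\delta_i$ pushes the fourth marked point around one of the other three, and the resulting mapping class is the Dehn twist about the curve enclosing that pair --- which recovers your identification of the vanishing cycles without appealing to the degeneration picture. Your Picard--Lefschetz argument is more direct given what the paper has set up, while the Birman viewpoint explains why the monodromy is an isomorphism onto $\Mod_{0,4}$, a fact used later.
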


Here we use the definition of the mapping class group $\Mod_{0,4}$ given in the next section.
 
\begin{rmk}\label{birmanexactsequence}
The monodromy representation is a particular case of the \emph{point pushing map} appearing in the \emph{Birman exact sequence} (see \cite[Theorem 4.6]{zbMATH05960418}).
Indeed, viewing $Q_4^*$ as a sphere with $3$ punctures, the monodromy along (the homotopy class) of a loop $\gamma$ in $Q_4^*$ is, according to corollary \ref{corforgetfulmapmonodromy} and figure \ref{forgetfulmapmonodromy}, the mapping class obtained by pushing the base point of $Q_4^*$ along $\gamma$.
\end{rmk}

\section{Mapping class group}\label{sectionmcg}

This section is devoted to the mapping-class group $\Mod_{0,4}$, in order to better understand the monodromy of the pencil of conics.

\begin{nota}The mapping class group of a closed orientable surface of genus $g$ and with $n$ marked points is denoted by $\Mod_{g,n}$.\end{nota}

A natural approach to understand and describe the mapping class group $\Mod_{0,4}$ of the sphere with four marked points is to consider the torus with four marked points. Indeed, the torus is a double branched covering space of the sphere, with ramification over $4$ points. The automorphism group is generated by the \emph{hyperelliptic involution}: identifying the torus with the quotient $\mathbb R^2/\mathbb Z^2$, the hyperelliptic involution is induced by the linear transformation $(x,y)\mapsto(-x,-y)$ corresponding to the matrix $-\textup I_2$ (see figure \ref{hyperellipticinvolution}). The hyperelliptic involution stabilizes four points of the torus.

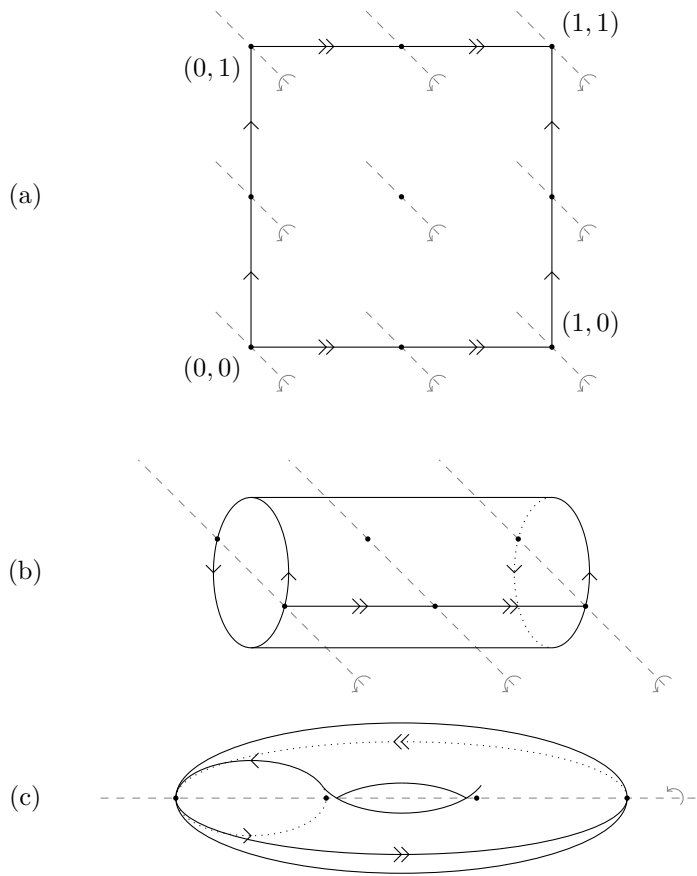
\begin{figure}[ht!]
  \centering
  \begin{tikzpicture}
    \begin{scope}
      \node at (-3,2) {(a)} ;
      \draw (0,0) node [below left] {$(0,0)$} -- (4,0) node [above right] {$(1,0)$}  -- (4,4) node [above right] {$(1,1)$} -- (0,4) node [below left] {$(0,1)$} -- cycle ;
      \foreach \x in {0,4} \foreach \y in {1,3} \draw ({\x-.1},{\y-.1}) -- (\x,\y) -- ({\x+.1},{\y-.1}) ;
      \foreach \x in {1,3} \foreach \y in {0,4} {
        \draw ({\x-.1},{\y+.1}) -- ( \x    ,\y) -- ({\x-.1},{\y-.1}) ;
        \draw ( \x    ,{\y+.1}) -- ({\x+.1},\y) -- ( \x    ,{\y-.1}) ;
      }
      \foreach \x in {0,2,4} {
        \foreach \y in {0,2,4} {
          \begin{scope}[shift={(\x,\y)}]
            \draw [help lines, dashed] (.5,-.5) -- (-.5,.5) ;
            \draw [->, help lines] ({.5+.125/sqrt(2)},{-.5+.125/sqrt(2)}) arc [radius=0.125, start angle=45, end angle={45+180}] ;
            \fill (0,0) circle (1pt) ; 
          \end{scope}
        }
      }
    \end{scope}
    \begin{scope}[shift={(0,-3)}]
      \node at (-3,0) {(b)} ;
      \foreach \x in {0,2,4} {
        \begin{scope}[shift={(\x,0)}]
          \draw [help lines, dashed] (1.5,-1.5) -- (-1.5,1.5) ;
          \draw [->, help lines] ({1.5+.125/sqrt(2)},{-1.5+.125/sqrt(2)}) arc [radius=0.125, start angle=45, end angle={45+180}] ;
          \foreach \theta in {-1,1} \fill ({\theta/sqrt(5)},{-\theta/sqrt(5)}) circle (1pt) ;
        \end{scope}
      }
      \draw [xscale=.5] (0,0) circle (1);
      \draw [xscale=.5] (8,-1) arc [radius=1, start angle=-90, end angle=90];
      \draw [xscale=.5, dotted] (8,1) arc [radius=1, start angle=90, end angle=270];
      \foreach \y in {-1,1} \draw (0,\y) -- (4,\y) ;
      \begin{scope}[shift={({1/sqrt(5)},{-1/sqrt(5)})}]
        \draw (0,0) -- (4,0) ;
        \foreach \x in {1,3} {
          \draw [shift={(\x,0)}] (-.1,.1) -- (0,0) -- (-.1,-.1) ;
          \draw [shift={(\x,0)}] (0,.1) -- (.1,0) -- (0,-.1) ;
        }
      \end{scope}
      \foreach \xa in {0,4} \foreach \xb in {-1,1} \draw [shift={({\xa+\xb/2},0)}] ({-.1*\xb},{-.1*\xb}) -- (0,0) -- ({.1*\xb},{-.1*\xb}) ;
    \end{scope}
    \begin{scope}[shift={(0,-6)}]
      \node at (-3,0) {(c)} ;
      \draw [dashed, help lines] (-2,0) -- (6,0) ;
      \draw [->, help lines] ({5.62+.125/sqrt(2)},{-.125/sqrt(2)}) arc [radius=0.125, start angle=-45, end angle={-45+180}] ;
      \draw [yscale=1/3] (2,0) circle (3) ;
      \begin{scope}[yscale=.5]
        \draw          (-1,0) arc [radius=1, start angle=180, end angle=20];
        \draw [dotted] (-1,0) arc [radius=1, start angle=-180, end angle=20];
        \draw ({cos(20)},{sin(20)}) arc [radius={2/cos(20)-1}, start angle=-180+20, end angle=-20];
        \draw (2,{2/cos(20)-1-2*tan(20)}) arc [radius={2/cos(20)-1}, start angle=90, end angle={90+acos((2*sin(20)/(2-cos(20)))}];
        \draw (2,{2/cos(20)-1-2*tan(20)}) arc [radius={2/cos(20)-1}, start angle=90, end angle={90-acos((2*sin(20)/(2-cos(20)))}];
      \end{scope}
      \draw [yscale=.25] (-1,0) arc [radius=3, start angle=-180, end angle=0];
      \draw [yscale=.25, dotted] (-1,0) arc [radius=3, start angle=180, end angle=0];
      \draw [shift={(0,.5)}] (.1,.1) -- (0,0) -- (.1,-.1) ;
      \draw [shift={(0,-.5)}] (-.1,.1) -- (0,0) -- (-.1,-.1) ;
      \draw [shift={(2,.75)}] (.1,.1) -- (0,0) -- (.1,-.1) ;
      \draw [shift={(2,.75)}] (0,.1) -- (-.1,0) -- (0,-.1) ;
      \draw [shift={(2,-.75)}] (-.1,.1) -- (0,0) -- (-.1,-.1) ;
      \draw [shift={(2,-.75)}] (0,.1) -- (.1,0) -- (0,-.1) ;
      \foreach \x in {-1,1,3,5} \fill (\x,0) circle (1pt) ;
    \end{scope}
  \end{tikzpicture}
  \caption{The hyperelliptic involution in three representations of the torus: (a) as the fundamental domain $[0,1]^2$ of the action of $\mathbb Z^2$ on $\mathbb R^2$ by translations, (b) as a fundamental domain of the action of $\mathbb Z\times\{0\}$ on the cylinder $\mathbb R^2/\{0\}\times\mathbb Z$, (c) as the usual embedding of the torus in the space.}
  \label{hyperellipticinvolution}
\end{figure}

The group $\Mod_{1,1}$ is naturally isomorphic to $\textup{SL}_2(\mathbb Z)$, via the linear action which descends to $\mathbb R^2/\mathbb Z^2$.
By using the hyperelliptic involution this yields the following classical isomorphism.
\begin{fact}\label{mod04gamma2}$\Mod_{0,4}$ is isomorphic to the principal congruence subgroup of level $2$ in $\textup{PSL}_2(\mathbb Z)$.\end{fact}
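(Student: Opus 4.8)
The plan is to exploit the classical double branched cover $T^2 \to S^2$ ramified over four points, and the resulting relation between the hyperelliptic mapping class group of the torus and $\Mod_{0,4}$. First I would recall the isomorphism $\Mod_{1,1} \cong \mathrm{SL}_2(\Z)$ (realized by the linear action on $\R^2/\Z^2$): every mapping class of the once-punctured torus is represented by a linear automorphism, and conversely. Since the hyperelliptic involution $\iota$ (induced by $-\mathrm{I}_2$) is central in $\mathrm{SL}_2(\Z)$ and commutes with every linear map, every such representative descends through the quotient $T^2/\iota \cong S^2$, taking the four fixed points of $\iota$ to the four marked points of the sphere. This produces a homomorphism $\mathrm{SL}_2(\Z) \cong \Mod_{1,1} \to \Mod_{0,4}$.

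Next I would identify the kernel and image of this homomorphism. The kernel is generated by $\iota$ itself: the involution $-\mathrm{I}_2$ acts trivially on $S^2$ (it swaps the two sheets but fixes the quotient pointwise), so the map factors through $\mathrm{PSL}_2(\Z) = \mathrm{SL}_2(\Z)/\{\pm \mathrm{I}_2\}$. Conversely, a standard argument (using that any orientation-preserving homeomorphism of $S^2$ fixing the four branch points lifts to the torus, by covering-space theory for the branched cover, and that the lift is isotopic to a linear map) shows the induced map $\mathrm{PSL}_2(\Z) \to \Mod_{0,4}$ is injective. The image, however, is not all of $\Mod_{0,4}$: a lifted mapping class must fix \emph{each} of the four branch points individually rather than permute them, and the branch points correspond to the $2$-torsion points of the torus; tracking how $\mathrm{PSL}_2(\Z)$ acts on $(\tfrac12\Z/\Z)^2 \cong (\Z/2)^2$ shows the image consists exactly of those mapping classes inducing the trivial permutation of the four points, which is the subgroup of $\mathrm{PSL}_2(\Z)$ acting trivially on $(\Z/2)^2$ — that is, $\overline{\Gamma}(2) = \Gamma(2)/\{\pm\mathrm{I}_2\}$, but since $-\mathrm{I}_2 \notin \Gamma(2)$ this is just $\Gamma(2)$.

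Putting this together gives an isomorphism between the subgroup of $\Mod_{0,4}$ fixing all four marked points and $\Gamma(2) \le \mathrm{PSL}_2(\Z)$. I would then note that, under the convention of the paper, $\Mod_{0,4}$ \emph{means} the mapping class group fixing each of the four marked points (this is the convention needed so that the monodromy representation of Corollary~\ref{corforgetfulmapmonodromy} lands in it), so the statement follows. The main obstacle — really the only subtle point — is the surjectivity/injectivity of the lifting correspondence: one must verify carefully that every orientation-preserving homeomorphism of the four-marked sphere fixing the marked points lifts to the torus (the obstruction lives in a cohomology group that vanishes because the branched cover is characterized by the unique index-$2$ subgroup behavior, or one invokes the Birman–Hilden theorem), and that the lift is unique up to the deck transformation $\iota$ and isotopy. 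Granting Birman–Hilden, the remaining steps are the routine computation of the action of $\mathrm{SL}_2(\Z)$ on $2$-torsion and the identification $\Gamma(2) \cong \{A \in \mathrm{SL}_2(\Z) : A \equiv \mathrm{I}_2 \bmod 2\}/\{\pm \mathrm{I}_2\}$, noting $-\mathrm{I}_2 \not\equiv \mathrm{I}_2 \bmod 2$.
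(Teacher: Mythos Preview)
Your approach is exactly the one the paper indicates (and does not spell out further): use the hyperelliptic double cover $T^2\to S^2$ and the identification $\Mod_{1,1}\cong\mathrm{SL}_2(\Z)$, then read off which linear maps fix the four $2$-torsion points. So the strategy matches the paper's.

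One factual slip to correct: $-\mathrm I_2\equiv \mathrm I_2\pmod 2$, so $-\mathrm I_2$ \emph{does} lie in the $\mathrm{SL}_2$-level congruence subgroup $\{A\in\mathrm{SL}_2(\Z):A\equiv\mathrm I_2\bmod 2\}$. Hence the image in $\mathrm{PSL}_2(\Z)$ is the honest quotient by $\{\pm\mathrm I_2\}$; that quotient is what the paper calls $\Gamma(2)$ (it explicitly warns: ``not to be confused with its counterpart in $\mathrm{SL}_2(\Z)$''). This does not harm the argument, only your last parenthetical remark. Also, your phrase ``the image, however, is not all of $\Mod_{0,4}$'' momentarily conflates the image of $\mathrm{PSL}_2(\Z)$ (which fixes only the branch point coming from the origin and permutes the other three) with the subgroup you actually want to isolate; what you mean is that the \emph{preimage} of the pure subgroup $\mathrm{PMod}_{0,4}$ inside $\mathrm{PSL}_2(\Z)$ is exactly $\Gamma(2)$, and Birman--Hilden gives surjectivity of $\Gamma(2)\to\mathrm{PMod}_{0,4}$. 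With those two clarifications the proof is clean.
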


\begin{nota}Let $\Gamma(2)$ denote the principal congruence subgroup of level $2$ in $\textup{PSL}_2(\mathbb Z)$, that is to say, the kernel of the morphism $\textup{PSL}_2(\mathbb Z)\to\textup{PSL}_2(\mathbb Z/2\mathbb Z)$ induced by the reduction modulo $2$, not to be confused with its counterpart in $\textup{SL}_2(\mathbb Z)$.\end{nota}

\paragraph{Nielsen-Thurston classification.}Any element of $\Mod_{g,n}$ admits a representative $h$ which is either\begin{enumerate}
  \item\emph{periodic}, that is to say, some power of $h$ is the identity,
  \item\emph{reducible}, that is to say, $h$ preserves some finite union of disjoint simple closed curves on the surface,
  \item\emph{pseudo-Anosov}, that is to say, there exists a pair of transverse measured foliations $({\cal F}^s,\mu_s)$ and $({\cal F}^u,\mu_u)$ on the surface and a number $\lambda>1$ such that$$h_*({\cal F}^s,\mu_s)=({\cal F}^s,\lambda^{-1}\mu_s)\quad\textup{and}\quad h_*({\cal F}^u,\mu_u)=({\cal F}^u,\lambda\mu_u)$$
\end{enumerate}(see \cite{zbMATH00193437} and \cite[Section 11.2 and Theorem 13.2]{zbMATH05960418}).

\begin{exas}When $S$ is a sphere or a torus, the Nielsen-Thurston classification is quite elementary as it boils down to the study of $2$-by-$2$ matrices. The group $\Mod_{1,1}$ is indeed isomorphic to $\textup{SL}_2(\mathbb Z)$ (see \ref{mod11sl2}). Let $A$ be a matrix in $\textup{SL}_2(\mathbb Z)$ which is not the identity. Such a matrix is conjugate in $\textup{SL}_2(\mathbb R)$ either to\begin{enumerate}
  \item a diagonal matrix whose entries are conjugate complex numbers of modulus $1$, in which case $|\tr(A)|<2$ and $A$ acts on the plane as a finite-order rotation,
  \item an upper triangular matrix whose diagonal entries are equal to $1$, in which case $|\tr(A)|=2$ and $A$ acts on the plane as a transvection, hence preserving a line pointwise,
  \item a diagonal matrix whose entries are real numbers, inverse of each other, in which case $|\tr(A)|>2$ and the action of $A$ on the plane has two privileged directions (or foliations), one that is contracted and one that is dilated.
\end{enumerate}The matrix $A$ is respectively called \emph{elliptic}, \emph{parabolic} or \emph{hyperbolic}. Consequently, the periodic, reducible or pseudo-Anosov nature of a mapping class is simply determined by the trace of the representative matrix.

Quite the same goes for $\Mod_{0,4}$ (see corollary \ref{mod04gamma2}). Let $A$ be a matrix representing an element of $\Gamma(2)$. The action of $A$ on the torus induces an action on the sphere, through the branched covering map mentionned above. Similarly, the periodic, reducible or pseudo-Anosov nature of a mapping class is determined by the absolute value of the trace of $A$. For example, since the absolute value of the trace of any matrix representing a non-trivial element of $\Gamma(2)$ is at least $2$, $\Mod_{0,4}$ contains no periodic element.
\end{exas}

\begin{defn}\label{defnsurfacebundle}
A \emph{surface bundle over the circle} or a \emph{mapping torus} is a quotient space of the form $(S\times\mathbb R)/\mathbb Z$ where $S$ is a closed surface and $\mathbb Z$ acts on $S\times\mathbb R$ by $n\cdot(x,t)=(h^n(x),t+n)$ where $h:S\to S$ is a homeomorphism. This space is denoted by $M_h$ and the projection $\pr_2:S\times\mathbb R\to\mathbb R$ induces a fibration $M_h\to\mathbb R/\mathbb Z$ over the circle, with fiber $S$.
\end{defn}

\begin{rmks}The previous construction depends, up to homeomorphism, only on the isotopy class of $h$, that is to say, on the class of $h$ in $\Mod(S)$. Moreover, it only depends on the conjugacy class of the class of $h$ in $\Mod(S)$. Furthermore, $M_h$ and $M_{h^{-1}}$ are also homeomorphic. If $A$ is a subset of $S$ and $h$ stabilizes each point in $A$, then $M_h$ depends only on the conjugacy class of the class of $h$ in $\Mod(S,A)$ and $M_h$ contains the subset $A\times\mathbb S^1$.\end{rmks}

Thurston has shown that if $S$ is a closed surface of some genus $g\ge2$ and if $h$ is a homeomorphism of $S$, then the surface bundle $M_h$ admits a hyperbolic structure if and only if $h$ is pseudo-Anosov \cite{zbMATH04103989,zbMATH00921468,zbMATH00051945}.\bigskip

The group $\Gamma(2)$ has multiple interests in the present context, which are not purely coincidental as shown in the following: it appears as a lattice in the group $\Isom^+(\mathbb H^2_{\mathbb R})$ and it is isomorphic to the mapping class group $\Mod_{0,4}$.

Recall that the group $\textup{PSL}_2(\mathbb Z)$ is a lattice in $\textup{PSL}_2(\mathbb R)$ generated by the elements $$S=\begin{pmatrix}0&-1\\1&0\end{pmatrix}\quad\textup{and}\quad T=\begin{pmatrix}1&1\\0&1\end{pmatrix}$$ and a fundamental domain is drawn in figure \ref{fundamentaldomainpls2r}. Alternatively, $\textup{PSL}_2(\mathbb Z)$ is generated by $S$ and $TS$. Those two are very particular elements of $\textup{PSL}_2(\mathbb Z)$, since $S$ is a hyperbolic rotation of angle $\pi$ and $TS$ is a hyperbolic rotation of angle $-\frac{2\pi}3$, whose centers are corners of the fundamental domain drawn in figure \ref{fundamentaldomainpls2r}, respectively $i$ and $e^{i\pi/3}$ in the half-plane model.

With figure \ref{fundamentaldomainpls2r}, the following statement becomes quite straightforward.

\begin{prop}\label{Buhyperbolicstructure}The $3$-punctured sphere $(\mathbb P^1)^u$ is homeomorphic to the quotient $\Gamma(2)\backslash\mathbb H^2_{\mathbb R}$, which is a hyperbolic surface with $3$ cusps. A presentation of $\Gamma(2)$ is $\left<T_\infty,T_0,T_1\mid T_\infty T_0T_1=1\right>$ where
$$T_\infty=(TS)^0T^2(TS)^{-0}=\begin{pmatrix}1&2\\0&1\end{pmatrix}$$
$$T_1=(TS)^1T^2(TS)^{-1}=\begin{pmatrix}-1&2\\-2&3\end{pmatrix}$$
$$T_0=(TS)^2T^2(TS)^{-2}=\begin{pmatrix}1&0\\-2&1\end{pmatrix}.$$
\end{prop}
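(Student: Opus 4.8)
The idea is to recognise both $(\mathbb P^1)^u$ and $\Gamma(2)\backslash\mathbb H^2_{\mathbb R}$ as the thrice-punctured sphere and to read the presentation of $\Gamma(2)$ off an explicit ideal fundamental polygon. On the one hand, $(\mathbb P^1)^u=Q_4^*$; by Proposition~\ref{crossratioisomorphism} the cross-ratio identifies $Q_4$ with $\mathbb P^1$, and, by the description of $Q_4^*$ inside $Q_4$ recalled afterwards, $(\mathbb P^1)^u$ is $Q_4$ with the three classes of $4$-tuples having a repeated entry removed; normalising three of the four marked points to $0,1,\infty$ sends those three classes to $0,1,\infty\in\mathbb P^1$, so $(\mathbb P^1)^u\cong\mathbb P^1\setminus\{0,1,\infty\}$ is a sphere with three punctures. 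It thus remains to show that $\Gamma(2)\backslash\mathbb H^2_{\mathbb R}$ is such a sphere and that $\Gamma(2)$ has the stated presentation.

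Because every non-trivial element of $\Gamma(2)$ has trace of absolute value $\ge2$ (as already noted), $\Gamma(2)$ is torsion-free and acts freely and properly discontinuously on $\mathbb H^2_{\mathbb R}$, with complete hyperbolic quotient. To produce a fundamental domain I would combine $\PSL_2(\mathbb Z)=\langle S\rangle*\langle TS\rangle$ with the reduction $\PSL_2(\mathbb Z)\to\PSL_2(\mathbb Z/2\mathbb Z)\cong S_3$, under which $S$ maps to a transposition and $TS$ to a $3$-cycle; hence $[\PSL_2(\mathbb Z):\Gamma(2)]=6$, and a suitable union of six $\PSL_2(\mathbb Z)$-translates of the modular domain of figure~\ref{fundamentaldomainpls2r} assembles into the ideal quadrilateral $R=\{z\in\mathbb H^2_{\mathbb R}:\ |\Re z|\le1,\ |z-\tfrac12|\ge\tfrac12,\ |z+\tfrac12|\ge\tfrac12\}$, with ideal vertices $-1,0,1,\infty$ and hyperbolic area $2\pi=6\cdot\tfrac\pi3$. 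Its side-pairings are: the two vertical sides $\Re z=\pm1$ paired by $z\mapsto z+2$, i.e. $T_\infty=\left(\begin{smallmatrix}1&2\\0&1\end{smallmatrix}\right)$, which fixes $\infty$ and sends $-1$ to $1$; the two semicircular sides $|z\mp\tfrac12|=\tfrac12$ paired by $z\mapsto z/(1-2z)$, i.e. $T_0=\left(\begin{smallmatrix}1&0\\-2&1\end{smallmatrix}\right)$, which fixes $0$ and sends $1$ to $-1$. Both are $\equiv I\pmod2$, so lie in $\Gamma(2)$.

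By Poincaré's polygon theorem, $\Gamma(2)$ is generated by $T_\infty,T_0$, and since all four vertices of $R$ are ideal there are no elliptic cycle relations, so $\Gamma(2)=\langle T_\infty,T_0\rangle$ is free of rank $2$, in agreement with the index count. The four vertices form three cusp-classes, $\{\infty\}$, $\{0\}$ and $\{-1,1\}$ (the last glued by $T_\infty$), so $\Gamma(2)\backslash\mathbb H^2_{\mathbb R}$ has genus $0$ with exactly three cusps: it is homeomorphic to $\mathbb P^1\setminus\{0,1,\infty\}$, hence to $(\mathbb P^1)^u$. Setting $T_1:=(T_\infty T_0)^{-1}=\left(\begin{smallmatrix}-1&2\\-2&3\end{smallmatrix}\right)$ — the parabolic fixing the cusp $1$ — gives the presentation $\Gamma(2)=\langle T_\infty,T_0,T_1\mid T_\infty T_0T_1=1\rangle$. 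Finally the expressions $T_\infty=T^2$, $T_1=(TS)T^2(TS)^{-1}$, $T_0=(TS)^2T^2(TS)^{-2}$ follow from a direct $2\times2$ computation; equivalently, writing $R_0=TS$ one gets $T_\infty T_0T_1=(T^2R_0^{-1})^3$ in $\PSL_2(\mathbb Z)$, and $T^2R_0^{-1}$ has trace $\pm1$, hence order $3$, which re-proves the relation.

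The only genuine work is the fundamental-domain step: picking the six coset representatives so that the union of modular translates is exactly $R$, verifying that $T_\infty$ and $T_0$ realise the side-pairings with the correct orientation, and following the cusp cycle carefully enough to get the Poincaré relation in the exact form $T_\infty T_0T_1=1$ (rather than a permuted or inverted version), after which identification with the matrices in the statement is routine. That $\langle T_\infty,T_0,T_1\rangle$ equals $\Gamma(2)$, and not merely a subgroup, is then immediate from the index being $6$.
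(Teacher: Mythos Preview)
Your proof is correct and follows the same route the paper intends: the paper states just before the proposition that ``with figure~\ref{fundamentaldomainpls2r}, the following statement becomes quite straightforward'' and gives no further argument, so its proof is really the picture of the $\PSL_2(\mathbb Z)$ fundamental domain together with the dashed ideal triangle with ideal vertices labelled $\infty,0,1$. You have simply made that picture rigorous --- computing the index $6$, assembling six modular translates into the explicit ideal quadrilateral $R$ with vertices $-1,0,1,\infty$, exhibiting the side-pairings $T_\infty$ and $T_0$, and invoking Poincar\'e's polygon theorem to obtain the free presentation and the three cusp classes. The extra observation that $T_\infty T_0 T_1=(T^2(TS)^{-1})^3$ with $T^2(TS)^{-1}$ of order $3$ in $\PSL_2(\mathbb Z)$ is a nice independent check of the relation. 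There is no genuine difference in strategy; you fill in exactly the details the paper leaves to the figure.
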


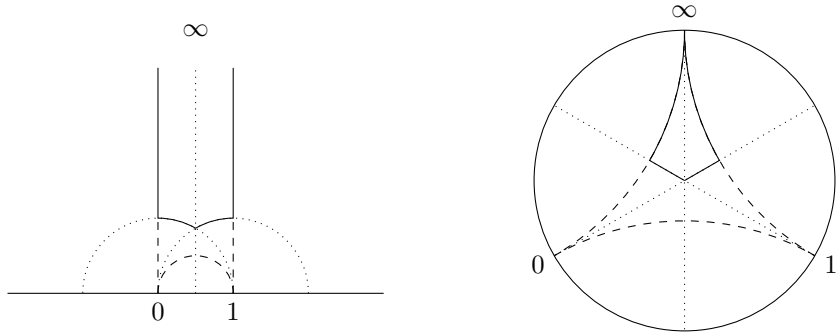
\begin{figure}[htb]
  \centering
  \begin{tikzpicture}
    \begin{scope}[shift={(-4,0)}]
      \draw (-2,0) -- (3,0) ;
      \foreach \x in {0,1} {
        \draw [dashed] (\x,0) -- (\x,1) ;
        \draw [dotted] ({\x+1},0) arc [radius=1, start angle=0, end angle=180] ;
      }
      \draw [dotted] (.5,0) -- (.5,3) ;
      \draw [dashed] (0,0) arc [radius=.5, start angle=180, end angle=0] ;
      \foreach \x in {0,1} \node at (\x,-.25) {$\x$} ;
      \node at (.5,3.5) {$\infty$} ;
      \draw (0,3) -- (0,1) arc [radius = 1, start angle = 90, end angle = 60] arc [radius = 1, start angle = 120, end angle = 90] -- (1,3) ;
    \end{scope}
    \begin{scope}[shift={(3,1.5)}, scale = 2]
      \draw (0,0) circle [radius = 1] ;
      \foreach \i/\t in {0/\infty,1/0,2/1} {
        \begin{scope}[rotate around={{120*\i}:(0,0)}]
          \node at (0,1.125) {$\t$} ;
          \draw [dashed] (0,1) arc [radius = {sqrt(3)}, start angle = 0, end angle = -60] ;
          \draw [dotted] (0,1) -- (0,-1) ;
        \end{scope}
      }
      \draw (0,1) arc [radius = {sqrt(3)}, start angle = 180, end angle = 210] -- (0,0) ;
      \draw (0,1) arc [radius = {sqrt(3)}, start angle =   0, end angle = -30] -- (0,0) ;
    \end{scope}
  \end{tikzpicture}
  \caption{A fundamental domain of $\textup{PSL}_2(\mathbb Z)$ in the half-plane and disk models of hyperbolic plane. The skeleton of an ideal triangle drawn with dashes.}
  \label{fundamentaldomainpls2r}
\end{figure}

This diagram sums up the facts presented above.
$$\xymatrix{
&\Gamma(2)\ar@{<->}[ddl]_{\textup{isomorphism (\ref{mod04gamma2})}}\ar@{<->}[ddr]^{\textup{uniformization (\ref{Buhyperbolicstructure})}}&\\
&&\\
\Mod_{0,4}&&\pi_1((\mathbb P^1)^u)\ar[ll]_{\textup{monodromy}}\\
&&\\
&\pi_1(Q_4^*)\ar[uul]^{\textup{Birman (\ref{birmanexactsequence})}}\ar[uur]_{\textup{cross ratio (\ref{crossratioisomorphism})}}&
}$$

Finally, the monodromy morphism $\pi_1((\mathbb P^1)^u)\to\Mod_{0,4}$ is an isomorphism and it is quite elementary to determine whether the monodromy of a loop is pseudo-Anosov by calculating the trace of the corresponding element of $\Gamma(2)$. Such an element may be given\begin{enumerate}
  \item either in the form of a matrix, with the advantage of being able to compute its trace easily,
  \item or as a product of the generators $T_\infty$, $T_0$, $T_1$, which allows to read that element of $\Gamma(2)$ as a loop in the sphere with three punctures.
\end{enumerate}
However, if an element of $\Gamma(2)$ is given in the latter form, rather than as a matrix, there is no direct method for calcutating either its entries or its trace other than computing the product.

\section{A Lefschetz fibration of the Hirzebruch's surface}\label{sectionfibrationhirzebruch}

Composing $\sigma:Y_1\to\widehat{\mathbb P^2}$ and $f:\widehat{\mathbb P^2}\to\mathbb P^1$ yields a fibration $f\circ\sigma:Y_1\to\mathbb P^1$.

Let $p$ be one of the four triple intersection points of the arrangement of lines in $\mathbb P^2$, and $q$ be one of the $5^2$ points of $X$ over $p$.
Let $C$ be the connected curve in $Y_1$ obtained by resolving the singular point $q$ in $X$.
By Lemma \ref{resolutionexceptionaldivisor}, the Euler characteristic of $C$ is (since $n=5$ and $m=3$ in the Lemma)
$$
e(C)=5^{3-1}(2-3)+3\cdot5^{3-2}=-10
$$so that $C$ is a smooth curve of genus $6$.
The restriction $\sigma_{|C}:C\to\mathbb P(\textup T_p\mathbb P^2)$ is a branched covering map of degree $5^2$ which ramifies over the points in $\mathbb P(\textup T_p\mathbb P^2)$ corresponding to the lines of the arrangement passing through $p$. The exact same goes for $f\circ\sigma_{|C}:C\to\mathbb P^1$, since $f_{|\mathbb P(\textup T_p\mathbb P^2)}:\mathbb P(\textup T_p\mathbb P^2)\to\mathbb P^1$ is an isomorphism. 
$$
\xymatrix{C\ar@{^(->}@<-.25ex>[r]\ar@{->>}[d]_{\sigma_{|C}}&Y_1\ar@{->>}[d]_\sigma\ar@(dr,ur)@{->>}[dd]^{f\circ\sigma}\\\mathbb P(\textup T_p\mathbb P^2)\ar@{^(->}@<-.25ex>[r]&{\widehat{\mathbb P^2}}\ar@{->>}[d]_f\\&{\mathbb P^1}\ar@{<->}[ul]}
$$

As well as the fibration $f:\widehat{\mathbb P^2}\to\mathbb P^1$ admits natural sections
$$\mathbb P^1\to\mathbb P(\textup T_p\mathbb P^2)\subset\widehat{\mathbb P^2},$$
one may want to show that the inclusion $C\to Y_1$ is a section of a fibration $Y_1\to C$. Indeed, the following proposition states that there exists a fibration $Y_1\to C$ with connected fibers. In other words, its composition with the branched covering map $f\circ\sigma_{|C}:C\to\mathbb P^1$ is the Stein factorization of $f\circ\sigma:Y_1\to\mathbb P^1$.

\begin{prop}\label{steinfactorization}
There exists a fibration $Y_1\to C$ with connected fibers, such that the inclusion $C\to Y_1$ is a section and that the following diagram is commutative.$$\xymatrix{&Y_1\ar@{->>}[dd]^{f\circ\sigma}\ar@{->>}@<.5ex>[dl]\\C\ar@{^{(}->}@<.5ex>[ur]\ar@{->>}[dr]_{f\circ\sigma_{|C}}&\\&{\mathbb P^1}}$$

The curve $C$ is of genus $6$ and the generic fiber under $Y_1\to C$ is a smooth curve of genus $76$. The singular fibers under $Y_1\to C$ lie over the points of $C$ over which the branched covering map $f\circ\sigma_{|C}:C\to\mathbb P^1$ is ramified, so that there are $3\times 5$ such fibers.
\end{prop}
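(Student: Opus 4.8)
The plan is to produce $Y_1\to C$ as the Stein factorization of $f\circ\sigma\colon Y_1\to\mathbb P^1$. Since $Y_1$ is a smooth projective surface and $f\circ\sigma$ is a projective morphism, I would take its Stein factorization $Y_1\xrightarrow{\pi}C'\xrightarrow{h}\mathbb P^1$: here $\pi$ has connected fibres with $\pi_*\mathcal O_{Y_1}=\mathcal O_{C'}$, and $h$ is finite. Normality of $Y_1$ forces $C'$ to be a normal — hence smooth — curve, and it is projective since $h$ is finite over $\mathbb P^1$. As $\pi$ automatically has connected fibres, what remains is to identify $C'$ with $C$ so that $C\hookrightarrow Y_1$ becomes a section, and then to compute the genus of a general fibre and count the singular ones.

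For the identification, write $\iota\colon C\hookrightarrow Y_1$ and $\psi:=\pi\circ\iota\colon C\to C'$. Commutativity of the Stein factorization gives $h\circ\psi=(f\circ\sigma)|_C=f|_{\mathbb P(\textup T_p\mathbb P^2)}\circ\sigma|_C$, which has degree $5^2$ because $f|_{\mathbb P(\textup T_p\mathbb P^2)}$ is an isomorphism and $\deg(\sigma|_C)=5^2$ by Lemma \ref{resolutionexceptionaldivisor} (with $m=3$, $n=5$). Hence it suffices to prove $\deg h=5^2$; then $\deg\psi=1$, and a degree-one morphism between smooth projective curves is an isomorphism. Transporting $\pi$ through $\psi$ then produces $Y_1\to C$, and the relation $h\circ\psi=f\circ\sigma|_C$ makes $\iota$ a section of it and makes the triangle commute. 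That $C$ has genus $6$ is already recorded from Lemma \ref{resolutionexceptionaldivisor}.

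To see $\deg h=5^2$, I would note that $\deg h$ equals the number of connected components of a general fibre $\sigma^{-1}(F)$, where $F=f^{-1}(t)\cong\mathbb P^1$ is the proper transform of a general conic of the pencil. Each line of the arrangement passes through exactly two of the four base points, so by Bézout a conic through all four meets that line only there, and hence the proper transform $F$ is disjoint from the proper transforms of the six lines; therefore $\sigma^{-1}(F)\to F$ is the restriction of the Galois branched covering $\sigma$, of group $G=\Aut(\sigma)\cong(\mathbb Z/5)^5$, branched exactly over the four points $F\cap\mathbb P(\textup T_{p_i}\mathbb P^2)$, and by Lemma \ref{automorphismsofsigma} the monodromy around the $i$-th of these sends it to $\beta_i:=\prod_{D\ni p_i}\alpha_D$. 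Thus $\deg h=[G:\langle\beta_1,\dots,\beta_4\rangle]$. Writing the $\alpha_D$ as the standard generators of $(\mathbb Z/5)^6$ modulo the single relation $\sum_D\alpha_D=0$ (Remark \ref{automorphismsalphaD}), a short computation over $\mathbb F_5$ gives $\beta_1+\beta_2+\beta_3+\beta_4=0$ while any three of the $\beta_i$ are linearly independent, so $\langle\beta_1,\dots,\beta_4\rangle$ has order $5^3$ and $\deg h=5^5/5^3=5^2$. For the genus of a general fibre: such a fibre is a single component $\mathcal C$ of $\sigma^{-1}(F)$, a connected Galois cover of $\mathbb P^1$ of group $\langle\beta_1,\dots,\beta_4\rangle$ (order $5^3$), branched over four points with inertia group at each generated by a $\beta_i$ (order $5$, hence $5^2$ preimages), so Riemann–Hurwitz gives $2g(\mathcal C)-2=5^3(-2)+4\cdot5^2\cdot4=150$, i.e. $g(\mathcal C)=76$.

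It remains to locate the singular fibres. When $F_s=f^{-1}(s)$ is a smooth conic, it meets the branch divisor of $\sigma$ transversally (only at the four exceptional-curve points), so $\sigma^{-1}(F_s)$ is smooth and all of its components are genus-$76$ curves; hence a singular fibre of $Y_1\to C$ can only sit over a point $c$ with $h(c)$ one of the three singular values of $f$. Under $\psi$ the map $h$ is $f|_{\mathbb P(\textup T_p\mathbb P^2)}\circ\sigma|_C$, and $\sigma|_C\colon C\to\mathbb P(\textup T_p\mathbb P^2)$ is branched exactly over the three points corresponding to the lines of the arrangement through $p$, each with ramification index $5$ (the image of the relevant $\alpha_D$ in $\Aut(\sigma|_C)=\Stab_{\Aut(\chi)}(q)/\langle\prod_{D\ni p}\alpha_D\rangle$ has order $5$), so there are five preimages over each; and these three points of $\mathbb P(\textup T_p\mathbb P^2)$ are precisely the three singular values of $f$ under the isomorphism $f|_{\mathbb P(\textup T_p\mathbb P^2)}$. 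Thus $h$ is ramified over exactly those three values, with five preimages apiece, giving the $3\times5$ singular fibres. The heart of the argument — and the only step that is not a general fact about Stein factorizations or Riemann–Hurwitz — is establishing $\deg h=5^2$, i.e. that a general fibre of $f\circ\sigma$ has exactly $5^2$ connected components; this is where the monodromy description of Lemma \ref{automorphismsofsigma} must be fed into the $\mathbb F_5$-linear-algebra computation of the index of $\langle\beta_1,\dots,\beta_4\rangle$ in $\Aut(\sigma)$.
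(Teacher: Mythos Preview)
Your argument is correct. The core computation --- that the monodromy image $\langle\beta_1,\dots,\beta_4\rangle$ has index $5^2$ in $\Aut(\sigma)$ --- is the same one the paper carries out, but the two proofs are organized differently. The paper works concretely: for each $b\in\mathbb P^1$ (treating the generic and singular cases separately) it shows by a coset argument, namely $\Aut(\sigma)=\Stab_{\Aut(\chi)}(q)\cdot\Aut(\sigma_{|Z})$ together with $\Stab_{\Aut(\chi)}(q)\cap\Aut(\sigma_{|Z})=\langle\prod_{D\ni p}\alpha_D\rangle$, that every connected component of $(f\circ\sigma)^{-1}(b)$ meets $C$ in exactly one point, and then defines $Y_1\to C$ by sending each component to that point. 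You instead invoke the abstract Stein factorization $Y_1\to C'\to\mathbb P^1$ and identify $C'$ with $C$ by comparing degrees, which lets you bypass the separate analysis of the singular fibres for the construction step. Your route is cleaner and more conceptual; the paper's route is more explicit about why $C$ sits transversally to every fibre, which feeds directly into the later description of the singular fibres. Both arrive at the genus-$76$ count by the same Riemann--Hurwitz computation.
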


The following proof resorts implicitly and repeatedly to proposition \ref{coverbasics}.

\begin{proof}
For any point $b$ in $\mathbb P^1$, $f^{-1}(b)$ is the proper transform in $\widehat{\mathbb P^2}$ of a conic in $\mathbb P^2$. $f^{-1}(b)$ and $\mathbb P(\textup T_p\mathbb P^2)$ meet at a single point, denoted by $b_p$.

If $b$ is not one of the three points for which $f^{-1}(b)$ is singular, then $f^{-1}(b)$ does not intersect (the proper transforms of) the lines of the arrangement but intersects the four exceptional curves $\mathbb P(\textup T_{p'}\mathbb P^2)$. Since they intersect normally, $(f\circ\sigma)^{-1}(b)$ is smooth and the restriction $\sigma_{|(f\circ\sigma)^{-1}(b)}:(f\circ\sigma)^{-1}(b)\to f^{-1}(b)$ is a Galois branched covering map which ramifies exactly over the intersection of $f^{-1}(b)$ with the four exceptional curves.\\
Let $Z$ be a connected component of $(f\circ\sigma)^{-1}(b)$. Consider the branched covering map $\sigma_{|Z}:Z\to f^{-1}(b)$ and the corresponding unbranched one ${\sigma_{|Z}}^u:Z^u\to f^{-1}(b)^u$. $Z^u$ (obtained from $Z$ by removing the branch points) is still connected. Hence, given any base point $z\in Z^u$, the Galois group $\Aut(\sigma_{|Z})$ is naturally isomorphic to the image subgroup of $\alpha_z:\pi_1(f^{-1}(b)^u,\sigma(z))\to\Aut(\sigma)$. Since $f^{-1}(b)^u$ is homeomorphic to a sphere with four punctures, the fundamental group $\pi_1(f^{-1}(b)^u,\sigma(z))$ is generated by the homotopy classes of four loops around the punctures (three are actually enough). The subgroup $\im\alpha_z$ is hence generated (see \ref{automorphismsofsigma}) by (any three among) the four elements$$\prod_{D\ni p'}\alpha_D.$$
Besides, $\Stab_{\Aut(\chi)}(q)$ is generated (see \ref{automorphismsofsigma}) by the automorphisms $\alpha_D$, for the lines $D$ passing through $p$. It appears, on the one hand, that $\Stab_{\Aut(\chi)}(q)\cap\Aut(\sigma_{|Z})$ is the cyclic subgroup generated by $\prod_{D\ni p}\alpha_D$ which acts trivially on $C$ and, on the other hand, that $\Stab_{\Aut(\chi)}(q)\Aut(\sigma_{|Z})=\Aut(\sigma)$.\\
As $b_p$ belongs to $f^{-1}(b)$, $Z\cap\sigma^{-1}(b_p)$ is not empty. Let $z$ be a point in the latter set and let $\alpha$ be an automorphism of $\sigma$ such that $\alpha(z)\in C$. Since $\Aut(\sigma)=\Stab_{\Aut(\chi)}(q)+\Aut(\sigma_{|Z})$, $\alpha$ may actually be chosen in $\Aut(\sigma_{|Z})$, so that $\alpha(z)\in Z\cap C$. And since $\Stab_{\Aut(\chi)}(q)\cap\Aut(\sigma_{|Z})$ acts trivially on $C$, $Z\cap C$ contains exactly one point.

If $b$ is one of the three points for which $f^{-1}(b)$ is singular, $f^{-1}(b)$ is more precisely the union of (the proper tranforms of) two lines of the arrangement, say $D_{12}$ and $D_{34}$, the former passing through triple intersection points denoted by $p_1$ and $p_2$ and the latter through $p_3$ and $p_4$. By a slight abuse of notations, the proper transforms, denoted by $D_{12}$ and $D_{34}$, intersect at a point $p_5$ and each of them also intersects two of the exceptional curves, the former at $p_1$ and $p_2$, the latter at $p_3$ and $p_4$. Since the intersections are normal, $\sigma^{-1}(D_{12})$ is smooth and the restriction $\sigma:\sigma^{-1}(D_{12})\to D_{12}$ is a Galois branched covering map of degree $5^4$ ramified over $p_1$, $p_2$, $p_5$, with index $5$. The exact same goes for $\sigma^{-1}(D_{34})$ over $p_3$, $p_4$, $p_5$.\\
Furthermore, if $Z_{12}$ is a connected component of $\sigma^{-1}(D_{12})$, then $\Aut(\sigma_{|Z_{12}})$ is naturally isomorphic to the subgroup of $\Aut(\sigma)$ generated by$$\alpha_{D_{34}}\qquad\prod_{D\ni p_1}\alpha_D\qquad\prod_{D\ni p_2}\alpha_D$$and if $Z_{34}$ is a connected component of $\sigma^{-1}(D_{34})$, then $\Aut(\sigma_{|Z_{34}})$ is naturally isomorphic to the subgroup of $\Aut(\sigma)$ generated by$$\alpha_{D_{12}}\qquad\prod_{D\ni p_3}\alpha_D\qquad\prod_{D\ni p_4}\alpha_D.$$
Therefore, the subgroup of $\Aut(\sigma)$, denoted by $H$, preserving the connected components of $\sigma^{-1}(D_{12}\cup D_{34})$ is generated by $\alpha_{D_{12}}$, $\alpha_{D_{34}}$ and the four elements$$\prod_{D\ni p'}\alpha_D$$with $p'\in\{p_1,p_2,p_3,p_4\}$.\\
Let $Z$ be a connected component of $\sigma^{-1}(D_{12}\cup D_{34})$ and let $z$ be a point in $Z$ such that $\sigma(z)=b_p$. Assuming that $p=p_1$, $b_p$ is then the point in $\mathbb P(\textup T_p\mathbb P^2)$ corresponding to the direction tangent to $D_{12}$. In particular, $\alpha_{D_{12}}(z)=z$ since $\sigma(z)=b_p$. Let $\alpha$ be an automorphism of $\sigma$ such that $\alpha(z)\in C$. Since $\Aut(\sigma)=\Stab_{\Aut(\chi)}(q)+H$, $\alpha$ may actually be chosen in $H$, so that $\alpha(z)\in Z\cap C$. And since $\Stab_{\Aut(\chi)}(q)\cap H$ acts trivially on $z$, $Z\cap C$ contains exactly one point.

In conclusion, each connected component of $(f\circ\sigma)^{-1}(b)$ meets $C$ at exactly one point and one can define a fibration $Y_1\to C$ by mapping any connected component of $(f\circ\sigma)^{-1}(b)$ to the only point in its intersection with $C$. This fibration is nothing but the Stein factorization of $f\circ\sigma$, since the fibers of $Y_1\to C$ are exactly the connected components of those of $f\circ\sigma:Y_1\to\mathbb P^1$.

As $f\circ\sigma_{|C}:C\to\mathbb P^1$ is a branched covering map of degree $5^2$, a generic fiber $(f\circ\sigma)^{-1}(b)$ has then $5^2$ connected components and total Euler characteristic$$5^5(2-4)+5^4\times4=-6\times5^4$$so that each connected component has Euler characteristic $-6\times5^2$ and genus $1+3\times5^2=76$.\end{proof}

\begin{rmk}The fibration $Y_1\to C$ seems combinatorially complex since the base curve is of genus $6$ with $15$ ramification points and the generic fiber is of genus $76$ with $4\times5^2$ marked points lying over the $4$ marked points of the generic fiber of the pencil of conics. For instance, writing group presentations of fundamental groups of these spaces or of their corresponding mapping class groups is a laborious task.

However, recall that the much simpler manifold $\widehat{\mathbb P^2}$ bears an orbifold structure that is the quotient of the complex hyperbolic manifold $Y_1$. The fibration $Y_1\to C$ itself arises from a fibration of $\widehat{\mathbb P^2}$. The base curve is a sphere with $3$ punctures and the generic fiber is a conic with $4$ marked points. The mapping class group of the generic fiber is much simpler than the mapping class group of a surface of higher genus, which makes the monodromy potentially simpler.\end{rmk}

In the remainder of the present section, the base curve and the generic and singular fibers are studied in more detail.

\begin{nota}\label{basepointsonceandforall}In the following, fundamental groups of the spaces at play will be considered quite often. In order to avoid choosing base points each time, one should choose them once and for all. Let $y_0$ be a base point in ${Y_1}^u$ which will also serve as a base point of $Y_1$. Let $c_0$ denote the projection of $y_0$ to $C$ so that $c_0$ will be the base point of both $C$ and $C^u$. Besides, the fiber over $c_0$ of $Y_1\to C$ will be denoted by $F_0$ and will be called the base fiber. The point $y_0$ belongs to $F_0$ and will be its base point. One may obtain base points similarly for $\widehat{\mathbb P^2}$, $\widehat{\mathbb P^2}^u$, $\mathbb P(\textup T_p\mathbb P^2)$, $\mathbb P(\textup T_p\mathbb P^2)^u$, $\mathbb P^1$ and $(\mathbb P^1)^u$.\end{nota}

\begin{figure}[ht]
  \centering
  \begin{tikzpicture}
    \draw[gray] (4,0) -- (-4,0);
    \draw[blue,thick] (-2,{-2*sqrt(3)}) -- (2,{2*sqrt(3)});
    \draw[gray] (-2,{2*sqrt(3)}) -- (2,{-2*sqrt(3)});
    \draw[purple,thick] ({2+sqrt(3)},-1) -- ({-1-sqrt(3)},{sqrt(3)+1});
    \draw[gray] ({2+sqrt(3)},1) -- ({-1-sqrt(3)},{-sqrt(3)-1});
    \draw[gray] (-1,{sqrt(15)}) -- (-1,{-sqrt(15)});
    \fill (0,0) circle [radius=0.05];
    \fill (2,0) circle [radius=0.05];
    \fill (-1,{sqrt(3)}) circle [radius=0.05];
    \fill (-1,{-sqrt(3)}) circle [radius=0.05];
  \end{tikzpicture}
  \caption{The two irreducible components of a singular conic.}
\end{figure}
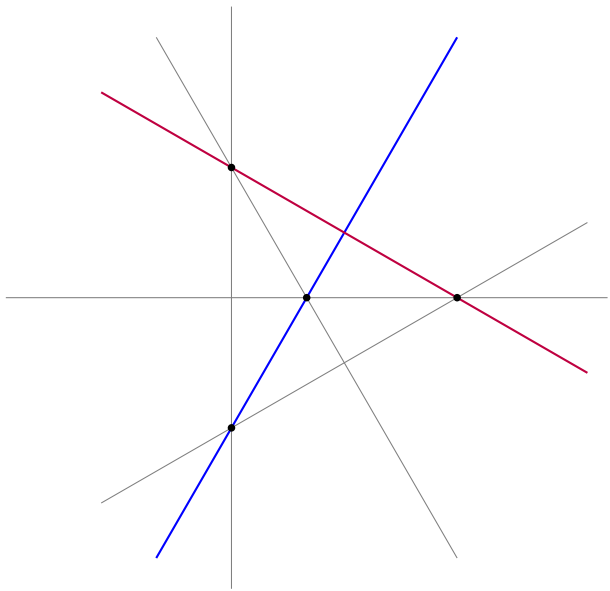

\begin{cor}\label{statementsingularfibers}The $15$ singular fibers under $Y_1\to C$ are isomorphic to$$(S_{12}\times I_{34})\cup(I_{12}\times S_{34})$$where $S_{12}$ and $S_{34}$ are connected components of $\sigma^{-1}(D_{12})$ and $\sigma^{-1}(D_{34})$ respectively and $I_{12}$ and $I_{34}$ are the subsets of $S_{12}$ and $S_{34}$ respectively whose points lie over the intersection point of $D_{12}$ and $D_{34}$.

$S_{12}$ is a compact curve of genus $6$ and a Galois branched covering space of $D_{12}$, of degree $5^2$, ramified over three points and $I_{12}$ consists of $5$ points. The exact same goes for $S_{34}$ and $I_{34}$.\end{cor}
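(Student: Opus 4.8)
The plan is to read off the singular fibres directly from the proof of Proposition~\ref{steinfactorization}. There, a singular value $b$ of $f:\widehat{\mathbb P^2}\to\mathbb P^1$ satisfies $f^{-1}(b)=D_{12}\cup D_{34}$, so $(f\circ\sigma)^{-1}(b)=\sigma^{-1}(D_{12})\cup\sigma^{-1}(D_{34})$, the two pieces being glued along $\sigma^{-1}(p_5)$ with $p_5=D_{12}\cap D_{34}$; its connected components are precisely the singular fibres of $Y_1\to C$, five above each of the three singular values, hence $15$ in all. So the task reduces to analysing one such component $Z$.

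First I would recall the pieces. From the proof of Proposition~\ref{steinfactorization}, $\sigma^{-1}(D_{12})$ is smooth and $\sigma$ restricts to it as a Galois branched cover of $D_{12}\cong\mathbb P^1$ of degree $5^4$, ramified with index $5$ over $p_1,p_2,p_5$; the automorphism $\alpha_{D_{12}}$ acts trivially on every connected component, and the stabiliser computation there (via Lemma~\ref{automorphismsofsigma}) shows a component $S_{12}$ is a Galois cover of $D_{12}$ of degree $5^2$ branched over those three points. Riemann--Hurwitz gives $e(S_{12})=2\cdot5^2-3\cdot5\cdot4=-10$, so $S_{12}$ has genus $6$, and $I_{12}=S_{12}\cap\sigma^{-1}(p_5)$ has $5^2/5=5$ points; hence $\sigma^{-1}(D_{12})$ has $5^4/5^2=25$ connected components, and the same for $D_{34}$.

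Next I would pin down the local and combinatorial picture. Only $D_{12}$ and $D_{34}$ meet at $p_5$, so $\tau$ does not blow up $p_5$ and, in the coordinates of Remark~\ref{lescartes} with $D_{12}=\{w_1=0\}$ and $D_{34}=\{w_2=0\}$, the surface is locally $\{v_1^5=w_1,\ v_2^5=w_2\}$; hence $\sigma^{-1}(D_{12}\cup D_{34})$ is, near each point over $p_5$, the transverse union $\{v_1v_2=0\}$ of a branch mapping to $D_{12}$ and one mapping to $D_{34}$. Thus $\sigma^{-1}(D_{12}\cup D_{34})$ is a nodal curve whose nodes are exactly the $5^{6-1-2}=5^3$ points over $p_5$, each lying on one $S_{12}$-component and one $S_{34}$-component. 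Counting: the $25+25$ components and $5^3$ nodes are distributed among the five fibres $Z$, and since $\Aut(\sigma)$ permutes these five transitively (with stabiliser $H$ of index $5$ computed in the proof of Proposition~\ref{steinfactorization}), each $Z$ carries $10$ components, five of each type and all isomorphic to $S_{12}$, resp.\ $S_{34}$, because $\Aut(\sigma)$ is transitive on the components of each $\sigma^{-1}(D_{ij})$, together with $25$ nodes.

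The hard part will be the precise incidence: each $S_{12}$-component of $Z$ must meet each $S_{34}$-component of $Z$ in exactly one node. As $Z$ has $25$ nodes and $5\times5$ pairs, it suffices to rule out two nodes on the same pair. If $z,z'$ over $p_5$ both lie on a given $S_{12}$ and a given $S_{34}$, then $g(z)=z'$ for some $g\in\Aut(\sigma)$ stabilising both components, so $g\in\Stab(S_{12})\cap\Stab(S_{34})$; but using Lemma~\ref{automorphismsofsigma} and the relation $\prod_D\alpha_D=1$ one computes this intersection to be $\langle\alpha_{D_{12}},\alpha_{D_{34}}\rangle$, which is exactly the inertia group, hence the $\Aut(\sigma)$-stabiliser, of $z$, forcing $z'=z$. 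So the dual graph of $Z$ is the complete bipartite graph on its five $S_{12}$- and five $S_{34}$-components; identifying the components with $S_{12}$ and $S_{34}$ compatibly, the node on the copy indexed by $\kappa\in I_{34}$ and the copy indexed by $\iota\in I_{12}$ is labelled $(\iota,\kappa)$, which yields $Z\cong(S_{12}\times I_{34})\cup(I_{12}\times S_{34})$. The only routine point left is that the gluing at the $25$ nodes is the product one, which follows from the transitivity of $\Aut(\sigma)$ on the relevant configurations.
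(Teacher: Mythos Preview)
Your proposal is correct and follows essentially the same approach as the paper: both compute the genus of $S_{12}$ via an Euler-characteristic count, both identify the stabilisers of the components $S_{12}$ and $S_{34}$ inside $\Aut(\sigma)$ using Lemma~\ref{automorphismsofsigma}, and both establish the key incidence (that a chosen $S_{12}$ meets a chosen $S_{34}$ in exactly one point) by showing the intersection of these stabilisers is $\langle\alpha_{D_{12}},\alpha_{D_{34}}\rangle$, which fixes any point over $p_5$. Your write-up reorganises the argument slightly---counting components and nodes first and then reading off the complete bipartite dual graph---but the substance is identical to the paper's proof.
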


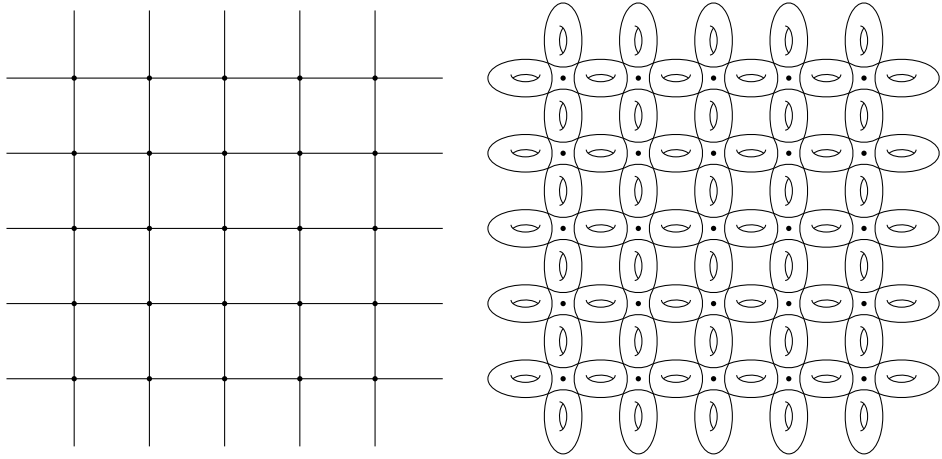
\begin{figure}[ht]
  \centering
  \begin{tikzpicture}
    \begin{scope}
      \draw [step=1] (0.1,0.1) grid (5.9,5.9) ;
      \foreach \x in {1,...,5} \foreach \y in {1,...,5} \fill (\x,\y) circle (1pt) ;
    \end{scope}
    \begin{scope}[shift={(6.5,0)}]
      \foreach \x in {1,...,5} {
        \foreach \y in {1,...,5} \fill (\x,\y) circle (1pt) ;
        \foreach \angle in {0,-90} {
          \begin{scope}[rotate around={\angle:(3,3)},shift={(\x,0)},xscale=1/2]
            \draw (0,0)
              arc [radius=1/2, start angle=-90, end angle=45]
              arc [radius={(sqrt(2)-1)/2}, start angle=-135, end angle=-225]
              arc [radius=1/2, start angle=-45, end angle=45]
              arc [radius={(sqrt(2)-1)/2}, start angle=-135, end angle=-225]
              arc [radius=1/2, start angle=-45, end angle=45]
              arc [radius={(sqrt(2)-1)/2}, start angle=-135, end angle=-225]
              arc [radius=1/2, start angle=-45, end angle=45]
              arc [radius={(sqrt(2)-1)/2}, start angle=-135, end angle=-225]
              arc [radius=1/2, start angle=-45, end angle=45]
              arc [radius={(sqrt(2)-1)/2}, start angle=-135, end angle=-225]
              arc [radius=1/2, start angle=-45, end angle=225]
              arc [radius={(sqrt(2)-1)/2}, start angle=45, end angle=-45]
              arc [radius=1/2, start angle=135, end angle=225]
              arc [radius={(sqrt(2)-1)/2}, start angle=45, end angle=-45]
              arc [radius=1/2, start angle=135, end angle=225]
              arc [radius={(sqrt(2)-1)/2}, start angle=45, end angle=-45]
              arc [radius=1/2, start angle=135, end angle=225]
              arc [radius={(sqrt(2)-1)/2}, start angle=45, end angle=-45]
              arc [radius=1/2, start angle=135, end angle=225]
              arc [radius={(sqrt(2)-1)/2}, start angle=45, end angle=-45]
              arc [radius=1/2, start angle=135, end angle=270] ;
            \foreach \y in {0,...,5} {
              \begin{scope}[shift={(0,\y)}]
                \draw (0,{2/3}) arc [radius={1/3/sqrt(3)}, start angle =120, end angle =240 ] ;
                \draw ({-1/6/sqrt(3)},{1/2+1/3/sqrt(3)}) arc [radius={1/3/sqrt(3)}, start angle =90, end angle =-90 ] ;
              \end{scope}
            }
          \end{scope}
        }
      }
    \end{scope}
  \end{tikzpicture}
  \caption{Two representations of the shape of the $15$ singulars fibers: one on the left where the irreducible components are symbolically represented as line segments, one on the right where the irreducible components are more realistic whereas their intersection points are marked as thick dots.}
  \label{figuresingularfibers}
\end{figure}

\begin{proof}Let $D_{12}$ and $D_{34}$ denote the two irreducible components of a singular fiber of $\widehat{\mathbb P^2}\to\mathbb P^1$. Then $\sigma^{-1}(D_{12})$ is a Galois branched covering space of $D_{12}$ of degree $5^{5-1}=5^4$ and ramified over $3$ points with ramification index $5$ (one is the point where $D_{12}$ and $D_{34}$ intersect and the other two are points where $D_{12}$ intersects two of the four exceptional curves). Hence the Euler characteristic of $\sigma^{-1}(D_{12})$ is$$5^4(2-3)+3\times 5^3=-2\times 5^3.$$Let $S_{12}$ be a connected component of $\sigma^{-1}(D_{12})$. The Galois group $\Aut(\sigma_{|S_{12}})$ is isomorphic to the quotient of the subgroup of $\Aut(\sigma)$ generated by the three elements (two are actually enough)$$\alpha_{34},\quad\alpha_{12}\alpha_{13}\alpha_{23},\quad\alpha_{12}\alpha_{14}\alpha_{24}$$by $\left<\alpha_{12}\right>$. The fiber has then $5^2$ connected components, so that each of them has Euler characteristic $-2\times 5$ and genus $1+5=6$. The same goes for the connected components of $\sigma^{-1}(D_{34})$. Let $S_{34}$ be one of them and assume that it meets $S_{12}$ at a point $q$. The Galois group $\Aut(\sigma_{|S_{34}})$ is isomorphic to the quotient of the subgroup of $\Aut(\sigma)$ generated by the elements$$\alpha_{12},\quad\alpha_{34}\alpha_{13}\alpha_{14},\quad\alpha_{34}\alpha_{23}\alpha_{24}$$by $\left<\alpha_{34}\right>$. Since the intersection of the subgroups$$\left<\alpha_{34},\alpha_{12}\alpha_{13}\alpha_{23},\alpha_{12}\alpha_{14}\alpha_{24}\right>\quad\textup{and}\quad\left<\alpha_{12},\alpha_{34}\alpha_{13}\alpha_{14},\alpha_{34}\alpha_{23}\alpha_{24}\right>$$is $\left<\alpha_{12},\alpha_{34}\right>$ which acts trivially on the point $q$, $S_{12}$ and $S_{34}$ meet at exactly one point.

The connected component of $\sigma^{-1}(D_{12}\cup D_{34})$ containing $q$ is the union of the orbit of $S_{12}$ under the action of $\left<\alpha_{34},\alpha_{12}\alpha_{13}\alpha_{23},\alpha_{12}\alpha_{14}\alpha_{24}\right>$ and the orbit of $S_{34}$ under the action of $\left<\alpha_{12},\alpha_{34}\alpha_{13}\alpha_{14},\alpha_{34}\alpha_{23}\alpha_{24}\right>$. These orbits consists of five copies of $S_{12}$ and $S_{34}$ respectively (see figure \ref{figuresingularfibers}).\end{proof}

\begin{rmks}The curves $S_{12}$ and $S_{34}$ are biholomorphic since they are covering spaces of lines of the arrangement which play symmetric roles.

The resolution of the $5^2$ singularities of the singular fibers yields curves of genus $6\times(5+5)+(5-1)^2=76$ (see figure \ref{figuresingularfibers}), which is indeed equal to the genus of the generic fiber.\end{rmks}

The following lemma aims at describing the kernel of a morphism from a free group to a finite abelian group. Consider the topological interpretation of a free group as a fundamental group of a wedge sum of circles. More precisely, the image in the torus $\mathbb R^m/\mathbb Z^m$ of the coordinate axes of $\mathbb R^m$ is a wedge sum of $m$ circles, denoted by $B_m$, with a base point $b$. The fundamental group $\pi_1(B_m,b)$ is indeed a free group with $m$ generators $c_1,\dots,c_m$. The group morphism $\pi_1(B_m,b)\to\mathbb Z^m$ induced by the inclusion $B_m\to\mathbb R^m/\mathbb Z^m$ is nothing but the abelianization morphism, mapping the generator $c_1$ to the element $(1,0,\dots,0)$ and so on.

\begin{lemma}\label{freegroupmorphismkernel}If $R$ is a subgroup of $\mathbb Z^m$ of index $d$ then the torus $\mathbb R^m/R$ is naturally a covering space of $\mathbb R^m/\mathbb Z^m$, of degree $d$. Let $\hat B_m$ denote the covering space of $B_m$, obtained by pulling back $B_m$ as follows.$$\xymatrix{(\hat B_m,\hat b)\ar@{^(->}[r]\ar@{->>}[d]&(\mathbb R^m/R,0)\ar@{->>}[d]\\(B_m,b)\ar@{^(->}[r]&(\mathbb R^m/\mathbb Z^m,0)}$$Then the kernel of the morphism $\pi_1(B_m,b)\to\mathbb Z^m/R$ is isomorphic to the fundamental group $\pi_1(\hat B_m,\hat b)$. Moreover, $\hat B_m$ has the homotopy type of a wedge sum of $d(m-1)+1$ circles.

If $R=k\mathbb Z^m$, then $d=k^m$ and the kernel of $\pi_1(B_m,b)\to(\mathbb Z/k\mathbb Z)^m$ is generated by the elements ${c_1}^k,\dots,{c_m}^k$ and the commutators $[{c_i}^p,{c_j}^q]$ for $1\le i,j\le m$ and $1\le p,q\le k$.\end{lemma}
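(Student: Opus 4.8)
The plan is to prove the two assertions of the lemma by setting up the covering space $\hat B_m \to B_m$ carefully and then computing its homotopy type via Euler characteristic, before specializing to the case $R = k\Z^m$.

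First I would establish the covering-space picture. The torus $\mathbb R^m/R$ is a connected abelian Lie group; the inclusion $R \hookrightarrow \Z^m$ induces a covering map $\mathbb R^m/R \to \mathbb R^m/\Z^m$ whose deck group is $\Z^m/R$, a group of order $d$. Pulling back $B_m \hookrightarrow \mathbb R^m/\Z^m$ along this covering yields a covering $p : \hat B_m \to B_m$ of degree $d$, with a chosen lift $\hat b$ of $b$ lying over $0$; one checks $\hat B_m$ is connected because the composite $\pi_1(B_m,b) \to \pi_1(\mathbb R^m/\Z^m,0) = \Z^m \to \Z^m/R$ is surjective (the first map is the abelianization and the second is the quotient). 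By the standard dictionary between connected covers and subgroups, $p_*\pi_1(\hat B_m,\hat b)$ is precisely the preimage in $\pi_1(B_m,b)$ of the trivial subgroup, i.e. the kernel of the composite morphism $\pi_1(B_m,b) \to \Z^m/R$; since $p_*$ is injective, this gives the first claimed isomorphism $\ker\big(\pi_1(B_m,b)\to\Z^m/R\big) \cong \pi_1(\hat B_m,\hat b)$.

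Next I would pin down the homotopy type of $\hat B_m$. Since $B_m$ is a finite graph (one vertex, $m$ edges) with Euler characteristic $1-m$, the degree-$d$ cover $\hat B_m$ is again a finite graph with $\chi(\hat B_m) = d(1-m)$. Being connected, it is homotopy equivalent to a wedge of $1 - \chi(\hat B_m) = 1 - d(1-m) = d(m-1)+1$ circles; hence its fundamental group is free of that rank. This settles the general part of the lemma.

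Finally, for $R = k\Z^m$ one has $\Z^m/R \cong (\Z/k\Z)^m$ and $d = k^m$, so the rank formula reads $k^m(m-1)+1$; it remains to exhibit the displayed generating set of the kernel. Here I would argue directly: the elements ${c_i}^k$ and the commutators $[{c_i}^p,{c_j}^q]$ all lie in $\ker$, since their images in $(\Z/k\Z)^m$ vanish (the commutators die in the abelian quotient, and ${c_i}^k \mapsto k\cdot e_i = 0$). For the reverse inclusion, let $N$ be the subgroup they generate; one shows $N$ is normal in $F = \pi_1(B_m,b)$ and that $F/N$ is abelian of exponent dividing $k$ with $m$ generators, hence a quotient of $(\Z/k\Z)^m$ — but $|F/N| \ge |F/\ker| = k^m$, forcing $F/N \cong (\Z/k\Z)^m$ and $N = \ker$. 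The normality of $N$ is where I expect the main bookkeeping: conjugating ${c_i}^k$ by a generator $c_j$ produces $c_j {c_i}^k c_j^{-1} = [c_j,{c_i}^k]\,{c_i}^k$, and $[c_j,{c_i}^k]$ must be rewritten as a product of the listed commutators $[{c_i}^p,{c_j}^q]$ (with $p,q \le k$) times elements of $N$, using the commutator identities $[x,yz] = [x,y]\,{}^y[x,z]$ and $[xy,z]={}^x[y,z]\,[x,z]$ repeatedly; this telescoping is the one genuinely fiddly step, and I would present it as a short lemma on free groups rather than grinding it out inline. An alternative, cleaner route to the same generating set is to read it off the graph $\hat B_m$ directly: choosing the vertices of $\hat B_m$ to be indexed by $(\Z/k\Z)^m$ and taking a spanning tree (e.g. a ``staircase'' path), the Nielsen--Schreier generators associated to the non-tree edges are exactly represented by the loops ${c_i}^k$ and $[{c_i}^p,{c_j}^q]$ after conjugating into the base vertex; I would choose whichever of these two presentations is shorter to write.
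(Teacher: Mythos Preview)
Your proposal is correct and follows the same line as the paper: the paper's proof is a three-sentence Euler characteristic computation (``$e(B_m)=1-m$, hence $e(\hat B_m)=d(1-m)$, so $\hat B_m$ is a wedge of $d(m-1)+1$ circles''), with everything else dismissed as ``straightforward.'' Your covering-space setup and Euler characteristic argument match this exactly; your detailed treatment of the generating set in the case $R=k\Z^m$ (normality of $N$, then the counting argument $|F/N|\le k^m$ via abelianity and exponent, or alternatively Nielsen--Schreier on the explicit graph) goes well beyond what the paper supplies, since the paper offers no argument for that part at all.
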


\begin{proof}All the assertions are quite straightforward. The Euler characteristic of $B_m$ is $e(B_m)=1-m$. Thus that of $\hat B_m$ is $e(\hat B_m)=d\,e(B_m)=d(1-m)$. Since $\hat B_m$ has the homotopy type of a wedge of circles, the number of those circles must be $d(m-1)+1$.\end{proof}

\begin{prop}\label{Cufundamentalgroup}As a covering space of $(\mathbb P^1)^u$, $C^u$ admits a hyperbolic structure. More precisely, $C^u$ is homeomorphic to the quotient of $\mathbb H^2_{\mathbb R}$ by the normal subgroup of $\Gamma(2)$ of index $5^2$ formed by all the possible products of $T_\infty$, $T_0$ and $T_1$ (and their inverses) where the numbers of occurrences of $T_\infty$, $T_0$ and $T_1$ respectively (counted with their multiplicity, say, $p$ for ${T_\infty}^p$) differ by multiples of $5$. Besides, that group is generated by ${T_\infty}^5$, ${T_0}^5$, ${T_1}^5$ and the commutators of powers of $T_\infty$, $T_0$, $T_1$.\end{prop}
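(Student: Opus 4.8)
The plan is to recognise the covering $C^u\to(\mathbb P^1)^u$ as an abelian Galois covering, read off the associated subgroup of $\Gamma(2)$ from its monodromy using Lemma~\ref{automorphismsofsigma}, and then describe that subgroup both by an exponent-sum condition and by an explicit generating set via Lemma~\ref{freegroupmorphismkernel}. To begin, recall from Section~\ref{sectionfibrationhirzebruch} that $f$ restricts to an isomorphism $\mathbb P(\textup T_p\mathbb P^2)\to\mathbb P^1$ carrying the three branch points of $\sigma_{|C}$ — the tangent directions at $p$ of the three lines $D_1,D_2,D_3$ of the arrangement through $p$ — onto the three singular values of the pencil of conics; hence, through this identification, the covering $C^u\to(\mathbb P^1)^u$ is precisely the restriction $\sigma_{|C}^u:C^u\to\mathbb P(\textup T_p\mathbb P^2)^u$. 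It is Galois with deck group $\Aut(\sigma_{|C})$, and since $C$ is connected (so is $C^u$, which is $C$ with finitely many points removed) the monodromy $\pi_1((\mathbb P^1)^u)\to\Aut(\sigma_{|C})$ is onto. Identifying $\pi_1((\mathbb P^1)^u)$ with $\Gamma(2)=\langle T_\infty,T_0,T_1\mid T_\infty T_0T_1=1\rangle$ acting on $\mathbb H^2_{\mathbb R}$ as in Proposition~\ref{Buhyperbolicstructure}, the kernel $N$ of the monodromy is a normal subgroup, $C^u$ is the cover of $(\mathbb P^1)^u\cong\Gamma(2)\backslash\mathbb H^2_{\mathbb R}$ associated with $N$, and since $\mathbb H^2_{\mathbb R}$ is simply connected and $N$ acts on it freely and properly discontinuously, $C^u\cong N\backslash\mathbb H^2_{\mathbb R}$ is a hyperbolic surface; this gives the first sentence of the statement.

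Next I would compute $\Aut(\sigma_{|C})$ and the monodromy $\varphi:\Gamma(2)\to\Aut(\sigma_{|C})$. With $n=5$ and $m=3$, Lemma~\ref{automorphismsofsigma} gives $\Stab_{\Aut(\chi)}(q)=\langle\alpha_{D_1},\alpha_{D_2},\alpha_{D_3}\rangle$; these generators have order $5$ and are independent, because in $\Aut(\chi)\cong(\mathbb Z/5\mathbb Z)^{k-1}$ the only relation among the $\alpha_D$ is $\prod_D\alpha_D=1$, involving all six of them. Hence $\Stab_{\Aut(\chi)}(q)\cong(\mathbb Z/5\mathbb Z)^3$, and by Lemma~\ref{automorphismsofsigma}, $\Aut(\sigma_{|C})\cong\Stab_{\Aut(\chi)}(q)/\langle\alpha_{D_1}\alpha_{D_2}\alpha_{D_3}\rangle\cong(\mathbb Z/5\mathbb Z)^2$. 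A small loop in $\mathbb P(\textup T_p\mathbb P^2)^u$ around the puncture coming from $D_i$ is, locally, a small loop in $\widehat{\mathbb P^2}$ around the proper transform of $D_i$ — the exceptional curve $\mathbb P(\textup T_p\mathbb P^2)$ and that proper transform meet normally — so by Remark~\ref{automorphismsalphaD} and Lemma~\ref{automorphismsofsigma} its monodromy is the class $\bar\alpha_{D_i}$ of $\alpha_{D_i}$ in $\Aut(\sigma_{|C})$. Labelling the punctures so that $T_\infty,T_0,T_1$ are the loops around those of $D_1,D_2,D_3$, we get $\varphi(T_\infty)=\bar\alpha_{D_1}$, $\varphi(T_0)=\bar\alpha_{D_2}$, $\varphi(T_1)=\bar\alpha_{D_3}$, compatibly with $T_\infty T_0T_1=1$ since $\bar\alpha_{D_1}+\bar\alpha_{D_2}+\bar\alpha_{D_3}=0$. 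As $\Aut(\sigma_{|C})$ is abelian, a word in $T_\infty^{\pm1},T_0^{\pm1},T_1^{\pm1}$ with exponent sums $a,b,c$ maps to $a\bar\alpha_{D_1}+b\bar\alpha_{D_2}+c\bar\alpha_{D_3}=(a-c)\bar\alpha_{D_1}+(b-c)\bar\alpha_{D_2}$, which vanishes exactly when $a\equiv b\equiv c\pmod 5$; this is the description of $N$ in the statement, and $[\Gamma(2):N]=|(\mathbb Z/5\mathbb Z)^2|=5^2$.

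For the generating set, I would invoke Lemma~\ref{freegroupmorphismkernel}. The group $\Gamma(2)$ is free of rank $2$, freely generated by $T_\infty$ and $T_0$ (with $T_1=T_0^{-1}T_\infty^{-1}$), and $\varphi(T_\infty)=\bar\alpha_{D_1}$, $\varphi(T_0)=\bar\alpha_{D_2}$ form a basis of $\Aut(\sigma_{|C})\cong(\mathbb Z/5\mathbb Z)^2$; so $\varphi$ is, after this identification, the canonical surjection of a free group of rank $2$ onto $(\mathbb Z/5\mathbb Z)^2$. Lemma~\ref{freegroupmorphismkernel} in the case $R=5\mathbb Z^2$ then shows $N$ is generated by $T_\infty^5$, $T_0^5$ and the commutators $[T_\infty^p,T_0^q]$ for $1\le p,q\le5$. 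On the other hand each of $T_\infty^5$, $T_0^5$, $T_1^5$ and every commutator of powers of $T_\infty$, $T_0$, $T_1$ lies in $N$, as it maps to $0$ in the abelian group $\Aut(\sigma_{|C})$, and the explicit generators just found are among this list; hence $N$ is generated by $T_\infty^5$, $T_0^5$, $T_1^5$ together with the commutators of powers of $T_\infty$, $T_0$, $T_1$, as claimed.

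The delicate step is the middle one — pinning down $\varphi$ precisely, i.e.\ matching the three punctures of $(\mathbb P^1)^u$ with the three lines through $p$ and checking that the monodromy around the $D_i$-puncture is genuinely $\bar\alpha_{D_i}$. Even a coarser analysis suffices, though: since the local monodromy at each $D_i$-puncture has order $5$, it is some power $\bar\alpha_{D_i}^{\,n_i}$ with $\gcd(n_i,5)=1$, and the relation $\varphi(T_\infty T_0T_1)=0$ then forces $n_1\equiv n_2\equiv n_3\pmod 5$, so $\varphi$ differs from the map above by an automorphism of $(\mathbb Z/5\mathbb Z)^2$ and has the same kernel $N$. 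Everything else — the structure of $\Aut(\sigma_{|C})$, the transfer of the hyperbolic structure, and the passage to generators — is then routine.
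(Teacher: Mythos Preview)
Your proof is correct and follows essentially the same route as the paper: both identify $C^u\to(\mathbb P^1)^u$ with $\sigma_{|C}^u$, use the short exact sequence of the Galois covering to realise $\pi_1(C^u)$ as the kernel of the monodromy $\Gamma(2)\to\Aut(\sigma_{|C})\cong(\mathbb Z/5\mathbb Z)^2$, compute that monodromy on $T_\infty,T_0,T_1$ via Lemma~\ref{automorphismsofsigma}, and then invoke Lemma~\ref{freegroupmorphismkernel} for the generating set. Your version is a bit more explicit (working out $\Stab_{\Aut(\chi)}(q)$ in detail and adding the robustness remark at the end), but the argument is the same.
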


\begin{rmk}Since the generators $T_\infty$, $T_0$, $T_1$ satisfy the relation$$T_\infty T_0T_1=1,$$the previous properties may be written only in terms of two of the generators. As a fundamental group of a sphere with three punctures, $\pi_1(C^u)$ is indeed isomorphic to the free group with two generators, say $T_\infty$ and $T_0$.

Any element of $\pi_1(C^u)$, written as a product of $T_\infty$, $T_0$ and $T_1$, may be interpreted as (the homotopy class of) the lift to $C^u$ of a loop in $(\mathbb P^1)^u$ obtained by turning around the puncture corresponding to the factor $T_\infty$, $T_0$ or $T_1$, each time one of them appears in the product. Representing a loop in $(\mathbb P^1)^u$ rather than in $C^u$ is indeed easier since $C^u$ is a Riemann surface of genus $6$ with $15$ punctures.\end{rmk}

\begin{proof}The unbranched covering map $\sigma:C^u\to(\mathbb P^1)^u$ induces a short exact sequence$$\xymatrix{1\ar[r]&\pi_1(C^u,y)\ar[r]^{(\sigma_{|C^u})_*}&\pi_1((\mathbb P^1)^u,\sigma(y))\ar[r]^{\alpha_y}&\Aut(\sigma_{|C^u})\ar[r]&1}$$where $y$ denotes the base point of $C^u$. Identify $\pi_1((\mathbb P^1)^u,\sigma(y))$ with 
$$\Gamma(2)=\left<T_\infty,T_0,T_1\mid T_\infty T_0T_1=1\right>$$ (see \ref{Buhyperbolicstructure}). With \ref{automorphismsofsigma}, the image of $T_\infty$ by $\alpha_y$ is the automorphism $\alpha_D$ of $\sigma$ where $D$ is the line of the arrangement corresponding, in the identification of $\Gamma(2)\backslash\mathbb H^2_{\mathbb R}$ with $(\mathbb P^1)^u$ and $\mathbb P(\textup T_p\mathbb P^2)$, to the image of $\infty$. And similarly for $0$ and $1$.

Identify $\Aut(\sigma_{|C})$ with $(\mathbb Z/5\mathbb Z)^2$ so that the morphism $\alpha_y:\Gamma(2)\to(\mathbb Z/5\mathbb Z)^2$ maps $T_\infty$ to $(1,0)$, $T_0$ to $(0,1)$ and $T_1$ to $(-1,-1)$. Since $\pi_1(C^u,y)$ is isomorphic to the kernel of $\alpha_y$, it is also isomorphic to the subgroup of $\Gamma(2)$ formed by the products of $T_\infty$, $T_0$ and $T_1$ (and their inverses) where the number of occurrences of $T_\infty$, $T_0$ and $T_1$ respectively (counted with their multiplicity, say, $p$ for ${T_\infty}^p$) differ by multiples of $5$.

According to lemma \ref{freegroupmorphismkernel}, $\pi_1(C^u,y)$ is isomorphic to the subgroup of $\Gamma(2)$ generated by ${T_\infty}^5$, ${T_0}^5$, ${T_1}^5$ and the commutators of powers of $T_\infty$, $T_0$, $T_1$.\end{proof}

The Riemann surface $C$ may also be uniformized. Instead of cusps and parabolic isometries as on $(\mathbb P^1)^u$ and $C^u$, consider the hyperbolic orbifold structure on $\mathbb P^1$ where the three points in $\mathbb P^1\setminus(\mathbb P^1)^u$ have conic angle $2\pi/5$. Such a structure may be constructed by considering a (regular) hyperbolic triangle with angle $\pi/5$ at each vertex.
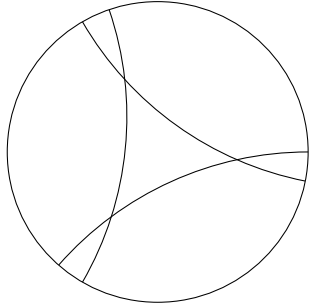
\begin{figure}[htb]
  \centering
  \begin{tikzpicture}[scale=2]
    \draw (0,0) circle [radius = 1] ;
    \foreach \j in {0,1,2} \draw [rotate around={120*\j:(0,0)}] (1,0) arc [radius = {sqrt(3/2*(sqrt(5)+1))}, start angle = 90, end angle = {138.8}] ;
  \end{tikzpicture}
  \caption{A hyperbolic triangle with angle $\pi/5$ at each vertex.}
\end{figure}
Thus the quotient of $\mathbb H^2_{\mathbb R}$ by the triangle group $T(5,5,5)$ is an orbifold homeomorphic to $\mathbb P^1$: the triangle group $T(5,5,5)$ is the subgroup of index $2$, formed by the orientation-preserving isometries, of the group generated by the reflections with respect to the sides of the hyperbolic triangle with angle $\pi/5$ at each vertex. It is generated by the rotations of angle $2\pi/5$ around the vertices of the triangle and any two adjacent translates of the triangle form a fundamental domain. Let $R_1,R_2,R_3$ denote the rotations of angle $2\pi/5$ around the vertices of such a triangle, indexed such that they satisfy the relation $R_3R_2R_1=1$.

\begin{prop}\label{Cfundamentalgroup}The Riemann surface $C$ is homeomorphic to the quotient of $\mathbb H^2_{\mathbb R}$ by the normal subgroup of $T(5,5,5)$ of index $5^2$ formed by all the possible products of $R_1$, $R_2$ and $R_3$ (and their inverses) where the numbers of occurrences of $R_1$, $R_2$ and $R_3$ respectively (counted with their multiplicity, say, $p$ for ${R_1}^p$) differ by multiples of $5$. Besides, that group is generated by the commutators of powers of $R_1$, $R_2$, $R_3$.\end{prop}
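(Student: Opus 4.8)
The plan is to run the argument of Proposition~\ref{Cufundamentalgroup} again, with the thrice-punctured sphere $(\mathbb P^1)^u=\Gamma(2)\backslash\mathbb H^2_{\mathbb R}$ replaced by the hyperbolic orbifold $\mathcal O:=\mathbb P^1(5,5,5)=T(5,5,5)\backslash\mathbb H^2_{\mathbb R}$ and the branched covering $f\circ\sigma_{|C}\colon C\to\mathbb P^1$ replaced by the orbifold covering it induces. The first point is that, since the ramification index of $f\circ\sigma_{|C}$ over each of its three branch points (the directions in $\mathbb P(\textup T_p\mathbb P^2)$ of the three lines of the arrangement through $p$) equals the cone order $5$, the map $C\to\mathbb P^1$ is an honest orbifold covering $C\to\mathcal O$ — the surface $C$ itself carrying no orbifold point. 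By Lemma~\ref{resolutionexceptionaldivisor} and Lemma~\ref{automorphismsofsigma} this covering is Galois of degree $5^2$ with deck group $\Aut(\sigma_{|C})\cong(\mathbb Z/5\mathbb Z)^2$, so, identifying $\pi_1^{\mathrm{orb}}(\mathcal O)$ with $T(5,5,5)$ as in the paragraph preceding the statement, one obtains a short exact sequence
\[
1\longrightarrow\pi_1(C)\longrightarrow T(5,5,5)\xrightarrow{\ \bar\alpha\ }(\mathbb Z/5\mathbb Z)^2\longrightarrow1,
\]
whose restriction along the canonical surjection $q\colon\Gamma(2)=\pi_1\big((\mathbb P^1)^u\big)\twoheadrightarrow\pi_1^{\mathrm{orb}}(\mathcal O)=T(5,5,5)$ recovers the monodromy $\alpha_y$ of the covering $C^u\to(\mathbb P^1)^u$; that is, $\bar\alpha\circ q=\alpha_y$.

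Next I would identify $\bar\alpha$. The generator $R_i$ is represented by a small loop around the $i$-th cone point, so by Lemma~\ref{automorphismsofsigma} its image $\bar\alpha(R_i)$ is the deck transformation $\alpha_{D_i}$, exactly as $\alpha_y$ sends the loop around the corresponding cusp to $\alpha_{D_\bullet}$ in the proof of Proposition~\ref{Cufundamentalgroup}. Thus $\bar\alpha$ is onto and, being a homomorphism to an abelian group, factors through the abelianization of $T(5,5,5)$; a one-line computation from the presentation $\langle R_1,R_2,R_3\mid R_1^5=R_2^5=R_3^5=R_3R_2R_1=1\rangle$ gives $T(5,5,5)^{\mathrm{ab}}\cong(\mathbb Z/5\mathbb Z)^2$, so $\bar\alpha$ \emph{is} the abelianization map and $\pi_1(C)\cong[T(5,5,5),T(5,5,5)]$. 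Since $\bar\alpha(R_1),\bar\alpha(R_2),\bar\alpha(R_3)$ generate $(\mathbb Z/5\mathbb Z)^2$ subject only to their product being trivial, a word in the $R_i$ and their inverses lies in $\ker\bar\alpha=\pi_1(C)$ precisely when the net numbers of occurrences of $R_1$, $R_2$ and $R_3$ agree modulo $5$ — this is the first assertion of the proposition.

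For the generating set I would use the equality $\pi_1(C)=\ker\bar\alpha=q(\ker\alpha_y)=q\big(\pi_1(C^u)\big)$, which is immediate from $\bar\alpha\circ q=\alpha_y$ and the surjectivity of $q$. By Proposition~\ref{Cufundamentalgroup}, $\pi_1(C^u)$ is generated by ${T_\infty}^5,{T_0}^5,{T_1}^5$ together with the commutators of powers of $T_\infty,T_0,T_1$; applying $q$ sends ${T_\bullet}^5$ to ${R_\bullet}^5=1$ and the commutator $[{T_i}^p,{T_j}^q]$ to $[{R_i}^p,{R_j}^q]$, so $\pi_1(C)$ is generated by the commutators of powers of $R_1,R_2,R_3$, as claimed.

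The step I expect to require the most care is the very first one: checking, at the level of orbifold fundamental groups, that resolving the branched cover $C\to\mathbb P^1$ is compatible with replacing the three cusps of the base by order-$5$ cone points — that is, that $\bar\alpha\circ q=\alpha_y$ and that the $\bar\alpha$ of the orbifold short exact sequence is indeed the one computed from Lemma~\ref{automorphismsofsigma}. One could avoid orbifolds entirely by instead proving a direct analogue of Lemma~\ref{freegroupmorphismkernel} for the index-$5^2$ subgroup $\ker\bar\alpha$ of $T(5,5,5)$ (noting $T(5,5,5)=\langle R_1,R_2\rangle$ with $R_3=(R_2R_1)^{-1}$), but since $T(5,5,5)$ is not free this needs an argument of its own; comparing with $C^u$ as above is the shorter route.
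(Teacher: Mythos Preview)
Your argument is correct and follows the same outline the paper intends by ``Similar to the proof of proposition~\ref{Cufundamentalgroup}'': replace $\Gamma(2)$ by $T(5,5,5)$, set up the Galois orbifold covering $C\to\mathbb P^1(5,5,5)$, and identify $\pi_1(C)$ with the kernel of the monodromy onto $(\mathbb Z/5\mathbb Z)^2$. Your extra step---deducing the generating set by pushing the generators of $\pi_1(C^u)$ from Proposition~\ref{Cufundamentalgroup} through the surjection $q\colon\Gamma(2)\twoheadrightarrow T(5,5,5)$---is a clean way around the fact that Lemma~\ref{freegroupmorphismkernel} does not apply verbatim to the non-free group $T(5,5,5)$, and it makes explicit a point the paper leaves to the reader.
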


\begin{proof}Similar to the proof of proposition \ref{Cufundamentalgroup}.\end{proof}

\begin{prop}\label{kernelCuC}The surjective morphism $\pi_1(C^u)\to\pi_1(C)$ induced by the inclusion $C^u\to C$ is the restriction (to the corresponding subgroups) of the morphism $\Gamma(2)\to T(5,5,5)$ mapping $T_\infty,T_0,T_1$ to $R_3,R_2,R_1$ respectively. In particular, the kernel is the smallest normal subgroup of $\Gamma(2)$ generated by ${T_\infty}^5,{T_1}^5,{T_0}^5$. The kernel contains all the parabolic elements of $\pi_1(C^u)$.\end{prop}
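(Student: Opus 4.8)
The plan is to recognise, via Propositions~\ref{Cufundamentalgroup} and~\ref{Cfundamentalgroup}, both $\pi_1(C^u)$ and $\pi_1(C)$ as kernels of two parallel surjections onto $(\mathbb Z/5\mathbb Z)^2$: the morphism $\alpha^u\colon\Gamma(2)\to(\mathbb Z/5\mathbb Z)^2$ sending $T_\infty\mapsto(1,0)$, $T_0\mapsto(0,1)$, $T_1\mapsto(-1,-1)$, and the morphism $\bar\alpha\colon T(5,5,5)\to(\mathbb Z/5\mathbb Z)^2$ sending $R_3\mapsto(1,0)$, $R_2\mapsto(0,1)$, $R_1\mapsto(-1,-1)$. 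First I would verify that the assignment $T_\infty\mapsto R_3$, $T_0\mapsto R_2$, $T_1\mapsto R_1$ defines a surjective morphism $\phi\colon\Gamma(2)\to T(5,5,5)$: it is well defined since the single defining relation $T_\infty T_0T_1=1$ of $\Gamma(2)$ goes to $R_3R_2R_1=1$, it is onto since $R_1,R_2,R_3$ generate $T(5,5,5)$, and, comparing the von Dyck presentation $\langle R_1,R_2,R_3\mid {R_i}^5=R_3R_2R_1=1\rangle$ of $T(5,5,5)$ with $\langle T_\infty,T_0,T_1\mid T_\infty T_0T_1=1\rangle$, its kernel is the smallest normal subgroup of $\Gamma(2)$ containing $\{{T_\infty}^5,{T_0}^5,{T_1}^5\}$. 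Since $\bar\alpha\circ\phi=\alpha^u$ on generators, one gets $\ker\phi\subseteq\ker\alpha^u=\pi_1(C^u)$, so $\phi$ restricts to a morphism $\pi_1(C^u)\to\pi_1(C)=\ker\bar\alpha$ whose kernel is exactly $\ker\phi$.

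The substantial step is to identify this algebraic restriction of $\phi$ with the morphism induced by the topological inclusion $C^u\hookrightarrow C$. I would deduce it from the commutative square of orbifolds
\[
\begin{array}{ccc}
C^u & \hookrightarrow & C \\
\downarrow & & \downarrow \\
(\mathbb P^1)^u & \hookrightarrow & \mathbb P^1
\end{array}
\]
in which the right vertical arrow is the (orbifold) covering $C\to\mathbb P^1$ underlying Proposition~\ref{Cfundamentalgroup} and the left one is obtained from it by deleting from $C$ the fifteen preimages of the three cone points — those preimages being ordinary points of the Riemann surface $C$ precisely because the ramification index $5$ equals the cone order, which is also why $C$ is a genuine surface. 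Passing to (orbifold) fundamental groups based as in Notation~\ref{basepointsonceandforall}, and using elementary covering theory (cf.\ Proposition~\ref{coverbasics}), the bottom inclusion induces $\phi$ — a small loop around a puncture of $(\mathbb P^1)^u$ becomes a small loop around the corresponding cone point of $\mathbb P^1$, whose class in the orbifold fundamental group $T(5,5,5)$ is the rotation $R_\bullet$ — while the vertical coverings correspond to the subgroups $\pi_1(C^u)\subseteq\Gamma(2)$ and $\pi_1(C)\subseteq T(5,5,5)$; commutativity of the square then forces $\pi_1(C^u)\to\pi_1(C)$ to be $\phi$ restricted. Surjectivity is automatic, the map being induced by the inclusion of the complement of finitely many points in a surface.

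The last two assertions are then formal. The kernel of $\pi_1(C^u)\to\pi_1(C)$ equals $\ker\phi$, which by the first paragraph is the smallest normal subgroup of $\Gamma(2)$ containing ${T_\infty}^5$, ${T_1}^5$, ${T_0}^5$. And every parabolic element of the finite-area hyperbolic surface group $\pi_1(C^u)$ is conjugate to a loop around one of the fifteen punctures of $C^u$; such a loop bounds a once-punctured disc in $C^u$, hence becomes trivial in $C$, so it lies in $\ker(\pi_1(C^u)\to\pi_1(C))$ — equivalently, pushed down to $(\mathbb P^1)^u$ it runs five times around a cusp, so it is a $\Gamma(2)$-conjugate of some ${T_\bullet}^5\in\ker\phi$.

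I expect the one genuinely delicate point to be the identification in the second paragraph: one must be sure the orbifold-covering square commutes on the nose, and in particular that the morphism induced by $(\mathbb P^1)^u\hookrightarrow\mathbb P^1$ is $\phi$ itself rather than a twist of it by powers or by an automorphism of $T(5,5,5)$. This is ensured by fixing, once and for all, the compatible identifications of $\mathbb H^2_{\mathbb R}/\Gamma(2)$ and $\mathbb H^2_{\mathbb R}/T(5,5,5)$ with $(\mathbb P^1)^u$ and $\mathbb P^1$ already implicit in Propositions~\ref{Buhyperbolicstructure},~\ref{Cufundamentalgroup} and~\ref{Cfundamentalgroup}, so that $T_\bullet$ and $R_\bullet$ are represented by loops around the same puncture, resp.\ cone point.
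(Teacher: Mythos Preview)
Your proof is correct and follows essentially the same route as the paper: both identify $\pi_1(C^u)$ and $\pi_1(C)$ as the kernels of the parallel surjections $\Gamma(2)\to(\mathbb Z/5\mathbb Z)^2$ and $T(5,5,5)\to(\mathbb Z/5\mathbb Z)^2$ from Propositions~\ref{Cufundamentalgroup} and~\ref{Cfundamentalgroup}, fit these into the commutative square over $(\mathbb P^1)^u\hookrightarrow\mathbb P^1$, read off $\ker\phi$ from the von~Dyck presentation of $T(5,5,5)$, and handle parabolics by observing they are $\Gamma(2)$-conjugates of fifth powers of the cusp generators. Your treatment is in fact more explicit than the paper's about why the inclusion-induced map coincides with $\phi$ rather than a twist of it, which the paper leaves implicit in its diagram.
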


\begin{proof}Following propositions \ref{Cufundamentalgroup} and \ref{Cfundamentalgroup}, $\pi_1(C^u)$ and $\pi_1(C)$ are identified to subgroups of $\Gamma(2)$ and $T(5,5,5)$ respectively, in such a way that the diagram
$$\xymatrix{
\pi_1(C^u) \ar@{->>}[d] \ar@{^{(}->}[r] & \Gamma(2)\simeq(\mathbb P^1)^u \ar@{->>}[d] \\
\pi_1(C) \ar@{^{(}->}[r] & T(5,5,5)
}$$
is commutative, where $T(5,5,5)$ is seen as the fundamental group of quotient orbifold and that the morphism $\Gamma(2)\to T(5,5,5)$ maps the generators $T_\infty,T_0,T_1$ to $R_3,R_2,R_1$ respectively. Observe that the kernel of the latter morphism is the smallest normal subgroup of $\Gamma(2)$ generated by the three elements ${T_\infty}^5,{T_1}^5,{T_0}^5$. As these elements belong to $\pi_1(C^u)$, the kernel of the morphism $\pi_1(C^u)\to\pi_1(C)$ is also the smallest normal subgroup of $\pi_1(C^u)$ generated by ${T_\infty}^5,{T_1}^5,{T_0}^5$.

Any parabolic element of $\pi_1(C^u)$ is conjugate in $\Gamma(2)$ to a power of $T_\infty$, $T_0$ or $T_1$, hence to a power of ${T_\infty}^5$, ${T_0}^5$ or ${T_1}^5$ according to \ref{Cufundamentalgroup}. Therefore, any parabolic element is contained in the kernel.\end{proof}

In section \ref{yamazakiyoshida}, a set of matrices generating the lattice $G_1$ is given. Recalling that the morphism $\pi_1(C)\to\pi_1(Y_1)$ is injective and identifying $\pi_1(Y_1)$ with the commutator subgroup $[G_1,G_1]$, the following proposition shows that the image of the morphism $\pi_1(C)\to G_1$ is a subgroup stabilizing a complex line in $\mathbb H^2_\mathbb C$.

\begin{prop}\label{Cfundamentalgrouppreservesacomplexline}
The fundamental group of $C$ is isomorphic to the commutator subgroup of the subgroup of $\textup{PGL}_3(\mathbb C)$ generated by $R(ij)$, $R(jk)$, $R(ik)$ for some distinct indices $i$, $j$ and $k$ (two of them are actually sufficient).

Choosing for example, $R(01)$, $R(02)$ and $R(12)$, it appears that the group in question preserves the line in $\mathbb C^3$ directed by $(0,0,1)$, which is positive. Therefore it preserves a complex plane in $\mathbb C^3$ with signature $(1,1)$ and hence a complex line in $\mathbb H^2_\mathbb C$.
\end{prop}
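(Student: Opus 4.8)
The plan is to realise $\pi_1(C)$ inside $\pi_1(Y_1)=[G_1,G_1]$ as the commutator subgroup of the group generated by the three complex reflections attached to the lines through a triple point, and then to read off, from the explicit matrices of Section~\ref{yamazakiyoshida}, that those reflections share a common fixed vector. Concretely, I take $p=[0:0:1]$, which lies on exactly the three lines $D_{01}$ ($z_1=0$), $D_{02}$ ($z_2=0$), $D_{12}$ ($z_1=z_2$) of the arrangement, and I let $C$ be the genus-$6$ curve of Proposition~\ref{steinfactorization} lying over $p$, with the base points fixed in Notation~\ref{basepointsonceandforall}; put $H=\langle R(01),R(02),R(12)\rangle$.

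The first step is to describe the structure of $H$. From the matrices one checks that each of $R(01),R(02),R(12)$ has eigenvalues $1,1,\mu^2$ with $\mu^2=e^{2\pi i/5}$, so each is a complex reflection of order $5$ in the unitary group of $A_1$, fixing pointwise the complex geodesic of $\mathbb H^2_{\mathbb C}$ polar to its $\mu^2$-eigenvector; these geodesics lie over $D_{01},D_{02},D_{12}$, in accordance with the classical fact (Yamazaki--Yoshida) that the arrangement lines and the exceptional curves are totally geodesic in the ball quotient $\widehat{\mathbb P^2}=G_1\backslash\mathbb H^2_{\mathbb C}$. By Lemma~\ref{automorphismsofsigma}, the product $z:=R(01)R(02)R(12)$ corresponds to a small loop turning around the exceptional curve $E=\mathbb P(\textup T_p\mathbb P^2)$; the commutation relations recalled in Section~\ref{yamazakiyoshida} make $z$ central in $H$, and $z^5=1$ since $E$ carries isotropy of order $5$. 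Fix the complex geodesic $L\subset\mathbb H^2_{\mathbb C}$ lying over $E$ and mapping onto $C$. Its stabiliser $\Stab_{G_1}(L)$ acts on $L\cong\mathbb H^2_{\mathbb R}$ with kernel the generic transverse isotropy $\langle z\rangle\cong\mathbb Z/5$ of $E$, and with quotient the Fuchsian group uniformising $E\cong\mathbb P^1$ equipped with conic angle $2\pi/5$ at the three points $E\cap D_{01},E\cap D_{02},E\cap D_{12}$, that is $T(5,5,5)$, the reflections $R(01),R(02),R(12)$ acting as its three standard cone-point rotations. As these reflections generate $T(5,5,5)$ modulo $\langle z\rangle$ while $z$ itself lies in $H$, we get $H=\Stab_{G_1}(L)$, sitting in a central extension $1\to\langle z\rangle\to H\to T(5,5,5)\to 1$.

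Next I would combine this with Proposition~\ref{Cfundamentalgroup} and a little abelian bookkeeping. Since $T(5,5,5)^{\mathrm{ab}}\cong(\mathbb Z/5)^2$, the index-$5^2$ subgroup of $T(5,5,5)$ described there (balanced exponent sums, generated by commutators of powers of $R_1,R_2,R_3$) is precisely $[T(5,5,5),T(5,5,5)]$, so $\pi_1(C)=[T(5,5,5),T(5,5,5)]$. On the other hand $G_1^{\mathrm{ab}}\cong\Aut(\sigma)\cong(\mathbb Z/5)^6/\langle(1,\dots,1)\rangle\cong(\mathbb Z/5)^5$, in which the classes of $R(01),R(02),R(12)$ are independent; hence $H^{\mathrm{ab}}\cong(\mathbb Z/5)^3$ embeds in $G_1^{\mathrm{ab}}$, the class of $z$ there has order $5$, and therefore $H\cap[G_1,G_1]=[H,H]$ while $\langle z\rangle\cap[H,H]=1$, so that $[H,H]\xrightarrow{\ \sim\ }[T(5,5,5),T(5,5,5)]$ through the extension. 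Since $C$ is a connected component of $\sigma^{-1}(E)$, its preimage in $\mathbb H^2_{\mathbb C}$ is the $[G_1,G_1]$-orbit of $L$, whence $\pi_1(C)=\Stab_{[G_1,G_1]}(L)=H\cap[G_1,G_1]=[H,H]$, and this inclusion into $[G_1,G_1]=\pi_1(Y_1)$ is the map of the statement. This proves the first assertion; two of the three generators suffice, since $R(01),R(02)$ have images generating $T(5,5,5)$ and the central factor $z$ disappears from commutators.

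For the second assertion, the matrices give $R(01)e_3=R(02)e_3=R(12)e_3=e_3$, so $H$ — hence $[H,H]=\pi_1(C)$ — fixes the line $\mathbb C\cdot(0,0,1)$, while $A_1\big((0,0,1),(0,0,1)\big)=\frac{-1}{\mu+\overline\mu}=\frac{1}{2\cos(\pi/5)}>0$ because $\mu+\overline\mu=2\cos(6\pi/5)<0$; thus $(0,0,1)$ is a positive vector, its $A_1$-orthogonal complement is a $2$-plane of signature $(1,1)$, and the projective line it spans cuts $\mathbb H^2_{\mathbb C}$ in a complex line preserved by $\pi_1(C)$. The hard part is the structure statement for $H$ above: it rests on the (here-unestablished but classical) fact that the configuration curves are totally geodesic in $\widehat{\mathbb P^2}$, together with a careful use of Lemma~\ref{automorphismsofsigma} to recognise the generic transverse isotropy of $E$ as $\langle R(01)R(02)R(12)\rangle$ and the cone-point rotations of $E$ as $R(01),R(02),R(12)$; the remaining steps — the index and abelianisation computations and the norm of $(0,0,1)$ — are routine.
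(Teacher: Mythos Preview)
Your argument is correct, but it is considerably more elaborate than the paper's. The paper's proof is essentially two sentences: it invokes Proposition~\ref{Cfundamentalgroup} to identify $\pi_1(C)$ with the commutator subgroup of $T(5,5,5)$, observes that the generating rotations $R_1,R_2,R_3$ correspond to loops around the three arrangement lines through the chosen triple point and hence, under the Yamazaki--Yoshida representation $R$, to a triple $R(ij),R(jk),R(ik)$, and declares the remainder (that $R(01),R(02),R(12)$ fix $(0,0,1)$ and that this vector is positive) to be straightforward computation. The paper does not spell out why the abstract identification of $\pi_1(C)$ inside $T(5,5,5)$ matches the concrete image inside $G_1$, relying implicitly on the injectivity of $\pi_1(C)\to\pi_1(Y_1)$ announced just before the proposition.

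Your route is genuinely different in that you supply the structural content the paper omits: you realise $H=\langle R(01),R(02),R(12)\rangle$ as the stabiliser of a complex geodesic $L$, exhibit the central extension $1\to\langle z\rangle\to H\to T(5,5,5)\to 1$ with $z=R(01)R(02)R(12)$, and then use the abelianisation embedding $H^{\mathrm{ab}}\hookrightarrow G_1^{\mathrm{ab}}\cong(\mathbb Z/5)^5$ to conclude both $H\cap[G_1,G_1]=[H,H]$ and $\langle z\rangle\cap[H,H]=1$, whence $[H,H]\cong[T(5,5,5),T(5,5,5)]=\pi_1(C)$. This buys you an honest identification of $\pi_1(C)$ with its image in $G_1$, at the cost of importing the totally-geodesic fact you flag as classical but unestablished here. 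It is worth noting that the abelianisation step alone already gives $H\cap[G_1,G_1]=[H,H]$; combined with the paper's injectivity statement $\pi_1(C)\hookrightarrow[G_1,G_1]$ and the observation that the generating commutators of $\pi_1(C)$ land in $[H,H]$ and generate it, one can reach the conclusion without the full stabiliser identification $H=\Stab_{G_1}(L)$. Your verification of the fixed vector and its positivity agrees with the paper's.
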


\begin{proof}
According to proposition \ref{Cfundamentalgroup}, the fundamental group of $C$ is isomorphic to the subgroup of the triangle group $T(5,5,5)$ generated by the commutators of the elements $R_1,R_2,R_3$. These elements correspond to loops around the three lines of the arrangement passing through a given triple point and hence to a triple of the form $R(ij)$, $R(jk)$, $R(ik)$ for some distinct indices $i$, $j$ and $k$.

The remainder is straightforward computations.
\end{proof}


\section{Representations of 3-manifold groups}\label{sectionrep3manifolds}

Recall the notations \ref{basepointsonceandforall} about base points of fundamental groups. For any element $\gamma$ in $\pi_1(C^u)$, let $M_\gamma\to\mathbb R/\mathbb Z$ be the surface bundle over the circle with fiber $F_0$ and where the homeomorphism is the monodromy of the fibration ${Y_1}^u\to C^u$ along $\gamma$. If a loop $\mathbb R/\mathbb Z\to C^u$ represents $\gamma$, then there is a natural mapping $M_\gamma\to Y_1$ such that the diagram
$$\xymatrix{
M_\gamma \ar[r] \ar@{->>}[d] & Y_1 \ar@{->>}[d] \\
\mathbb R/\mathbb Z \ar[r] & C
}$$
is commutative. For instance, if the loop $\mathbb R/\mathbb Z\to C^u$ happens to be an embedding or an immersion, then the same goes for $M_\gamma\to Y_1$.

The mapping $M_\gamma\to Y_1$ induces a morphism $\rho_\gamma:\pi_1(M_\gamma)\to\pi_1(Y_1)$ and hence a representation into a complex hyperbolic lattice. The manifold $M_\gamma$, the fibration $M_\gamma\to\mathbb R/\mathbb Z$ and of course the conjugacy class of the representation $\rho_\gamma$ depends only on the conjugacy class of $\gamma$ in $\pi_1(C^u)$. Observe that it  does  depend on the orientation of $\gamma$.\bigskip

Since $\pi_1((\mathbb P^1)^u)\to\Mod_{0,4}$ is an isomorphism, every mapping class in $\Mod_{0,4}$ can be realized as the monodromy along a curve in $(\mathbb P^1)^u$, of the fibration ${\widehat{\mathbb P^2}}^u\to(\mathbb P^1)^u$. The generic fiber of $\widehat{\mathbb P^2}\to\mathbb P^1$ is a sphere with $4$ marked points. Therefore, all the possible surface bundles with the sphere as fiber and with monodromy preserving each of the $4$ marked points are obtained in this way.\\
The same construction of surface bundles for the fibration ${\widehat{\mathbb P^2}}^u\to(\mathbb P^1)^u$, instead of ${Y_1}^u\to C^u$ as above, hence produces representations of the fundamental groups of all those surface bundles. More precisely, the complex hyperbolic structure on $Y_1$ descends to a branched complex hyperbolic structure on $\widehat{\mathbb P^2}$ by the branched covering $Y_1\to\widehat{\mathbb P^2}$. And the fibers of the latter surface bundles are seen as orbifolds with isotropy of order $5$ at each of the four marked points. For $\gamma$ in $\pi_1(C^u)$, the surface bundle $M_\gamma$ is nothing but a branched covering of the orbifold surface bundle whose monodromy is the image of $\gamma$ by $\pi_1(C^u)\to\pi_1((\mathbb P^1)^u)$.

\begin{prop}
For each element $f$ of $\Mod_{0,4}$, consider the surface bundle $M_f$ with monodromy $f$ and with fiber the orbifold with the sphere as underlying space and with isotropy of order $5$ at each of the four marked points. There is a representation of the orbifold fundamental group of $M_f$ into a lattice in $\Isom(\mathbb H^2_\mathbb C)$.
\end{prop}

The group $\pi_1(M_\gamma)$ is isomorphic to the semi-direct product $\left<\gamma\right>\ltimes\pi_1(F_0)$.
$$\xymatrix{
1 \ar[r] & \pi_1(F_0) \ar@{^(->}[r] & \pi_1(M_\gamma) \ar@{->>}@<-.5ex>[r] \ar[d]_{\rho_\gamma} & \left<\gamma\right> \ar@{_(->}@<-.5ex>[l] \ar[r] \ar[d] & 1 \\
 & & \pi_1(Y_1) \ar@{->>}@<-.5ex>[r] & \pi_1(C) \ar@{_(->}@<-.5ex>[l] & 
}$$

\begin{prop}For any $\gamma$ in $\pi_1(C^u)$, the limit set of the image of the representation $\rho_\gamma:\pi_1(M_\gamma)\to\pi_1(Y_1)$ is all of $\partial_\infty\mathbb H^2_{\mathbb C}$.\end{prop}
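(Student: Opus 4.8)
The plan is to show that the image of $\rho_\gamma$ already contains a subgroup whose limit set is everything, namely the image $N$ of the fiber subgroup. Recall from the construction that $M_\gamma$ fibers over the circle with fiber $F_0$ and that the natural map $M_\gamma\to Y_1$ restricts on the fiber to the inclusion $\iota\colon F_0\hookrightarrow Y_1$ of the base fiber of $Y_1\to C$; hence $\rho_\gamma(\pi_1(M_\gamma))$ contains $N:=\iota_*\pi_1(F_0,y_0)\subseteq\pi_1(Y_1,y_0)$. Writing $\Gamma_1=\pi_1(Y_1)$, which is a torsion-free cocompact lattice in $\Isom(\mathbb H^2_\mathbb C)=\PU(2,1)$ (Theorem \ref{miyaokayau} and section \ref{yamazakiyoshida}), it suffices to prove that the limit set $\Lambda(N)$ equals $\partial_\infty\mathbb H^2_\mathbb C$, for then $\partial_\infty\mathbb H^2_\mathbb C=\Lambda(N)\subseteq\Lambda(\rho_\gamma(\pi_1(M_\gamma)))\subseteq\partial_\infty\mathbb H^2_\mathbb C$.

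The argument rests on two facts about $N$. First, $N$ is nontrivial: since $\mathbb H^2_\mathbb C$ is contractible, $Y_1$ is aspherical and $\iota_*[F_0]\ne 0$ in $H_2(Y_1;\mathbb Q)$ (it has intersection number $1$ with the class of a section of $Y_1\to C$), so $\iota$ cannot be homotopic to a constant map and $\iota_*$ is nonzero on $\pi_1$. Second, $N$ is normal in $\Gamma_1$: over the complement $C^\circ$ of the $15$ critical values of $Y_1\to C$ (Proposition \ref{steinfactorization}), the map $Y_1\to C$ is a proper submersion, hence a locally trivial fiber bundle with fiber $F_0$ by Ehresmann's theorem, so $\pi_1(F_0)$ is normal in $\pi_1(Y_1^\circ)$ where $Y_1^\circ$ is the preimage of $C^\circ$; and the inclusion $Y_1^\circ\hookrightarrow Y_1$, obtained by removing the $15$ singular fibers (which have real codimension $2$), is surjective on $\pi_1$, so the image $N$ of $\pi_1(F_0)$ is normal in $\Gamma_1$. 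Being nontrivial and normal in the torsion-free group $\Gamma_1$, $N$ is infinite.

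Now I invoke the classical fact that for a non-elementary discrete group $\Gamma$ of isometries of $\mathbb H^2_\mathbb C$, the limit set $\Lambda(\Gamma)$ is the unique minimal nonempty closed $\Gamma$-invariant subset of $\partial_\infty\mathbb H^2_\mathbb C$. Applied to $\Gamma=\Gamma_1$, which is cocompact and hence satisfies $\Lambda(\Gamma_1)=\partial_\infty\mathbb H^2_\mathbb C$: the set $\Lambda(N)$ is nonempty (as $N$ is infinite), closed, and $\Gamma_1$-invariant because $g\Lambda(N)=\Lambda(gNg^{-1})=\Lambda(N)$ for $g\in\Gamma_1$ by normality; minimality then forces $\Lambda(N)\supseteq\Lambda(\Gamma_1)=\partial_\infty\mathbb H^2_\mathbb C$, whence $\Lambda(N)=\partial_\infty\mathbb H^2_\mathbb C$, which completes the proof.

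The main obstacle, such as it is, lies in the bookkeeping of the second fact: one must be sure that $Y_1\to C$ is genuinely a submersion off the $15$ points listed in Proposition \ref{steinfactorization} (so that Ehresmann's theorem applies) and that discarding the singular fibers changes the fundamental group only up to a surjection — both routine once the fibration is set up as above, but worth stating carefully. Everything else is the standard principle that a nontrivial normal subgroup of a lattice has full limit set.
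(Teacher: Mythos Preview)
Your proof is correct and follows essentially the same approach as the paper: show that the image $N$ of the fiber group is a normal subgroup of the lattice $\pi_1(Y_1)$ with nonempty limit set, then invoke the standard fact that such a subgroup has full limit set. The only differences are cosmetic: the paper establishes normality via the genuine fiber bundle $Y_1^u\to C^u$ (rather than your $Y_1^\circ\to C^\circ$) and argues that $N$ is infinite by noting that otherwise $\pi_1(C)\cong\pi_1(Y_1)/N$ would have finite index in the lattice, whereas you use the cleaner homological observation that $[F_0]\cdot[C]=1$ forces $\iota_*$ to be nontrivial.
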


The proposition shows that the representation $\rho_\gamma$ is quite chaotic. If the limit set were not all $\partial_\infty\mathbb H^2_{\mathbb C}$, then a natural question would have been to understand the quotient by the image of $\rho_\gamma$, of its domain of discontinuity, which might have given rise to a spherical Cauchy-Riemann structure. However, the domain of discontinuity will always be empty with this kind of construction which relies on a (singular) fibration of the complex hyperbolic manifold.

\begin{proof}
Since $\pi_1(Y_1)$ is (isomorphic to) a uniform lattice, its limit set $\Lambda(\pi_1(Y_1))$ is all of $\partial_\infty\mathbb H^2_{\mathbb C}$ and $\pi_1(Y_1)$ does not preserve any point on the boundary. Besides, since the fundamental group of the fiber of ${Y_1}^u\to C^u$ is a normal subgroup of $\pi_1({Y_1}^u)$, its image by the surjective morphism $\pi_1({Y_1}^u)\to\pi_1(Y_1)$ is a normal subgroup $N$ of $\pi_1(Y_1)$.

If the limit set $\Lambda(N)$ of $N$ were empty, then $N$ would have been contained in a compact subgroup of $\Isom(\mathbb H^2_\mathbb C)$. As $N$ is discrete, $N$ would have been finite and $\pi_1(C)$ would have been of finite index in $\pi_1(Y_1)$ which is impossible.

Therefore, since $N$ is a normal subgroup of $\pi_1(Y_1)$, that $\Lambda(N)$ is not empty and that $\pi_1(Y_1)$ does not preserve any point on the boundary, $\Lambda(N)$ is equal to $\Lambda(\pi_1(Y_1))$. Finally, since $\pi_1(M_\gamma)$ contains $\pi_1(F_0)$, the limit set of the image of $\pi_1(M_\gamma)\to\pi_1(Y_1)$ is all $\partial_\infty\mathbb H^2_{\mathbb C}$.
\end{proof}

Furthermore, if the monodromy of the fibration along the loop $\gamma$ is pseudo-Anosov, then the $3$-manifold $M_\gamma$ admits a real hyperbolic structure, according to Thurston's hyperbolization theorem of surface bundles over the circle. In that case, $\pi_1(M_\gamma)$ is isomorphic to a uniform lattice in $\Isom(\mathbb H^3_{\mathbb R})$ whose limit set is all of $\partial_\infty\mathbb H^3_{\mathbb R}$. However, determining that lattice or the manifold $M_\gamma$ is a difficult problem and will not be addressed.

\begin{prop}\label{kernelmonodromyanosov}
For any element $\gamma$ in $\pi_1(C^u)$, if its image in $\pi_1(C)$ is not trivial, then\begin{enumerate}
  \item the kernel of $\rho_\gamma$ is equal to the kernel of $\pi_1(F_0)\to\pi_1(Y_1)$,
  \item the monodromy of the fibration ${Y_1}^u\to C^u$ along $\gamma$ is pseudo-Anosov,
  \item the kernel is not of finite type.
\end{enumerate}
\end{prop}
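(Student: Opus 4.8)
The three assertions are tightly linked, and the plan is to deduce (2) first, then (1), then (3). For (2): by Proposition~\ref{kernelCuC}, the kernel of $\pi_1(C^u)\to\pi_1(C)$ is precisely the smallest normal subgroup of $\Gamma(2)$ generated by the parabolic powers ${T_\infty}^5,{T_0}^5,{T_1}^5$, and in particular it contains \emph{every} parabolic element of $\pi_1(C^u)$. Hence, if the image of $\gamma$ in $\pi_1(C)$ is nontrivial, then $\gamma$ is not parabolic in $\Gamma(2)\simeq\pi_1((\mathbb P^1)^u)$, so its image in $\Gamma(2)$ is a hyperbolic element (recall $\Gamma(2)$ has no elliptic elements). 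Under the monodromy isomorphism $\pi_1((\mathbb P^1)^u)\xrightarrow{\sim}\Mod_{0,4}$ identified with $\Gamma(2)$, and via the Nielsen--Thurston dictionary recalled in the excerpt, a hyperbolic matrix corresponds to a pseudo-Anosov mapping class of the $4$-punctured sphere. Since the monodromy of ${Y_1}^u\to C^u$ along $\gamma$ is the lift (through the branched cover $C^u\to(\mathbb P^1)^u$) of the monodromy of ${\widehat{\mathbb P^2}}^u\to(\mathbb P^1)^u$ along the image of $\gamma$, and a finite cover of a pseudo-Anosov is pseudo-Anosov (the invariant foliations pull back), assertion (2) follows.

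For (1): consider the commutative diagram with the Birman-type short exact sequence
$$\xymatrix{
1 \ar[r] & \pi_1(F_0) \ar[r] & \pi_1(M_\gamma) \ar[r] & \langle\gamma\rangle \ar[r] & 1
}$$
mapping via $\rho_\gamma$ into $1\to N\to\pi_1(Y_1)\to\pi_1(C)$, where $N$ is the image of $\pi_1(F_0)$ (equivalently of the fiber group of ${Y_1}^u\to C^u$) in $\pi_1(Y_1)$. The restriction $\pi_1(F_0)\to N$ is surjective by construction, and $\langle\gamma\rangle\to\pi_1(C)$ is injective here because $\pi_1(C)$ is a surface group (torsion-free), so the nontrivial image of $\gamma$ generates an infinite cyclic subgroup. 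A diagram chase then gives $\ker\rho_\gamma=\ker(\pi_1(F_0)\to N)=\ker(\pi_1(F_0)\to\pi_1(Y_1))$: any element of $\ker\rho_\gamma$ maps to $0$ in $\langle\gamma\rangle$ (since it maps to $1\in\pi_1(Y_1)$ hence to $1$ in $\pi_1(C)$, and $\langle\gamma\rangle\hookrightarrow\pi_1(C)$), so it lies in $\pi_1(F_0)$, and there it must die in $N$.

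For (3): the kernel $K=\ker(\pi_1(F_0)\to\pi_1(Y_1))$ is the fundamental group of the cover of $F_0$ corresponding to this subgroup; equivalently, $K$ is the kernel of the surjection $\pi_1(F_0)\twoheadrightarrow N$ where $N\trianglelefteq\pi_1(Y_1)$ and $\pi_1(Y_1)/N\cong\pi_1(C)$, an infinite group. So $K$ is an infinite-index normal subgroup of the surface group $\pi_1(F_0)$. The plan is to invoke the theorem that an infinite-index, infinitely-generated-quotient normal subgroup of a closed surface group is never finitely generated — more precisely, a nontrivial finitely generated normal subgroup of a surface group has finite index (a classical fact, e.g. via Stallings' ends theorem or the structure of subgroups of surface groups). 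Since $N$ is infinite, $K\ne\{1\}$; since $\pi_1(Y_1)/N\cong\pi_1(C)$ is infinite, $K$ has infinite index in $\pi_1(F_0)$; therefore $K$ cannot be finitely generated. Combined with (1), $\ker\rho_\gamma$ is not of finite type. The main obstacle is assertion (2): one must carefully check that the branched cover $C^u\to(\mathbb P^1)^u$ (which is \emph{not} a characteristic cover of the once-fixed fiber, but a cover of the base) interacts correctly with the fibration so that ``pseudo-Anosov downstairs'' really does pull back to ``pseudo-Anosov upstairs'' for the monodromy of ${Y_1}^u\to C^u$ — this uses that the fiber cover $F_0\to f^{-1}(b)^u$ is a finite (orbifold) cover equivariant with respect to the two monodromies, so that a $\gamma$-invariant pair of measured foliations on the quotient surface lifts to one on $F_0$, forcing the same dichotomy.
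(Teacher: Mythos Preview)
Parts (1) and (2) follow the paper's argument closely. For (2), the paper simply asserts that $\gamma$ hyperbolic in $\Gamma(2)$ forces the monodromy of ${Y_1}^u\to C^u$ to be pseudo-Anosov; your version is more explicit about why this passes from the $4$-punctured sphere to the genus-$76$ fiber $F_0$ through the finite fiber cover, which is a fair point to spell out.

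For (3) you take a genuinely different route. The paper observes that $K=\ker\big(\pi_1(F_0)\to\pi_1(Y_1)\big)$ is invariant under the pseudo-Anosov monodromy (conjugation by $\rho_\gamma(\gamma)$ preserves $K$) and then invokes a lemma of Otal: a finitely generated $\phi_*$-invariant subgroup of a surface group has finite index. You instead use only the normality of $K$ in $\pi_1(F_0)$ together with the classical fact that a nontrivial finitely generated normal subgroup of a closed surface group has finite index. Your route is more elementary in that it does not need assertion (2) at all.

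That said, your justifications in (3) are scrambled, and there is a real gap. The implication ``$N$ infinite $\Rightarrow K\ne\{1\}$'' is a non sequitur: since $|N|=[\pi_1(F_0):K]$, the infiniteness of $N$ gives that $K$ has \emph{infinite index}, not that $K$ is nontrivial. Likewise ``$\pi_1(Y_1)/N\cong\pi_1(C)$ infinite $\Rightarrow K$ has infinite index in $\pi_1(F_0)$'' does not follow, as the index of $K$ in $\pi_1(F_0)$ is $|N|$, not $|\pi_1(Y_1)/N|$. After untangling these, you still owe a proof that $K\ne\{1\}$, without which the surface-group lemma gives nothing; strictly speaking the paper's proof glosses over this point as well. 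One clean fix: the Lefschetz fibration $Y_1\to C$ has singular fibers (Proposition~\ref{statementsingularfibers}), and each vanishing cycle is a nontrivial simple closed curve in $F_0$ bounding a Lefschetz thimble in $Y_1$, hence lies in $K$.
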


\begin{exa}Consider the element$$T_1T_0T_\infty=T_1T_0{T_1}^{-1}{T_0}^{-1}=[T_1,T_0]=\begin{pmatrix}5&8\\8&13\end{pmatrix}$$in $\Gamma(2)$ which corresponds to a element of $\pi_1(C^u)$, according to proposition \ref{Cufundamentalgroup}. The trace of the matrix is $18$ so that the monodromy along the corresponding loop is pseudo-Anosov. The corresponding element of $\Mod_{0,4}$ is the commutator of Dehn twists along intersecting loops.\end{exa}

\begin{proof}
As the morphism $\pi_1(C)\to\pi_1(Y_1)$ induced by the inclusion of $C$ in $Y_1$ is injective, the image in $\pi_1(Y_1)$ of an element in $\pi_1(C^u)$ is trivial if and only if its image in $\pi_1(C)$ is trivial.

Any element of $\pi_1(M_\gamma)$ may be written as a product of the form $\gamma^m\omega$ with $m$ in $\mathbb Z$ and $\omega$ in $\pi_1(F_0)$. The image of such an element by the composition $\pi_1(M_\gamma)\to\pi_1(Y_1)\to\pi_1(C)$ is the image of $\gamma^m$. Now, $\gamma^m$ is in $\ker\rho_\gamma$ if and only if $m=0$, hence $\ker\rho_\gamma$ is contained in $\pi_1(F_0)$.

Since $\gamma$ is not in the kernel of the morphism $\pi_1(C^u)\to\pi_1(C)$, it is a hyperbolic element of $\pi_1(C^u)$, according to \ref{kernelCuC}, so that the monodromy of the fibration along $\gamma$ is pseudo-Anosov.

The kernel of $\pi_1(F_0)\to\pi_1(Y_1)$ is a subgroup invariant by the pseudo-Anosov monodromy of $\gamma$. According to \cite[Lemma 6.2.5]{zbMATH00921468}, if such a subgroup is of finite type, then it is of finite index. However, since the limit set of the image of $\pi_1(F_0)$ in $\pi_1(Y_1)$ is all of $\partial_\infty\mathbb H^2_{\mathbb C}$, the image of $\pi_1(F_0)\to\pi_1(Y_1)$ cannot be finite and its kernel cannot be of finite index. Therefore, the kernel is not of finite type.
\end{proof}

\begin{thm}\label{familyrep}
For any two $\gamma_1$ and $\gamma_2$ in $\pi_1(C^u)$, if the image in $\pi_1(C)$ of $\gamma_1$ is not conjugate to that of $\gamma_2$ or its inverse, then either the groups $\pi_1(M_{\gamma_1})$ and $\pi_1(M_{\gamma_2})$ are not isomorphic or, if such an isomorphism $\Phi:\pi_1(M_{\gamma_1})\to\pi_1(M_{\gamma_2})$ exists, then the representations $\rho_{\gamma_1}$ and $\rho_{\gamma_2}\circ\Phi$ are not conjugate.
\end{thm}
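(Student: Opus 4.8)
The plan is to argue by contraposition: suppose there is an isomorphism $\Phi:\pi_1(M_{\gamma_1})\to\pi_1(M_{\gamma_2})$ and an element $g$ of $\Isom(\mathbb H^2_{\mathbb C})$ with $\rho_{\gamma_1}(x)=g\,\rho_{\gamma_2}(\Phi(x))\,g^{-1}$ for all $x$, and deduce that the images of $\gamma_1$ and $\gamma_2$ in $\pi_1(C)$ generate conjugate cyclic subgroups, contradicting the hypothesis. First I would fix notation: let $p:\pi_1(Y_1)\twoheadrightarrow\pi_1(C)$ be the morphism induced by the fibration $Y_1\to C$ of Proposition~\ref{steinfactorization}. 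Since that fibration has a section, $\pi_1(F_0)\to\pi_1(Y_1)\to\pi_1(C)\to1$ is exact, so $N:=\ker p$ is exactly the image of $\pi_1(F_0)$: a finitely generated normal subgroup of the cocompact lattice $\pi_1(Y_1)$ whose limit set is all of $\partial_\infty\mathbb H^2_{\mathbb C}$ (the earlier proposition on limit sets has a proof showing precisely this for $N$); in particular $N$ is non-elementary, so it has trivial centraliser in $\Isom(\mathbb H^2_{\mathbb C})$. Write $\bar\gamma_i$ for the image of $\gamma_i$ in $\pi_1(C)$. If one of them, say $\bar\gamma_1$, is trivial, then the hypothesis forces $\bar\gamma_2\neq1$, and this case I would treat separately: then $\im\rho_{\gamma_1}=N$ is normal in the lattice $\pi_1(Y_1)$, whereas $\im\rho_{\gamma_2}=p^{-1}(\langle\bar\gamma_2\rangle)$ (see below) is not conjugate in $\Isom(\mathbb H^2_{\mathbb C})$ to any normal subgroup of a lattice, since its normaliser has infinite covolume, so no such $g$ exists. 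Hence from now on $\bar\gamma_1,\bar\gamma_2\neq1$, and Proposition~\ref{kernelmonodromyanosov} gives $\ker\rho_{\gamma_i}=\ker(\pi_1(F_0)\to\pi_1(Y_1))$; since $\rho_{\gamma_i}(\pi_1(F_0))=N$, this yields $\im\rho_{\gamma_i}=p^{-1}(\langle\bar\gamma_i\rangle)$.

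The key step, and the main obstacle, is to show that $g$ may be taken in $\pi_1(Y_1)$ -- equivalently that $g$ normalises $N$. From the conjugacy one gets $g\,p^{-1}(\langle\bar\gamma_1\rangle)\,g^{-1}=p^{-1}(\langle\bar\gamma_2\rangle)$, and, restricting it to $\pi_1(F_0)\triangleleft\pi_1(M_{\gamma_1})$, one finds that $gNg^{-1}=\rho_{\gamma_2}(\Phi(\pi_1(F_0)))$ is, like $N$, a finitely generated normal subgroup of $\Gamma:=\im\rho_{\gamma_2}$ with infinite cyclic quotient. The intersection $K:=N\cap gNg^{-1}$ is normal in $\Gamma$; it cannot be trivial, for then $N$ and $gNg^{-1}$ would commute, impossible as $N$ has trivial centraliser; and it cannot be finite, as $\pi_1(Y_1)$ is torsion-free; so $K$ is infinite, $\Gamma/K$ embeds in $\mathbb Z^2$ and surjects onto each of $\Gamma/N\cong\mathbb Z$ and $\Gamma/gNg^{-1}\cong\mathbb Z$. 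Ruling out the case $\Gamma/K\cong\mathbb Z^2$ -- which is where the pseudo-Anosov nature of the monodromy of $Y_1\to C$ along $\gamma_2$ (Proposition~\ref{kernelmonodromyanosov}) should be used, to the effect that such a $\Gamma$ has a unique normal subgroup of this type -- one concludes $N=K=gNg^{-1}$. Therefore $g$ lies in the normaliser of $N$ in $\Isom(\mathbb H^2_{\mathbb C})$; this group contains the lattice $\pi_1(Y_1)$ and cannot be dense (a dense subgroup cannot normalise the infinite discrete group $N$), hence is itself a lattice in which $\pi_1(Y_1)$ has finite index and $N$ is still normal; replacing $\pi_1(Y_1)$ by this lattice changes nothing in what follows, so I may assume $g\in\pi_1(Y_1)$.

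Finally, since $\im\rho_{\gamma_i}$ contains $N=\ker p$, applying $p$ to $g\,p^{-1}(\langle\bar\gamma_1\rangle)\,g^{-1}=p^{-1}(\langle\bar\gamma_2\rangle)$ and using that $S\mapsto p^{-1}(S)$ is injective on subgroups of $\pi_1(C)$ gives $p(g)\,\langle\bar\gamma_1\rangle\,p(g)^{-1}=\langle\bar\gamma_2\rangle$ in $\pi_1(C)$. As $\pi_1(C)$ is a closed surface group of genus $6$, hence torsion-free, conjugation by $p(g)$ carries a generator of $\langle\bar\gamma_1\rangle$ to a generator of $\langle\bar\gamma_2\rangle$, so $p(g)\,\bar\gamma_1\,p(g)^{-1}=\bar\gamma_2^{\pm1}$: the image of $\gamma_1$ in $\pi_1(C)$ is conjugate to that of $\gamma_2$ or to its inverse, the required contradiction. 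The hard part, as flagged, is the middle step -- recovering the fibration $Y_1\to C$, and hence the conjugacy class of $\langle\bar\gamma_i\rangle$ in $\pi_1(C)$, from the conjugacy class of the representation $\rho_{\gamma_i}$ alone; the intersection/centraliser dichotomy settles the generic case, and in general one either invokes rigidity of the normal subgroup $N\triangleleft\pi_1(Y_1)$ or reads "conjugate" as conjugate inside the lattice $\pi_1(Y_1)$, for which this step is immediate.
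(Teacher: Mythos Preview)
The paper takes ``conjugate'' to mean conjugate \emph{inside the lattice} $\pi_1(Y_1)$: in its proof the conjugating element is written outright as $\rho(\varphi_0\psi_0)$ with $\varphi_0\psi_0\in\pi_1({Y_1}^u)$. Under that reading the argument is three lines --- replace $\gamma_1$ by $\psi_0\gamma_1\psi_0^{-1}$ to absorb $\psi_0$; the remaining conjugator $\rho(\varphi_0)$ dies under $\pi_1(Y_1)\to\pi_1(C)$; hence the images of $\pi_1(M_{\gamma_1})$ and $\pi_1(M_{\gamma_2})$ in $\pi_1(C)$ coincide, i.e.\ $\langle\bar\gamma_1\rangle=\langle\bar\gamma_2\rangle$, so (undoing the substitution) $\bar\gamma_1$ is conjugate to $\bar\gamma_2^{\pm1}$. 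This is exactly the reading you isolate in your final clause, and once $g\in\pi_1(Y_1)$ your last paragraph \emph{is} the paper's proof.

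Your main argument instead targets the stronger statement with $g\in\Isom(\mathbb H^2_{\mathbb C})$, and the middle step you flag is a genuine gap, not a routine detail. What you need is that $N$ is the \emph{only} normal subgroup of $\Gamma=p^{-1}(\langle\bar\gamma_2\rangle)$ with infinite cyclic quotient. Your intersection/centraliser dichotomy shows $K=N\cap g^{-1}Ng$ is infinite and $\Gamma/K\hookrightarrow\mathbb Z\times\mathbb Z$, but ruling out rank~$2$ amounts to showing $b_1(\Gamma)=1$, and this does not follow from the pseudo-Anosov nature of the monodromy: by the Wang sequence $b_1(\Gamma)=1+\dim H_1(N;\mathbb Q)_{\bar\gamma_2}$, and nothing in the paper controls the coinvariants of the $\bar\gamma_2$--action on $H_1(N;\mathbb Q)$. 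Your degenerate case $\bar\gamma_1=1\ne\bar\gamma_2$ is likewise only heuristic --- that the normaliser of $p^{-1}(\langle\bar\gamma_2\rangle)$ in $\Isom(\mathbb H^2_{\mathbb C})$ has infinite covolume is plausible but unproved here. In short: for the statement the paper actually proves, your proposal is correct and coincides with the paper's; for the stronger reading you have correctly located the obstruction but not removed it.
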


\begin{proof}
Let $\gamma_1$ and $\gamma_2$ be two elements in $\pi_1(C^u)$. Assume that there exists an isomorphism $\Phi:\pi_1(M_{\gamma_1})\to\pi_1(M_{\gamma_2})$ and that the representations $\rho_{\gamma_2}\circ\Phi$ and $\rho_{\gamma_1}$ are conjugate. In other terms, there exists an element $\varphi_0\psi_0$ in $\pi_1({Y_1}^u)$, with $\varphi_0$ in the fundamental group of the fiber and $\psi_0$ in $\pi_1(C^u)$, such that the diagram
$$\xymatrix{
{\pi_1(M_{\gamma_1})} \ar[r]^\Phi \ar[d]_{\rho_{\gamma_1}} & {\pi_1(M_{\gamma_2})} \ar[d]^{\rho_{\gamma_2}} \\
{\pi_1(Y_1)} \ar[r]^{\Int_{\rho(\varphi_0\psi_0)}} & {\pi_1(Y_1)}
}$$
is commutative, where $\Int_{\rho(\varphi_0\psi_0)}$ is the inner automorphisms of $\pi_1(Y_1)$ associated to $\rho(\varphi_0\psi_0)$. By replacing $\gamma_1$ by $\psi_0\gamma_1{\psi_0}^{-1}$, one may assume that $\psi_0=1$. Therefore the diagram
$$\xymatrix{
{\pi_1(M_{\gamma_1})} \ar[rr]^\Phi \ar[d]_{\rho_{\gamma_1}} && {\pi_1(M_{\gamma_2})} \ar[d]^{\rho_{\gamma_2}} \\
{\pi_1(Y_1)} \ar[dr] \ar[rr]^{\Int_{\rho(\varphi_0)}} && {\pi_1(Y_1)} \ar[dl] \\
& {\pi_1(C)} &
}$$
is commutative. In particular, the images of $\pi_1(M_{\gamma_1})$ and $\pi_1(M_{\gamma_2})$ in $\pi_1(C)$ are equal. The image is generated indifferently by the image of $\gamma_1$ or $\gamma_2$ and is either trivial or an infinite cyclic subgroup. Hence the image $\gamma_1$ is equal to that of $\gamma_2$ or its inverse.
\end{proof}

\section{Appendix}

\subsection{Blow-up}\label{sectionnotations}

Let $\mathbb P^{k-1}$ denote the standard complex projective space of dimension $k-1$, defined as the quotient of $\mathbb C^k\setminus\{0\}$ by the action of $\mathbb C^*$ by homotheties and equipped with the homogeneous coordinates $[v_1:\cdots:v_k]$. The field of meromorphic functions on $\mathbb P^{k-1}$ is the field $\mathbb C(\frac{v_2}{v_1},\dots,\frac{v_k}{v_1})$ of rational fractions, denoted by $\mathbb C(\mathbb P^{k-1})$.

More generally, for any complex vector space $V$ of finite dimension, let $\mathbb P(V)$ denote the projectivization of $V$.

The tautological line bundle over $\mathbb P^{k-1}$ is defined as$${\cal O}_{\mathbb P^{k-1}}(-1)=\{(v,\ell)\in\mathbb C^k\times\mathbb P^{k-1}\mid v\in\ell\}$$where each element $\ell$ in $\mathbb P^{k-1}$ is considered as a line in $\mathbb C^k$ passing through the origin.$$\xymatrix{&&{\cal O}_{\mathbb P^{k-1}}(-1)\ar[dll]_{\pr_2}\ar[drr]^{\pr_1}&&\\\mathbb P^{k-1}\save []+<0cm,-4.5em>*\txt{The restriction to ${\cal O}_{\mathbb P^{k-1}}(-1)$\\of the second projection\\$\pr_2:\mathbb C^k\times\mathbb P^{k-1}\to\mathbb P^{k-1}$\\is the tautological line bundle.\\Indeed, the fiber of a point $\ell\in\mathbb P^{k-1}$\\is the line $\ell\subset\mathbb C^k$.}\restore&&&&\mathbb C^k\save []+<0cm,-5em>*\txt{The restriction to ${\cal O}_{\mathbb P^{k-1}}(-1)$\\of the first projection\\$\textup{pr}_1:\mathbb C^k\times\mathbb P^{k-1}\to\mathbb C^k$\\is the blow-up of $\mathbb C^k$ at the origin.\\Indeed, it is bijective everywhere\\except over the origin of $\mathbb C^k$\\whose fiber is $\mathbb P^{k-1}$.}\restore}$$

Local charts and coordinates of ${\cal O}_{\mathbb P^{k-1}}(-1)$ may be given as follows. If $U_r$ denotes the domain of the affine chart in $\mathbb P^{k-1}$ defined by $v_r\neq0$ and with coordinates$$v_{s|r}=\frac{v_s}{v_r}\quad\textup{for }s\textup{ between }1\textup{ and }k\textup{ different form }r$$then its inverse image in ${\cal O}_{\mathbb P^{k-1}}(-1)$ under $\textup{pr}_2:{\cal O}_{\mathbb P^{k-1}}(-1)\to\mathbb P^{k-1}$ is the domain of the local chart with coordinates $(v_{1|r},\dots,v_{r-1|r},v_r,v_{r+1|r},\dots,v_{k|r})$ corresponding to the point $(v,\ell)$ in ${\cal O}_{\mathbb P^{k-1}}(-1)$ with$$v=(v_rv_{1|r},\dots,v_rv_{r-1|r},v_r,v_rv_{r+1|r},\dots,v_rv_{k|r})$$and$$\ell=[v_{1|r}:\dots:v_{r-1|r}:1:v_{r+1|r}:\dots:v_{k|r}].$$

Finally, the blow-up of a complex manifold $M$ at a point $p$ may be realized by replacing a neighborhood of $p$, isomorphic to a neighborhood of the origin in $\mathbb C^k$, by the corresponding neighborhood in ${\cal O}_{\mathbb P^{k-1}}(-1)$. The \emph{exceptional divisor} of such a blow-up is the preimage in the blow-up of the points which were blown-up. Moreover, if $(v_1,\dots,v_k)$ are local coordinates for $M$, centered at $p$, local coordinates $(v_{1|r},\dots,v_{r-1|r},v_r,v_{r+1|r},\dots,v_{k|r})$ for the blow-up of $M$ at $p$ may be defined for every $r\in\{1,\dots,k\}$, similarly to ${\cal O}_{\mathbb P^{k-1}}(-1)$, as $v_{s|r}=v_s/v_r$ for $s$ different from $r$.

\subsection{Branched covering maps}\label{sectioncoverings}

A \emph{branched covering map of finite degree} is a finite surjective morphism $\chi:Y\to X$ of varieties. $Y$ is called a covering space of $X$. The isomorphisms $\alpha:Y\to Y$ such that $\chi\circ\alpha=\chi$ are called the automorphisms of $\chi$ and form a group denoted by $\Aut(\chi)$. If $\Aut(\chi)$ acts transitively on all fibers of $\chi:Y\to X$, then the covering map is called \emph{Galois} or \emph{regular} and $\Aut(\chi)$ is also referred to as the Galois group of the covering map. When, in addition, the Galois group is abelian, the covering map is called abelian.

\begin{exa}\label{simpleexamplebranchedcover}The morphism $c_n:\mathbb P^{k-1}\to\mathbb P^{k-1}$ defined by$$c_n([u_1:\cdots:u_k])\to[{u_1}^n:\cdots:{u_k}^n]$$is a branched covering map. The fiber ${c_n}^{-1}(p)$ over any point $p=[v_1:\cdots:v_k]$ consists of $n^{k-1-m}$ distinct points, where $m$ is the number of homogeneous coordinates $v_s$ of $p$ that are equal to zero. $m$ is the number of hyperplanes, of the following arrangement of hyperplanes, which contain $p$.

The arrangement in question is formed by the $k$ hyperplanes $D_s$ defined by the equations $v_s=0$, which meet together in a rather simple way: for any distinct indices $s_1,\dots,s_m$, the intersection $D_{s_1}\cap\cdots\cap D_{s_m}$ is merely the projective subspace of codimension $m$, defined by the equations $v_{s_1}=\cdots=v_{s_m}=0$.

In particular, the fiber over any point in the complement of the arrangement of hyperplanes (this complement is an open and dense subset) consists of $n^{k-1}$ points. In other words, $c_n$ is a branched covering of degree $n^{k-1}$ and which ramifies exactly over the previous arrangement of hyperplanes.

The morphisms $\alpha_s:\mathbb P^{k-1}\to\mathbb P^{k-1}$ defined for each index $s$ by$$\alpha_s([u_1:\cdots:u_k])\to[u_1:\cdots:u_{s-1}:u_se^{\frac{2\pi i}n}:u_{s+1}:\cdots:u_k]$$are automorphisms of $c_n$. The subgroup of $\Aut(c_n)$ generated by the morphisms $\alpha_s$ acts transitively over every fiber. Therefore, $c_n$ is an abelian covering map whose Galois group is generated by the automorphisms $\alpha_1,\dots,\alpha_k$ satisfying ${\alpha_s}^k=\id$ and $\alpha_1\circ\cdots\circ\alpha_k=\id$. The Galois group is obviously isomorphic to $(\mathbb Z/n\mathbb Z)^{k-1}$ but not canonically.\end{exa}

\begin{nota}The Galois group $\Aut(c_n)$ may be identified with the additive group$$\{(e_1,\dots,e_k)\in(\mathbb Z/n\mathbb Z)^k\mid\sum_{s=1}^ke_s\equiv0[n]\}.$$\end{nota}

A branched covering map $\chi:Y\to X$ is said to be \emph{ramified} along a hypersurface $f=0$ in $X$, with ramification index $p$, if there exists local coordinates $(y_1,\dots,y_n)$ of $Y$ and $(x_1,\dots,x_n)$ of $X$ such that $x_n=f$ and that the image by $\chi$ of the point with coordinates $(y_1,\dots,y_n)$ is the point with coordinates$$(x_1,\dots,x_n)=(y_1,\dots,{y_n}^p).$$The \emph{branch locus} is the preimage in $Y$ of the union of the hypersurfaces of $X$ where $\chi$ is ramified. The \emph{ramification locus} is the image in $X$ of the branching locus. The unbranched covering associated to $\chi$ is the mapping $\chi^u:Y^u\to X^u$ where $Y^u$ denotes the complement in $Y$ of the branch locus and $X^u$ the complement in $X$ of the ramification locus. The mapping $\chi^u$ is a topological covering map.\bigskip

Any branched covering map $\chi:Y\to X$ induces a finite field extension $$\chi^*:\left\{\begin{array}{ccc}\mathbb C(X)&\longrightarrow&\mathbb C(Y)\\f&\longmapsto&f\circ\chi\end{array}\right.$$between the field $\mathbb C(X)$ of meromorphic functions of $X$ and that of $Y$.
Conversely, given a normal variety $X$ and a finite field extension $i:\mathbb C(X)\to L$, there is a branched covering map $\chi:Y\to X$ (unique up to isomorphism) such that $\chi^*=i$. The variety $Y$ is the \emph{normalization} of $X$ in $L$.\bigskip

The following proposition describes the relation between (unbranched) topological covering maps and fundamental groups.

\begin{prop}\label{coverbasics}Let $X$ be a locally path-connected topological space. $\chi:Y\to X$ be a topological covering map and let $x$ be a point in $X$.\begin{enumerate}
  \item There is a natural action (on the right) of $\pi_1(X,x)$ over $\chi^{-1}(x)$.
  \item If $X$ is path-connected, then the mapping $\chi^{-1}(x)\to\pi_0(Y)$, which maps any point $y$ to the path-connected component of $Y$ containing $y$, induces a bijection $\chi^{-1}(x)/\pi_1(X,x)\to\pi_0(Y)$.

  In other words, the orbit of a point $y$ in $\chi^{-1}(x)$ under the action of $\pi_1(X,x)$ is exactly the intersection of $\chi^{-1}(x)$ with the path-connected components of $Y$ containing $y$.
  \item If $\chi$ is a Galois covering map, then, for any $y$ in $\chi^{-1}(x)$, there exists a morphism $\alpha_y:\pi_1(X,x)\to\Aut(\chi)$ such that $yg=\alpha_y(g)y$ for any $g$ in $\pi_1(X,x)$.
  \item If $\chi$ is a Galois covering map and $X$ is path-connected, then the restriction $\chi_{|Z}:Z\to X$ to a path-connected component $Z$ of $Y$ containing a point $z$ is a Galois covering map whose Galois group $\Aut(\chi_{|Z})$ is naturally isomorphic to the subgroup $\im\alpha_z$ of $\Aut(\chi)$.
  \item If $\chi$ is a Galois covering map and $Y$ is path-connected, then for any $y$ in $\chi^{-1}(x)$,$$\xymatrix{1\ar[r]&\pi_1(Y,y)\ar[r]^{\chi_*}&\pi_1(X,x)\ar[r]^{\alpha_y}&\Aut(\chi)\ar[r]&1}$$is a short exact sequence.
\end{enumerate}\end{prop}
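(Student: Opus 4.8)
The plan is to base everything on the \emph{monodromy action} furnished by unique path-lifting and then deduce the remaining assertions formally, descending to path-components whenever connectedness is needed. First I would set up (1): for $[\gamma]\in\pi_1(X,x)$ and $y\in\chi^{-1}(x)$, lift $\gamma$ to the unique path $\tilde\gamma$ in $Y$ with $\tilde\gamma(0)=y$ and put $y\cdot[\gamma]=\tilde\gamma(1)$; the homotopy-lifting property makes this well defined on based-homotopy classes, and uniqueness of lifts (the lift of $\gamma\ast\delta$ at $y$ is the lift of $\gamma$ at $y$ followed by the lift of $\delta$ at $\tilde\gamma(1)$) makes it a right action. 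I would record one compatibility for later use: any $\alpha\in\Aut(\chi)$ carries a lift of $\gamma$ at $y$ to a lift of $\gamma$ at $\alpha(y)$, so $\alpha(y)\cdot[\gamma]=\alpha(y\cdot[\gamma])$; that is, deck transformations commute with the monodromy action.

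For (2): since $\chi$ is a local homeomorphism and $X$ is locally path-connected, so is $Y$, hence its connected components are its path-components, so $\pi_0(Y)$ is the set of path-components. Each component $Z$ surjects onto $X$: $\chi(Z)$ is open because $\chi$ is open, and closed because near a boundary point any sheet over an evenly-covered path-connected neighbourhood that meets $Z$ lies entirely in $Z$; as $X$ is connected, $\chi(Z)=X$, so $Z$ meets $\chi^{-1}(x)$ and the map $\chi^{-1}(x)\to\pi_0(Y)$ is onto. Finally, $y,y'\in\chi^{-1}(x)$ lie in the same path-component iff there is a path in $Y$ joining them, which by projecting (resp. lifting) is equivalent to $y'=y\cdot[\gamma]$ for some $[\gamma]$; hence the fibres of $\chi^{-1}(x)\to\pi_0(Y)$ are exactly the $\pi_1(X,x)$-orbits, giving the bijection.

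Then (3)--(5) follow. For (3), the Galois hypothesis means $\Aut(\chi)$ is transitive on $\chi^{-1}(x)$, so there is a deck transformation sending $y$ to $y\cdot g$; on the path-component of $y$ it is unique (a deck transformation is determined by one value on a connected set), and I define $\alpha_y(g)$ to be it, the relation $\alpha_y(gh)=\alpha_y(g)\alpha_y(h)$ then following from the commutation recorded above together with uniqueness (for disconnected $Y$ one argues component by component, the components being permuted transitively by $\Aut(\chi)$). For (4), $\chi_{|Z}:Z\to X$ is a covering map (restrict the evenly-covered neighbourhoods to the sheets lying inside $Z$, which surject onto $X$ by the argument in (2)); the elements $\alpha_z(g)$ preserve $Z$ since $\alpha_z(g)(z)=z\cdot g\in Z$, so they restrict to deck transformations of $\chi_{|Z}$, and the restriction map $\im\alpha_z\to\Aut(\chi_{|Z})$ is an isomorphism — onto because any deck transformation of $\chi_{|Z}$ carries $z$ to some $z\cdot g$ (by (2)) hence equals $\alpha_z(g)|_Z$, injective by the same uniqueness — while transitivity of $\im\alpha_z$ on the orbit $\chi_{|Z}^{-1}(x)$ shows $\chi_{|Z}$ is Galois. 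For (5), with $Y$ path-connected the monodromy action is transitive (one orbit, by (2)), so $\alpha_y$ is surjective; its kernel is the set of $g$ with $y\cdot g=y$, i.e. those whose lift at $y$ closes up, which is precisely $\chi_*\pi_1(Y,y)$; and $\chi_*$ is injective by the homotopy-lifting property, so the sequence is exact, normality of the image being automatic since it is a kernel.

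The only point requiring genuine care is the bookkeeping for a disconnected total space: the uniqueness of a deck transformation prescribed at a single point, on which (3) and (4) rest, is a statement about connected spaces, so one must work path-component by path-component and invoke transitivity of $\Aut(\chi)$ on the components of a Galois cover to reduce to that case. Everything else is a direct application of the path-lifting and homotopy-lifting properties of covering maps, so I do not expect a serious obstacle.
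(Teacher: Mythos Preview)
The paper does not prove this proposition; it is stated in the appendix as standard background and left without proof, followed only by an illustrative example. Your argument is the standard one via unique path-lifting and homotopy-lifting and is correct, so there is nothing to compare against.
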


\begin{exa}Consider the branched covering map $c_n:\mathbb P^{k-1}\to\mathbb P^{k-1}$ defined in example \ref{simpleexamplebranchedcover}. Let $X$ denote the open subset of $\mathbb P^{k-1}$ where none of the homogeneous coordinates $u_1,\dots,u_k$ vanish and let $x$ be the point $[1:\cdots:1]$. $X$ is path-connected and the restriction $c_n:X\to X$ is a Galois unbranched covering map. The group $\pi_1(X,x)$ is generated by the homotopy classes $g_s$ of the loops$$\gamma_s:\left\{\begin{array}{ccl}[0,2\pi]&\longrightarrow&X\\t&\longmapsto&[1:\cdots:1:e^{it}:1:\cdots:1]\end{array}\right.$$for $1\le s\le k$. Note that the loop $\gamma_s$ consists in a turn around a hyperplane of the arrangement. The lift $\tilde\gamma_s$ of $\gamma_s$ satisfying $\tilde\gamma_s(0)=x$ is$$\tilde\gamma_s:\left\{\begin{array}{ccl}[0,2\pi]&\longrightarrow&X\\t&\longmapsto&[1:\cdots:1:e^{i\frac tn}:1:\cdots:1]\end{array}\right.$$so that the morphism $\alpha_x:\pi_1(X,x)\to\Aut(c_n)$ satisfies $\alpha_x(g_s)=\alpha_s$ for $1\le s\le k$. Note that $\alpha_x(g_s)$ depends only on the hyperplane $D_s$ around which the loop turns and the numer of turns, but not the choice of the loop.\end{exa}

\bibliographystyle{plain}
\bibliography{thesis}

\end{document}